\newtheorem{theorem}{Theorem}[section]
\newtheorem{lemma}[theorem]{Lemma}
\newtheorem{proposition}[theorem]{Proposition}
\theoremstyle{definition}
\newtheorem{definition}[theorem]{Definition}
\newtheorem{remark}[theorem]{Remark}
\numberwithin{equation}{section}
\date{}
\def \p {\partial}
\def \O {\varOmega}
\def \e {\lambda}
\def \r {T}
\def \ta {\theta}
\def \d {\cdot}
\def \g {\Phi}
\def \ga {\gamma}
\def \be {\beta}
\def \n {\nabla}
\def \nh {\nabla_h}
\def \la {\Delta}
\def \ma {\mathcal}%
\newcommand{\ie}{i.e.}
\newcommand{\eg}{e.g.}
\newcommand{\et}{\emph{et al.}}
\newcommand{\norm}[1]{\left\lVert#1\right\rVert}
\newcommand{\rt}[1]{\int^r_0\int_{\O}#1dxdydzdt}
\newcommand{\ty}[1]{\int^{\infty}_0\int_{\O}#1dxdydzdt}
\newcommand{\xkh}[1]{\left(#1\right)}
\newcommand{\zkh}[1]{\left[#1\right]}
\newcommand{\dkh}[1]{\left\{#1\right\}}
\newcommand{\ds}[1]{\int^t_0#1ds}
\newcommand{\dto}[1]{\int^{t^*_0}_0#1dt}
\newcommand{\dt}[1]{\int^r_0#1dt}
\newcommand{\dz}[1]{\int^1_{-1}#1dz}
\newcommand{\dk}[1]{\int^z_0#1d\xi}
\newcommand{\mm}[1]{\int_{M}#1dxdy}
\newcommand{\oo}[1]{\int_{\O}#1dxdydz}
\begin{document}
\title[Rigorous justification of the hydrostatic approximation]{The hydrostatic approximation of the Boussinesq equations with rotation in a thin domain}

\thanks{\textit{2010 Mathematics Subject Classification}: 35Q35, 35Q86, 86A05, 86A10}
\thanks{\textit{Keywords}: Boussinesq equations with rotation; Primitive equations; Hydrostatic approximation; Strong convergence}

\author[X. Pu]{Xueke Pu}
\address[X. Pu]{School of Mathematics and Information Science, Guangzhou University, Guangzhou 510006,~China}
\email{puxueke@gmail.com}

\author[W. Zhou]{Wenli Zhou}
\address[W. Zhou]{School of Mathematics and Information Science, Guangzhou University, Guangzhou 510006,~China}
\email{wywlzhou@163.com}

\begin{abstract}
In this paper, we improve the global existence result in \cite{cc2017} slightly. More precisely, the global existence of strong solutions to the primitive equations with only horizontal viscosity and diffusivity is obtained under the assumption of initial data $(v_0,\r_0) \in H^1$ with $\p_z v_0 \in L^4$. Moreover, we prove that the scaled Boussinesq equations with rotation strongly converge to the primitive equations with only horizontal viscosity and diffusivity, in the cases of $H^1$ initial data,~$H^1$ initial data with additional regularity $\p_z v_0 \in L^4$~and $H^2$ initial data, respectively, as the aspect ration parameter $\e$ goes to zero, and the rate of convergence is of the order $O(\e^{{\eta}/2})$ with $\eta=\min\{2,\be-2,\ga-2\}(2<\be,\ga<\infty)$. The convergence result implies a rigorous justification of the hydrostatic approximation.
\end{abstract}

\maketitle\vspace{-5mm}

\section{Introduction}
The primitive equations are considered as the fundamental model in geophysical flows (see, \eg, \cite{wm1986,jp1987,ds1996,am2003,gk2006}).~In large-scale ocean dynamics, an important feature is that the vertical scale of ocean is much smaller than the horizontal scale, which means that we have to use the hydrostatic approximation to simulate the motion of ocean in the vertical direction.~Owing to this fact and the high accuracy of hydrostatic approximation, the three-dimensional viscous primitive equations of ocean dynamics can be formally derived from the three-dimensional Boussinesq equations with rotation (see \cite{rt1992,ct2007}).

The small aspect ratio limit from the Navier-Stokes equations to the primitive equations was studied first by Az\'{e}rad-Guill\'{e}n\cite{pa2001} in a weak sense, then by Li-Titi\cite{lt2019} in a strong sense with error estimates, and finally by Furukawa \et\cite{kf2020} in a strong sense but under relaxing the regularity on the initial condition. ~Subsequently, the strong convergence of solutions of the scaled Navier-Stokes equations to the corresponding ones of the primitive equations with only horizontal viscosity was obtained by Li-Titi-Yuan\cite{yu2022}.~Furthermore, the rigorous justification of the hydrostatic approximation from the scaled Boussinesq equations to the primitive equations with full viscosity and diffusivity was obtained by Pu-Zhou\cite{pz2021}.

From a physical point of view, fluid flow is strongly influenced by effect of stratification (see, \eg, \cite{jp1987,am2003,gk2006}).~An important observation for effect of stratification is that the density of a fluid changes with depth.~In some mathematical studies, considering the hydrodynamic equations with density stratification term can often obtain better results (see, \eg, \cite{ct2012,cc2014,jl2014,es2016,cc2017,es2020}).~These two facts show that density stratification term is of great significance both physically and mathematically.~The rigorous mathematical derivation for the governed equations describing the motion of stable stratified fluid, \ie, the viscous primitive equations with density stratification, is due to the work of Pu-Zhou\cite{wz2021}.~Based on the ideas that follow from Li-Titi\cite{lt2019}, Li-Titi-Yuan\cite{yu2022}, and Pu-Zhou\cite{wz2021}, We study the hydrostatic approximation of the Boussinesq equations with rotation in a thin domain.

\subsection{The scaled Boussinesq equations with rotation in a thin domain}
Let $\O_\e=M\times(-\e,\e)$ be a $\e$-dependent domian, where $M=(0,1)\times(0,1)$.~Here, $\e=H/L$ is called the aspect ratio, measuring the ratio of the vertical and horizontal scales of the motion, which is usually very small.~Say, for large-scale ocean circulation, the ratio~$\e\sim 10^{-3}\ll 1$.

Denote by $\nh=(\p_x,\p_y)$ the horizontal gradient operator.~Then the horizontal Laplacian operator $\la_h$ is given by
\begin{equation*}
  \la_h=\nh \d \nh=\p_{xx}+\p_{yy}.
\end{equation*}Let us consider the anisotropic Boussinesq equations with rotation defined on $\O_\e$
\begin{equation}\label{eq:udn}
\begin{cases}
  \p_t u+(u \d \n)u+\n\pi-\ta\vec{k}+f_0\vec{k} \times v=\mu_h \la_h u+\mu_z \p_{zz}u,\\
  \p_t \ta+u \d \n\ta=\kappa_h\la_h\ta+\kappa_z\p_{zz}\ta,\\
  \n \d u=0,
\end{cases}
\end{equation}where the three dimensional velocity field $u=(v,w)=(v_1,v_2,w)$, the pressure $\pi$ and temperature $\ta$ are the unknowns.~$f_0$ is the Coriolis parameter and $\vec{k}=(0,0,1)$ is unit vector pointing to the $z$-direction.~$\mu_h$ and $\mu_z$ represent the horizontal and vertical viscosity coefficients, while $\kappa_h$~and~$\kappa_z$ represent the horizontal and vertical heat diffusion coefficients, respectively.

In fact,~the anisotropic Boussinesq equations with rotation (\ref{eq:udn}) have an elementary exact solution
\begin{equation*}
(u,\ta,\pi)=\xkh{0,\bar{\ta}(z),\bar{\pi}(z)}=\xkh{0,{N^2}z,\frac{N^2}{2}z^2},
\end{equation*}satisfying the hydrostatic approximation
\begin{equation*}
  \frac{d\bar{\pi}(z)}{dz}-\bar{\ta}(z)=0.
\end{equation*}Here, $N>0$ is called the buoyancy or Brunt-V\"ais\"al\"a frequency and denotes the strengthen of stable stratification, which implies that the density of a fluid decreases with height and lighter fluid is above heavier fluid. Assume that
\begin{gather*}
  p(x,y,z,t)=\pi(x,y,z,t)-\bar{\pi}(z),\\
  \r(x,y,z,t)=\ta(x,y,z,t)-\bar{\ta}(z).
\end{gather*}Then the anisotropic Boussinesq equations with rotation (\ref{eq:udn}) become
\begin{equation}\label{eq:muh}
\begin{cases}
  \p_t v+(v \d \nh)v+w \p_z v+\nh p+f_0\vec{k} \times v=\mu_h\la_h v+\mu_z\p_{zz}v,\\
  \p_t w+v \d \nh w+w\p_z w+\p_z p-\r=\mu_h\la_h w+\mu_z\p_{zz}w,\\
  \p_t \r+v \d \nh\r+w\p_z\r+N^2w
  =\kappa_h\la_h\r+\kappa_z\p_{zz}\r,\\
  \nh \d v+\p_z w=0.
\end{cases}
\end{equation}

Firstly, we transform the anisotropic Boussinesq equations with rotation (\ref{eq:muh}), defined on the $\e$-dependent domain $\O_\e$, to the scaled Boussinesq equations with rotation defined on a fixed domain.~To this end, we introduce the following new unknowns
\begin{gather*}
  u_{\e}=(v_\e,w_\e),~v_\e(x,y,z,t)=v(x,y,\e z,t),\\
  w_\e(x,y,z,t)=\frac{1}{\e}w(x,y,\e z,t),~p_\e(x,y,z,t)=p(x,y,\e z,t),\\
  \r_\e(x,y,z,t)=\e\r(x,y,\e z,t),~\bar{\pi}_\e(z)=\bar{\pi}(\e z),~\bar{\ta}_\e(z)=\e^2\bar{\ta}(\e z),
\end{gather*}for any $(x,y,z)\in\O=:M\times(-1,1)$ and for any $t\in(0,\infty)$.~Then the last two scalings allow us to write the pressure and temperature non-dimensionally as
\begin{equation*}
  \bar{\pi}_\e(z)+p_\e(x,y,z,t)=\bar{\pi}(\e z)+p(x,y,\e z,t)=\pi(x,y,\e z,t)
\end{equation*}and
\begin{equation*}
  \bar{\ta}_\e(z)+\e\r_\e(x,y,z,t)=\e^2(\bar{\ta}(\e z)+\r(x,y,\e z,t))=\e^2\ta(x,y,\e z,t),
\end{equation*}respectively.

Suppose that $\mu_h=\kappa_h=1$, $\mu_z=\e^{\be}$, and $\kappa_z=\e^{\ga}$, with $2 < \be,\ga<\infty$.~Under these scalings, the anisotropic Boussinesq equations with rotation (\ref{eq:muh}) defined on $\O_\e$ can be written as the following scaled Boussinesq equations with rotation
\begin{equation}\label{eq:bve}
\begin{cases}
  \p_t v_\e+(v_\e \d \nh)v_\e+w_\e \p_z v_\e+\nh p_\e+f_0\vec{k} \times v_\e=\la_h v_\e+\e^{\be-2}\p_{zz}v_\e,\\
  \e(\p_t w_\e+v_\e \d \nh w_\e+w_\e \p_z w_\e)+\frac{1}{\e}(\p_z p_\e-\r_\e)=\e\la_h w_\e+\e^{\be-1}\p_{zz}w_\e,\\
  \frac{1}{\e}\xkh{\p_t \r_\e+v_\e \d \nh \r_\e+w_\e \p_z \r_\e}+\e N^2 w_\e=\frac{1}{\e}\la_h\r_\e+\e^{\ga-3}\p_{zz}\r_\e,\\
  \nh \d v_\e+\p_z w_\e=0,
\end{cases}
\end{equation}defined on the fixed domain $\O$.

Set $\e^2 N^2=1$, \ie, $N \sim 1/\e$, which means that the stratification effect is very strong as the aspect ratio $\e$ tends to zero.~In such a case, the scaled Boussinesq equations with rotation (\ref{eq:bve}) can be rewritten as
\begin{equation}\label{eq:ptv}
\begin{cases}
  \p_t v_\e-\la_h v_\e-\e^{\be-2}\p_{zz}v_\e+(v_\e \d \nh)v_\e+w_\e \p_z v_\e+\nh p_\e+f_0\vec{k} \times v_\e=0,\\
  \e^2\xkh{\p_tw_\e-\la_h w_\e-\e^{\be-2}\p_{zz}w_\e+v_\e \d \nh w_\e+w_\e \p_z w_\e}+\p_zp_\e-\r_\e=0,\\
  \p_t \r_\e-\la_h \r_\e-\e^{\ga-2}\p_{zz}\r_\e+v_\e \d \nh \r_\e+w_\e \p_z \r_\e+w_\e=0,\\
  \nh \d v_\e+\p_z w_\e=0.
\end{cases}
\end{equation}

Next, we supply the scaled Boussinesq equations with rotation (\ref{eq:ptv}) with the following boundary and initial conditions
\begin{gather}
  v_\e,w_\e,p_\e~\textnormal{and}~\r_\e~\textnormal{are periodic in}~x,y,z, \label{ga:are}\\
  (v_\e,w_\e,\r_\e)|_{t=0}=(v_0,w_0,\r_0), \label{ga:vew}
\end{gather}where $(v_0,w_0,\r_0)$ is given.~Moreover, we also equip the system (\ref{eq:ptv}) with the following symmetry condition
\begin{equation}\label{eq:eve}
  v_\e,w_\e,p_\e~\textnormal{and}~\r_\e~\textnormal{are even,~odd,~even and odd with respect to}~z,~\textnormal{respectively}.
\end{equation}Noting that the above symmetry condition is preserved by the scaled Boussinesq equations with rotation (\ref{eq:ptv}), \ie, it holds provided that the initial data satisfies this symmetry condition.~Due to this fact, throughout this paper, we always suppose that
\begin{equation}\label{eq:xyz}
  v_0,w_0~\textnormal{and}~\r_0~\textnormal{are periodic in}~x,y,z,~\textnormal{and are even,~odd~and odd in}~z,~\textnormal{respectively}.
\end{equation}

In this paper, we will not distinguish in notation between spaces of scalar and vector-valued functions.~Namely, we will use the same notation to denote both a space itself and its finite product spaces.~For simplicity, we denote by notation $\norm{\d}_p$ the $L^p(\O)$ norm.

The weak solutions of the scaled Boussinesq equations with rotation (\ref{eq:ptv}) are defined as follows.
\begin{definition}
Given $(u_0,\r_0)=(v_0,w_0,\r_0) \in L^2(\O)$, with $\n \d u_0=0$.~We say that a space periodic function $(v_\e,w_\e,\r_\e)$ is a weak solution of the system (\ref{eq:ptv}), subject to boundary and initial conditions (\ref{ga:are})-(\ref{ga:vew}) and symmetry condition~(\ref{eq:eve}), if\\
(i) $(v_\e,w_\e,\r_\e) \in C([0,\infty);L^2(\O)) \cap L^2_{loc}([0,\infty);H^1(\O))$ and\\
(ii) $(v_\e,w_\e,\r_\e)$ satisfies the following integral equality
\begin{flalign*}
  &\ty{\bigg\{\zkh{-v_\e \d \p_t \varphi_h-\e^2 w_\e \p_t\varphi_3-\r_\e \p_t \psi-\r_\e \varphi_3+w_\e \psi+f_0(\vec{k} \times v_\e) \d \varphi_h}\\
  &\qquad\quad+\zkh{\nh v_\e:\nh \varphi_h+\e^{\be-2}(\p_z v_\e) \d \p_z\varphi_h+\e^2\nh w_\e \d \nh \varphi_3}\\
  &\qquad\quad+\zkh{\e^\be(\p_z w_\e)\p_z\varphi_3+\nh \r_\e \d \nh \psi+\e^{\ga-2}(\p_z\r_\e)\p_z\psi}\\
  &\qquad\quad+\zkh{(u_\e \d \n)v_\e \d \varphi_h+\e^2(u_\e \d \n w_\e)\varphi_3+(u_\e \d \n \r_\e)\psi}\bigg\}}\\
  &\qquad=\oo{\xkh{v_0 \d \varphi_h(0)+\e^2w_0\varphi_3(0)+\r_0\psi(0)}},
\end{flalign*}for any spatially periodic function $(\varphi,\psi)=(\varphi_h,\varphi_3,\psi)$, with~$\varphi_h=(\varphi_1,\varphi_2)$, such that $\n \d \varphi=0$ and $(\varphi,\psi) \in C^{\infty}_c(\overline{\O}\times[0,\infty))$.
\end{definition}

\begin{remark}\label{re:ws}
The proof of the existence of weak solutions to the scaled Boussinesq equations with rotation (\ref{eq:ptv}) follows from the
similar argument in Lions-Temam-Wang \cite[Part IV]{rt1992}.~Specifically, for any initial data
$(u_0,\r_0)=(v_0,w_0,\r_0) \in L^2(\O)$, with $\n \d u_0=0$, we can prove that there exists a global weak solution
$(v_\e,w_\e,\r_\e)$ of the scaled Boussinesq equations with rotation (\ref{eq:ptv}), subject to boundary and initial conditions
(\ref{ga:are})-(\ref{ga:vew}) and symmetry condition (\ref{eq:eve}).~Moreover, by the similar argument as Lions-Temam-Wang
\cite[Part IV]{rt1992}, we can also show that it has a unique local strong solution $(v_\e,w_\e,\r_\e)$
for initial data $(u_0,\r_0)=(v_0,w_0,\r_0) \in H^1(\O)$, with $\n \d u_0=0$.
\end{remark}

\begin{remark}\label{re:lh}
Similar to the theory of three-dimensional Navier-Stokes equations, \eg, see Temam\cite[Ch.III, Remark 4.1]{rt1977} and Robinson \et\cite[Theorem 4.6]{jc2016}, we can prove that the weak solution $(v_\e,w_\e,\r_\e)$ satisfies the following energy inequality
\begin{flalign}
  &\frac{1}{2}\xkh{\norm{v_\e(t)}^2_2+\e^2\norm{w_\e(t)}^2_2+\norm{\r_\e(t)}^2_2}\nonumber\\
  &\qquad\quad+\ds{\xkh{\norm{\nh v_\e}^2_2+\e^{\be-2}\norm{\p_z v_\e}^2_2+\e^2\norm{\nh w_\e}^2_2}}\nonumber\\
  &\qquad\quad+\ds{\xkh{\e^\be\norm{\p_z w_\e}^2_2+\norm{\nh \r_\e}^2_2+\e^{\ga-2}\norm{\p_z \r_\e}^2_2}}\nonumber\\
  &\qquad\leq\frac{1}{2}\xkh{\norm{v_0}^2_2+\e^2\norm{w_0}^2_2+\norm{\r_0}^2_2}.\label{fl:vwt}
\end{flalign}for a.e.~$t \in [0,\infty)$, as long as the weak solution $(v_\e,w_\e,\r_\e)$ is obtained by Galerkin method.
\end{remark}

\subsection{The limiting system of the scaled Boussinesq equations with rotation}
Now we discuss the limiting system of the scaled Boussinesq equations with rotation (\ref{eq:ptv}).

When $2<\be,\ga<\infty$, taking the limit $\e \rightarrow 0$ in system (\ref{eq:ptv}), then this system formally converges to the following primitive equations with only horizontal viscosity and diffusivity
\begin{equation}\label{eq:ptl}
\begin{cases}
  \p_t v-\la_h v+(v \d \nh)v+w \p_z v+\nh p+f_0\vec{k} \times v=0,\\
  \p_z p-\r=0,\\
  \p_t \r-\la_h \r+v \d \nh \r+w \p_z \r+w=0,\\
  \nh \d v+\p_z w=0.
\end{cases}
\end{equation}

The term $w$ in the third equation of system (\ref{eq:ptl}) is called the density stratification term, providing additional dissipation for this system.~Moreover, we supply the limiting system (\ref{eq:ptl}) with the same boundary and initial conditions (\ref{ga:are})-(\ref{ga:vew}) and symmetry condition (\ref{eq:eve}) as the system (\ref{eq:ptv}). In studying the well-posedness of system (\ref{eq:ptl}), we observe that it is not necessary to give the initial condition for vertical velocity $w$, since there is no dynamic equation for vertical velocity in the system. So we say that the system (\ref{eq:ptl}) satisfies the initial condition (\ref{ga:vew}) just for convenience. Note that the initial value $w_0$ for vertical velocity $w_\e$ is uniquely determined by the divergence-free condition and symmetry condition (\ref{eq:eve}). Hence it can be represented as
\begin{equation}\label{eq:yxi}
  w_0(x,y,z)=-\dk{\nh \d v_0(x,y,\xi)},
\end{equation}for any~$(x,y) \in M$~and~$z \in (-1,1)$. Therefore, only the initial conditions of horizontal velocity and temperature are given throughout the paper.

We remark that the limiting system of the scaled Boussinesq equations with rotation (\ref{eq:ptv}) is the primitive equations with full viscosity and diffusivity when $\be=\ga=2$. This case was studied by the authors (see \cite{wz2021}). In consequence, the aim of this paper is to prove that the scaled Boussinesq equations with rotation (\ref{eq:ptv}) strongly converge to the primitive equations with only horizontal viscosity and diffusivity (\ref{eq:ptl}), in the cases of $H^1$ initial data,~$H^1$ initial data with additional regularity $\p_z v_0 \in L^4$ and $H^2$ initial data, respectively, as the aspect ration parameter tends to zero. These convergence results are briefly described as follows.
\begin{itemize}
  \item For $H^1$ initial data, the system (\ref{eq:ptl}) corresponding to (\ref{ga:are})-(\ref{eq:eve}) has a unique local strong solution $(v,\r)$ (see \cite{cc2017}).~Based on this local well-posedness result and Remark \ref{re:ws}, we obtain the local strong convergence theorem (see Theorem \ref{th:VTC}).
  \item For $H^1$ initial data with additional regularity $\p_z v_0 \in L^4$, the global existence of strong solutions to the system (\ref{eq:ptl}) with (\ref{ga:are})-(\ref{eq:eve}) is proved (see Theorem \ref{th:J2t}). Compared with \cite{cc2017}, the condition $(v_0,\r_0) \in L^\infty$ is removed by establishing $L^\infty_t L^4_x$ estimate on the vertical derivative of horizontal velocity that does not depend on $\norm{(v_0,\r_0)}_\infty$.~Consequently, the improved result and Remark \ref{re:ws} yield the global strong convergence theorem (see Theorem \ref{th:J3t}).
  \item For $H^2$ initial data, there exists a unique global strong solution $(v,\r)$ to the system (\ref{eq:ptl}) subject to (\ref{ga:are})-(\ref{eq:eve}) (see \cite{es2016}).~According to the energy estimate on the global strong solutions and Remark \ref{re:ws}, we establish the corresponding global strong convergence result (see Theorem \ref{th:J7t}).
\end{itemize}

As can be seen from above, the well-posedness results of primitive equations with only horizontal viscosity and diffusivity (\ref{eq:ptv}) will play an important role in proving that the system (\ref{eq:ptv}) strongly converges to the system (\ref{eq:ptl}) as the aspect ration parameter tends to zero.~In order to make full use of these known well-posedness results, we must construct the primitive equations with density stratification.~The way to construct it is to look for a suitable exact solution of the system (\ref{eq:udn}), \ie,
\begin{equation*}
(u,\ta,\pi)=\xkh{0,\bar{\ta}(z),\bar{\pi}(z)}=\xkh{0,{N^2}z,\frac{N^2}{2}z^2},
\end{equation*}which satisfies the hydrostatic approximation
\begin{equation*}
  \frac{d\bar{\pi}(z)}{dz}-\bar{\ta}(z)=0.
\end{equation*}In addition to these well-posedness results,~more results on the case of partial dissipation can be found in the work of Cao-Titi\cite{ct2012}, Fang-Han\cite{dy2020}, Li-Yuan\cite{li2022}, and Cao-Li-Titi\cite{cc2014,jl2014,es2020}.

Some other results for the primitive equations are as follows.~The global existence of weak solutions of the primitive equations with full viscosity and diffusivity was first given by Lions-Temam-Wang\cite{rt1992,jl1992,sw1995},~but the question of uniqueness to this mathematical model is still unclear.~Only in some special cases are known results (see \cite{db2003,tt2010,ik2014,jl2017,ju2017}).~For arbitrarily large initial data belonging to $H^1$, the global existence of strong solutions of the full primitive equations with Neumann boundary conditions was obtained by Cao-Titi\cite{ct2007}.~In the case of mixed Dirichlet and Neumann boundary conditions, this result was also proved by Kukavica-Ziane\cite{ik2007,mz2007}.~Considering the same boundary conditions, the existence of global strong solutions of the primitive equations without temperature was established by Hieber-Kashiwabara\cite{mh2016} for $L^p$ initial data, and later by Giga \et \cite{ym2020} for initial data in anisotropic $L^p$ space.~In addition,~the well-posedness result corresponding to the primitive equations without temperature in $L^p$ space can be extended to the full primitive equations, see Hieber \et \cite{ah2016}.

The inviscid primitive equations without temperature is called the hydrostatic Euler equations.~Kukavica \et \cite{ku2016} show that the solutions of the primitive equations converge to the solutions of the hydrostatic Euler equations,~as viscosity coefficient goes to zero.~The inviscid primitive equations with or without rotation is known to be ill-posed in Sobolev spaces, and its smooth solutions may develop singularity in finite time, see Renardy\cite{mr2009}, Han-Kwan and Nguyen\cite{dh2016}, Ibrahim-Lin-Titi\cite{si2021}, Wong\cite{tk2015}, and Cao \et \cite{cc2015}.~However, the local well-posedness of the inviscid primitive equations was established by Kukavica \et \cite{ku2011} for initial data in the space of analytic function, in which the maximal existence time of the analytical solutions depends on the rate of rotation $|f_0|$.~Subsequently, this local well-posedness result was improved by Ghoul \et \cite{te2022}, and then the long time existence of solutions was obtained.~For more results on the inviscid primitive equations, we refer to the work of Brenier\cite{yb1999}, Masmoudi-Wong\cite{nt2012}, and Kukavica \et \cite{nm2014}.

The rest of this paper is organized as follows.~Our main results are stated in Section 2.~The strong convergence results of $H^1$ initial data,~$H^1$ initial data with additional regularity,~and $H^2$ initial data are presented in Section 3,~section 4,~and Section 5,~respectively.

\section{Main results}
Now we are to state the main results of this paper.~Assume that initial data $(v_0,\r_0) \in H^1(\O)$.~Then it deduces from (\ref{eq:yxi}) that $(v_0,w_0,\r_0) \in L^2(\O)$, which implies that the system (\ref{eq:ptv}) subject to (\ref{ga:are})-(\ref{eq:eve}) has a global weak solution $(v_\e,w_\e,\r_\e)$ by Remark \ref{re:ws}, and the system (\ref{eq:ptl}) exists a unique local strong solution $(v,\r)$ (see \cite{cc2017}). Denote by $t^*_0$ the maximal existence time of the local strong solution $(v,\r)$ to the system (\ref{eq:ptl}). For this case, we have the following strong convergence theorem.

\begin{theorem}\label{th:VTC}
Given a periodic function pair $(v_0,T_0) \in H^1(\O)$ with $\dz{\nh \d v_0}=0$. Suppose that $(v_\e,w_\e,\r_\e)$ is a global weak solution of the system (\ref{eq:ptv}), satisfying the energy inequality (\ref{fl:vwt}), and that $(v,\r)$ is the unique local strong solution of the system (\ref{eq:ptl}), with the same boundary and initial conditions (\ref{ga:are})-(\ref{ga:vew}) and symmetry condition (\ref{eq:eve}). Let
\begin{equation*}
  (V_\e,W_\e,\g_\e)=(v_\e-v,w_\e-w,\r_\e-\r).
\end{equation*}Then the following estimate holds
\begin{flalign*}
  &\sup_{0 \leq t < t^*_0}\xkh{\norm{(V_\e,\e W_\e,\g_\e)}^2_2}(t)+\dto{\xkh{\norm{\nh V_\e}^2_2+\e^{\be-2}\norm{\p_z V_\e}^2_2}}\\
  &\qquad\quad+\dto{\xkh{\e^2\norm{\nh W_\e}^2_2+\norm{\nh \g_\e}^2_2+\e^\be\norm{\p_z W_\e}^2_2+\e^{\ga-2}\norm{\p_z \g_\e}^2_2}}\\
  &\qquad\leq C\e^\eta(t^*_0+1)e^{C(t^*_0+1)}\zkh{1+\xkh{\norm{v_0}^2_2+\norm{w_0}^2_2+\norm{\r_0}^2_2}^2},
\end{flalign*}where $\eta=\min\{2,\be-2,\ga-2\}$ with $2<\be,\ga<\infty$, and $C$ is a positive constant that does not depend on $\e$. As a result, we have the following strong convergences
\begin{gather*}
  (v_\e,\e w_\e,\r_\e) \rightarrow (v,0,\r),~in~L^{\infty}\xkh{[0,t^*_0);L^2(\O)},\\
  \xkh{\nh v_\e,\e^{(\be-2)/2}\p_z v_\e,\e \nh w_\e,w_\e} \rightarrow (\nh v,0,0,w),~in~L^2\xkh{[0,t^*_0);L^2(\O)},\\
  \xkh{\nh \r_\e,\e^{\be/2}\p_z w_\e,\e^{(\ga-2)/2}\p_z \r_\e} \rightarrow (\nh \r,0,0),~in~L^2\xkh{[0,t^*_0);L^2(\O)},
\end{gather*}and the rate of convergence is of the order $O(\e^{\eta/2})$.
\end{theorem}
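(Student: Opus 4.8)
The plan is to control the error $(V_\e,W_\e,\g_\e)=(v_\e-v,w_\e-w,\r_\e-\r)$ in $L^2$ by a relative-energy (weak--strong) argument. Because $(v_\e,w_\e,\r_\e)$ is only a weak solution obeying the energy inequality (\ref{fl:vwt}), I would not test the difference equations directly; rather I would combine (\ref{fl:vwt}), the energy identity of the smooth strong solution $(v,w,\r)$, and the weak formulation of (\ref{eq:ptv}) tested against the admissible test function $(v,w,\r)$. Formally this is the same as subtracting (\ref{eq:ptl}) from (\ref{eq:ptv}), writing $v_\e=V_\e+v$, $w_\e=W_\e+w$, $\r_\e=\g_\e+\r$, $P_\e=p_\e-p$, and testing the resulting error system
\begin{align*}
  &\p_t V_\e-\la_h V_\e-\e^{\be-2}\p_{zz}V_\e+\nh P_\e+f_0\vec{k}\times V_\e+\ma{N}_1=\e^{\be-2}\p_{zz}v,\\
  &\e^2\xkh{\p_t W_\e-\la_h W_\e-\e^{\be-2}\p_{zz}W_\e}+\e^2\ma{N}_2+\p_z P_\e-\g_\e=-\e^2R,\\
  &\p_t \g_\e-\la_h \g_\e-\e^{\ga-2}\p_{zz}\g_\e+W_\e+\ma{N}_3=\e^{\ga-2}\p_{zz}\r,\\
  &\nh\d V_\e+\p_z W_\e=0
\end{align*}
against $(V_\e,W_\e,\g_\e)$, where $\ma{N}_1,\ma{N}_2,\ma{N}_3$ gather the quadratic interactions and $R=\p_t w-\la_h w-\e^{\be-2}\p_{zz}w$ gathers the linear vertical terms of the known solution. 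The decisive structural fact is that both systems carry the same data, so $(V_\e,W_\e,\g_\e)|_{t=0}=0$.

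Adding the three tested equations, many contributions disappear. The pressure drops out, $\oo{\nh P_\e\d V_\e}+\oo{\p_z P_\e W_\e}=-\oo{P_\e(\nh\d V_\e+\p_z W_\e)}=0$; the Coriolis term gives $\oo{f_0(\vec{k}\times V_\e)\d V_\e}=0$; and the linear coupling cancels, $-\oo{\g_\e W_\e}+\oo{W_\e\g_\e}=0$. Expanding $\ma{N}_1,\ma{N}_3$ and using the divergence-free conditions, every transport term whose advecting field is $(V_\e,W_\e)$ or $(v,w)$ integrates to zero. What remains, after integrating the anisotropic second derivatives by parts, is
\begin{align*}
  &\frac{1}{2}\frac{d}{dt}\xkh{\norm{V_\e}^2_2+\e^2\norm{W_\e}^2_2+\norm{\g_\e}^2_2}+\norm{\nh V_\e}^2_2+\e^{\be-2}\norm{\p_z V_\e}^2_2+\e^2\norm{\nh W_\e}^2_2\\
  &\qquad+\e^\be\norm{\p_z W_\e}^2_2+\norm{\nh \g_\e}^2_2+\e^{\ga-2}\norm{\p_z \g_\e}^2_2\le\ma{B}+\ma{E},
\end{align*}
where $\ma{B}$ collects $-\oo{(V_\e\d\nh)v\d V_\e}-\oo{W_\e\p_z v\d V_\e}-\oo{(V_\e\d\nh\r)\g_\e}-\oo{W_\e\p_z\r\,\g_\e}$ and the $\e^2$-weighted cross terms surviving from $\ma{N}_2$, while $\ma{E}$ collects the source terms $-\e^{\be-2}\oo{\p_z v\d\p_z V_\e}$, $-\e^{\ga-2}\oo{\p_z\r\,\p_z\g_\e}$ and $-\e^2\oo{R\,W_\e}$.

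The core of the argument is the treatment of $\ma{B}$. The two terms $\oo{(V_\e\d\nh)v\d V_\e}$ and $\oo{(V_\e\d\nh\r)\g_\e}$ are standard: H\"older and a Ladyzhenskaya/Gagliardo--Nirenberg inequality bound them by expressions such as $C\norm{\nh v}_2\norm{V_\e}_2\norm{\nh V_\e}_2$, which Young's inequality splits into a small multiple of the dissipation plus $C(t)(\norm{V_\e}_2^2+\norm{\g_\e}_2^2)$. The delicate terms are $\oo{W_\e\p_z v\d V_\e}$ and $\oo{W_\e\p_z\r\,\g_\e}$: here I would insert the representation $W_\e=-\dk{\nh\d V_\e}$ enforced by the divergence-free and symmetry conditions, and invoke the anisotropic Cao--Titi type inequality, which bounds a triple product of $W_\e$ with two functions $g,h$ by $C\norm{\nh V_\e}_2\norm{g}_2^{1/2}(\norm{g}_2+\norm{\nh g}_2)^{1/2}\norm{h}_2^{1/2}(\norm{h}_2+\norm{\nh h}_2)^{1/2}$; after Young this again leaves absorbable horizontal gradients and a factor $C(t)$ built from the strong-solution norms. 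I expect this to be the main obstacle, because in the $H^1$ setting only $\p_z v,\p_z\r\in L^\infty_tL^2$ is available with no additional vertical smoothness, so the coupling through $W_\e$ must be closed using solely the horizontal dissipation and the finiteness of the local strong-solution norms furnished by \cite{cc2017}; the $\e^2$-weighted cross terms from $\ma{N}_2$ are handled identically, each carrying an explicit $\e^2$. Finally $\ma{E}$ is bounded by $\tfrac14$ of the dissipation plus $C\e^{\eta}$ times time-integrable functions of the known solution, with $\eta=\min\{2,\be-2,\ga-2\}$ arising as the smallest surviving power.

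It remains to close by Gr\"onwall's inequality. Absorbing all gradient contributions into the left-hand dissipation yields $\frac{d}{dt}\ma{G}_\e(t)+\tfrac12\ma{D}_\e(t)\le C(t)\ma{G}_\e(t)+C\e^{\eta}h(t)$, where $\ma{G}_\e=\norm{V_\e}^2_2+\e^2\norm{W_\e}^2_2+\norm{\g_\e}^2_2$, $\ma{D}_\e$ is the full dissipation, and $C(t),h(t)$ are $L^1(0,t^*_0)$ functions of the fixed solutions. Since $\ma{G}_\e(0)=0$, integrating and using (\ref{fl:vwt}) to bound the weak-solution norms (this produces the factor $1+(\norm{v_0}^2_2+\norm{w_0}^2_2+\norm{\r_0}^2_2)^2$) gives the asserted estimate with prefactor $\e^{\eta}(t^*_0+1)e^{C(t^*_0+1)}$. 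The three strong convergences then follow by letting $\e\to0$; in particular $w_\e\to w$ in $L^2([0,t^*_0);L^2(\O))$ comes not from the $\e$-weighted energy but from $\norm{W_\e}_2\le C\norm{\nh V_\e}_2$, whose $L^2_t$ norm is $O(\e^{\eta/2})$.
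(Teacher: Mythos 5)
Your proposal is correct and is essentially the paper's own argument: Propositions \ref{po:vew} and \ref{po:eta} carry out precisely this weak--strong relative energy scheme (energy inequality for the weak solution, energy identity for the strong one, cross terms obtained by testing the weak formulation against $(v,w,\r)$ and the limiting system against the weak solution), estimate the surviving trilinear terms with the anisotropic inequality of Lemma \ref{le:phi} together with the representation $W_\e=-\dk{\nh\d V_\e}$, and close with the Gr\"onwall inequality from $(V_\e,W_\e,\g_\e)|_{t=0}=0$, yielding the rate $O(\e^{\eta/2})$ and the same deduction of $w_\e\to w$ from the control of $\nh V_\e$. The one caveat is that the terms you label ``standard,'' e.g.\ $\oo{(V_\e\d\nh)v\d V_\e}$, cannot actually be closed by plain three-dimensional H\"older/Gagliardo--Nirenberg---that route brings in the full gradient $\n V_\e$, whose vertical part carries only the degenerate dissipation $\e^{\be-2}\norm{\p_z V_\e}^2_2$---so the paper treats these terms with Lemma \ref{le:phi} as well; your ``delicate-term'' machinery is needed for every trilinear term, not only those containing $W_\e$.
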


The authors in \cite{cc2017} obtain the global strong solutions of system (\ref{eq:ptl}), provided that initial data $(v_0,T_0) \in H^1(\O)$ has the additional regularity that $\p_z v_0 \in L^q(\O)$ with $q \in (2,\infty)$ and $(v_0,\r_0) \in L^\infty(\O)$.~We improve this result slightly and then give the following theorem.
\begin{theorem}\label{th:J2t}
Assume that $(v_0,\r_0) \in H^1(\O)$ with $\dz{\nh \d v_0}=0$, and that $\p_z v_0 \in L^4(\O)$. Then there exists a unique global strong solution $(v,\r)$ to the system (\ref{eq:ptl}) subject to boundary and initial conditions (\ref{ga:are})-(\ref{ga:vew}) and symmetry condition (\ref{eq:eve}) such that the following energy estimate holds
\begin{flalign*}
  &\sup_{0 \leq s \leq t}\xkh{\norm{(v,\r)}^2_{H^1(\O)}}(s)+\ds{\norm{\nh v}^2_{H^1(\O)}}\\
  &\qquad+\ds{\xkh{{\norm{\nh \r}^2_{H^1(\O)}}+\norm{(\p_t v,\p_t \r)}^2_{L^2(\O)}}} \leq \ma{J}_2(t),
\end{flalign*}for any $t \in [0,\infty)$, where $\ma{J}_2(t)$ is a nonnegative continuously increasing function defined on $[0,\infty)$.
\end{theorem}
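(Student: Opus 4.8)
The plan is to prove the bound as an \emph{a priori} estimate and then close the argument by continuation: Remark~\ref{re:ws} already supplies a unique local strong solution for $H^1$ data, so it suffices to show that the $H^1$ norm of $(v,\r)$ (together with the auxiliary quantities in the statement) stays finite on every finite interval, whence no blow-up occurs and the solution is global. Throughout, the vertical velocity is the slaved quantity $w(x,y,z,t)=-\dk{\nh\d v(x,y,\xi,t)}$, so every bound on $w$ and $\p_z w$ is read off from horizontal derivatives of $v$.

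First I would record the basic $L^2$ identity. Testing the first equation of (\ref{eq:ptl}) with $v$ and the third with $\r$, the transport terms vanish after integration by $\n\d u=0$ and the Coriolis term is skew. The pressure and buoyancy couplings cancel against each other: using $\p_z p=\r$ and incompressibility, $\oo{\nh p\d v}+\oo{w\r}=-\oo{p(\nh\d v+\p_z w)}=0$. This yields $L^\infty$-in-time control of $\norm{(v,\r)}_2$ and finiteness of $\ds{\xkh{\norm{\nh v}^2_2+\norm{\nh\r}^2_2}}$; these time-integrable gradient norms are the engine for the next step.

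The heart of the proof, and the point where the hypothesis $\p_z v_0\in L^4$ (in place of $L^\infty$ data) enters, is an $L^\infty_tL^4_x$ bound on $\p_z v$. Differentiating the $v$-equation in $z$ and writing $u:=\p_z v$, the relations $\p_z w=-\nh\d v$ and $\nh\p_z p=\nh\r$ give
\begin{equation*}
  \p_t u-\la_h u+(u\d\nh)v+(v\d\nh)u-(\nh\d v)u+w\p_z u+\nh\r+f_0\vec{k}\times u=0.
\end{equation*}
Testing with $\abs{u}^2u$, the transport pair $(v\d\nh)u+w\p_z u$ integrates to zero and the Coriolis term drops, while the horizontal dissipation produces a coercive contribution controlling $\norm{\nh\abs{u}^2}^2_2$. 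It then remains to absorb the stretching terms $(u\d\nh)v\d\abs{u}^2u$ and $(\nh\d v)\abs{u}^4$ and the buoyancy term $\nh\r\d\abs{u}^2u$. I would estimate these slicewise in $z$ by the two-dimensional Ladyzhenskaya--Gagliardo--Nirenberg inequality, splitting each product with Young's inequality so that the factor carrying $\nh\abs{u}^2$ is absorbed into the dissipation, leaving a coefficient of $\norm{u}^4_4$ that is a \emph{time-integrable} multiple built from $\norm{\nh v}^2_2$ (and, for the buoyancy term, a norm of $\r$). A Gronwall argument would then bound $\sup_{[0,t]}\norm{\p_z v}^4_4$ in terms of $\norm{\p_z v_0}^4_4$ and $\ds{\norm{\nh v}^2_2}$, with no appearance of $\norm{(v_0,\r_0)}_\infty$; this is exactly the improvement over \cite{cc2017}.

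The main obstacle is the buoyancy coupling $\nh\r\d\abs{u}^2u$: unlike the stretching terms it cannot be closed by the velocity dissipation alone, and the anisotropic estimate for it leaves behind a norm of $\r$ stronger than $L^2_tL^2_x$. I would resolve this by running the $L^4$ estimate for $\p_z v$ \emph{simultaneously} with an $H^1$ estimate for $\r$ (testing the $\r$-equation with $-\la\r$), so that the two dissipations jointly absorb the cross term after a careful Young splitting and Gronwall is applied to the combined quantity. Once $\p_z v\in L^\infty_tL^4_x$ and $\r\in L^\infty_tH^1$ are in hand, the remaining estimates are routine: testing the $v$-equation with $-\la v$ and the $\r$-equation with $-\la\r$ gives the dissipations $\norm{\la_h v}^2_2+\norm{\nh\p_z v}^2_2$ and the analogue for $\r$, and the dangerous terms $w\p_z v$, $w\p_z\r$ are dominated using the $L^4$ control of $\p_z v$ together with the $L^2$ energy. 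This produces the $L^\infty_tH^1$ bounds on $(v,\r)$ and the $L^2_t$ bounds on $\nh v,\nh\r$ in $H^1$. The time-derivative bounds $\p_t v,\p_t\r\in L^2_tL^2_x$ follow by reading the equations and inserting the spatial estimates, and uniqueness follows from a standard $L^2$ energy estimate on the difference of two solutions. Collecting all bounds into a single continuously increasing function $\ma{J}_2(t)$ yields the theorem.
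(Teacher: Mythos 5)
Your step (3) reproduces the paper's core computation: equation (\ref{fl:pzv}) and the $L^4$ identity (\ref{fl:v44}) obtained by testing with $\abs{\p_z v}^2\p_z v$ are exactly what the paper does, and you correctly identify that this estimate must avoid $\norm{(v_0,\r_0)}_\infty$. The gap is in steps (4)--(5), which you call routine. (a) The $H^1$ estimate does \emph{not} close with only $\p_z v\in L^\infty_tL^4_x$ and the $L^2$ energy: for the convective term $\oo{\zkh{(v\d\nh)v}\d\la_h v}$ the anisotropic inequality (Lemma \ref{le:phi}) leaves the dissipation $\norm{\n\nh v}^2_2$ multiplied by a factor of the unknown itself (of the form $\norm{\n v}_2\norm{\nh v}_2$), so the resulting differential inequality is superlinear and closes only locally or for small data. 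This is precisely why \cite{cc2017} assumes $(v_0,\r_0)\in L^\infty$: with $\norm{v}_\infty$ bounded this term is $\leq \norm{v}_\infty\norm{\nh v}_2\norm{\la_h v}_2$ and Gronwall closes, since $\int_0^t\norm{\nh v}^2_2\,ds$ is controlled. Your proposal never produces any $L^\infty$ information, so the known machinery for the $H^1$ step is unavailable. (b) Even granting $H^1$ bounds, your continuation argument fails: Proposition \ref{po:lgc}(ii) shows the $H^1$ norm of the local solution is bounded by a fixed constant all the way up to the maximal time $t^*_0$, so boundedness of the $H^1$ norm is \emph{not} a blow-up criterion for this system, and finiteness of your estimates would not rule out $t^*_0<\infty$. (c) Two smaller points: the buoyancy term in the $L^4$ estimate is not the main obstacle and needs no coupled $\r$-estimate (which would anyway be circular, since the $H^1$ estimate for $\r$ contains $w\p_z\r$ and needs strong control of $v$); the paper simply integrates by parts and uses $\oo{\abs{\r}^2\abs{\p_z v}^2}\leq C\norm{\r}^2_{H^1}(1+\norm{\p_z v}^4_4)$, where $\sup_t\norm{\r}_{H^1}\leq C$ is already furnished by Proposition \ref{po:lgc} on $[0,t^*_0)$. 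Likewise your Gronwall coefficient must be $1+\norm{\nh v}^2_{H^1}$ (integrable by (\ref{fl:VTC})), not a multiple of $\norm{\nh v}^2_2$ from the basic energy identity: the slicewise estimates require $\sup_z$ control, hence $\norm{\nh\p_z v}_2$.

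What the paper actually does, and what your proposal is missing, is a smoothing-and-restart argument rather than a global a priori estimate. From (\ref{fl:VTC}), $\int_{t^*_0/4}^{t^*_0}\xkh{\norm{\n\nh v}^2_2+\norm{\n\nh \r}^2_2}ds\leq C$, so there is a time $t_0\in(t^*_0/4,t^*_0)$ with $\norm{\n\nh v}^2_2(t_0)+\norm{\n\nh \r}^2_2(t_0)\leq C/t^*_0$; the anisotropic embedding of Proposition \ref{po:delta} then gives $(v,\r)(t_0)\in L^\infty(\O)$ --- the local solution gains at positive time exactly the $L^\infty$ regularity that the initial data lacks. The $L^4$ estimate (your step (3), with the Gronwall coefficients supplied by Proposition \ref{po:lgc}) gives $\p_z v(t_0)\in L^4(\O)$. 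Now $(v,\r)(t_0)\in H^1\cap L^\infty$ with $\p_z v(t_0)\in L^4$, so the global theorem of \cite{cc2017} (Proposition \ref{po:J1t}) applies with $t_0$ as initial time; uniqueness glues the two solutions, and combining (\ref{fl:0t0}) with (\ref{fl:t01}) yields $\ma{J}_2(t)$. In other words, the $L^\infty$ hypothesis of \cite{cc2017} is not removed by strengthening the $H^1$-level estimates --- it is verified automatically at a well-chosen positive time, and the known global result is then invoked as a black box. Without this restart step, the argument you outline cannot be completed.
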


\begin{remark}
The result of Theorem \ref{th:J2t} still holds for $\p_z v_0 \in L^m(\O)$ with $m \in (4,\infty]$, since $\O=(0,1)^2\times(-1,1)$ is a set of finite measure.~For the case of $(v_0,\r_0) \in H^1(\O)$ with $\p_z v_0 \in L^m(\O)(2 < m < 4)$, the global existence of strong solutions to the system (\ref{eq:ptl}) is unknown without assuming that $(v_0,\r_0) \in L^\infty(\O)$.
\end{remark}

Based on the global existence of strong solutions in Theorem \ref{th:J2t}, we have the corresponding global strong convergence theorem.
\begin{theorem}\label{th:J3t}
Given a periodic function pair $(v_0,T_0) \in H^1(\O)$ satisfying $\dz{\nh \d v_0}=0$ and $\p_z v_0 \in L^4(\O)$. Suppose that $(v_\e,w_\e,\r_\e)$ is a global weak solution of the system (\ref{eq:ptv}), satisfying the energy inequality (\ref{fl:vwt}), and that $(v,\r)$ is the unique global strong solution of the system (\ref{eq:ptl}), with the same boundary and initial conditions (\ref{ga:are})-(\ref{ga:vew}) and symmetry condition (\ref{eq:eve}). Let
\begin{equation*}
  (V_\e,W_\e,\g_\e)=(v_\e-v,w_\e-w,\r_\e-\r).
\end{equation*}Then the following estimate holds
\begin{flalign*}
  &\sup_{0 \leq t \leq \ma{T}}\xkh{\norm{(V_\e,\e W_\e,\g_\e)}^2_2}(t)+\int^\ma{T}_0{\xkh{\norm{\nh V_\e}^2_2+\e^{\be-2}\norm{\p_z V_\e}^2_2}}dt\\
  &\qquad\quad+\int^\ma{T}_0{\xkh{\e^2\norm{\nh W_\e}^2_2+\norm{\nh \g_\e}^2_2+\e^\be\norm{\p_z W_\e}^2_2+\e^{\ga-2}\norm{\p_z \g_\e}^2_2}}dt\leq \e^\eta\ma{J}_3(\ma{T}),
\end{flalign*}for any $\ma{T}>0$, where $\eta=\min\{2,\be-2,\ga-2\}$ with $2<\be,\ga<\infty$,~and $\ma{J}_3(t)$ is a nonnegative continuously increasing function defined on $[0,\infty)$.~Therefore, the local strong convergences in Theorem \ref{th:VTC} can be extended to the global strong convergences.
\end{theorem}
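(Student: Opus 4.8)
The plan is to run the relative-energy (difference) argument that already underlies Theorem~\ref{th:VTC}, the only genuinely new input being that the maximal existence time of the strong solution is now infinite. Concretely, Theorem~\ref{th:J2t} upgrades the local strong solution $(v,\r)$ of \cite{cc2017} to a global one, so that $t^*_0=+\infty$, and it furnishes the global-in-time bound
\begin{equation*}
\sup_{0\le s\le t}\norm{(v,\r)}^2_{H^1(\O)}+\ds{\xkh{\norm{\nh v}^2_{H^1(\O)}+\norm{\nh \r}^2_{H^1(\O)}+\norm{(\p_t v,\p_t \r)}^2_{L^2(\O)}}}\le \ma{J}_2(t).
\end{equation*}
This is exactly what will keep the Gr\"onwall constants finite on every interval $[0,\ma{T}]$, rather than blowing up like the factor $e^{C(t^*_0+1)}$ of Theorem~\ref{th:VTC}.

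First I would write down the system satisfied by $(V_\e,W_\e,\g_\e)$ and $P_\e:=p_\e-p$ by subtracting (\ref{eq:ptl}) from (\ref{eq:ptv}). Since both $w_\e$ and $w$ are reconstructed from the horizontal velocities through the divergence-free and symmetry conditions, one has $W_\e=-\dk{\nh\d V_\e}$ and $\nh\d V_\e+\p_z W_\e=0$. Rewriting $-\e^{\be-2}\p_{zz}v_\e=-\e^{\be-2}\p_{zz}V_\e-\e^{\be-2}\p_{zz}v$ (and similarly for $\r$) moves the absent vertical dissipation of the limit system to the right-hand side as the explicit source $\e^{\be-2}\p_{zz}v$; the hydrostatic balance $\p_z p-\r=0$ likewise produces, in the $W_\e$-equation, the source $-\e^2(\p_t w-\la_h w-\e^{\be-2}\p_{zz}w+v\d\nh w+w\p_z w)$.

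Next I would test the $V_\e$-equation by $V_\e$, the $W_\e$-equation by $W_\e$ (so that the weight $\e^2$ appears naturally on $\tfrac12\frac{d}{dt}\norm{W_\e}^2_2$ and on $\e^2\norm{\nh W_\e}^2_2$, $\e^\be\norm{\p_z W_\e}^2_2$), and the $\g_\e$-equation by $\g_\e$, then integrate over $\O$. Three structural cancellations occur exactly as in Theorem~\ref{th:VTC}: the pressure terms combine into $\oo{\n P_\e\d (V_\e,W_\e)}=0$ by the divergence-free condition; the Coriolis term satisfies $\oo{f_0(\vec k\times V_\e)\d V_\e}=0$; and the buoyancy--stratification coupling cancels, since the $-\g_\e$ in the $W_\e$-equation and the $+W_\e$ in the $\g_\e$-equation contribute $-\oo{\g_\e W_\e}+\oo{W_\e \g_\e}=0$. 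Among the nonlinear terms, those transporting the differences, e.g. $\oo{(u_\e\d\n)V_\e\d V_\e}$, vanish after integration by parts, leaving only the transport-by-difference terms $\oo{(V_\e\d\nh)v\d V_\e}$, $\oo{W_\e\p_z v\d V_\e}$ and their $\g_\e$-analogues to be absorbed. This yields a differential inequality of the form
\begin{equation*}
\frac{d}{dt}\norm{(V_\e,\e W_\e,\g_\e)}^2_2+\ma{D}_\e(t)\le C(t)\norm{(V_\e,\e W_\e,\g_\e)}^2_2+\e^\eta G(t),
\end{equation*}
where $\ma{D}_\e$ collects all the dissipation norms on the left-hand side of the theorem, $C(t)$ depends only on Sobolev norms of $(v,\r)$, and $\e^\eta G(t)$ gathers the explicit sources after Young's inequality (the $\e^{\be-2}\p_{zz}v$ and $\e^{\ga-2}\p_{zz}\r$ terms contribute orders $\e^{\be-2}$, $\e^{\ga-2}$, and the $W_\e$-equation sources contribute order $\e^2$, whence $\eta=\min\{2,\be-2,\ga-2\}$). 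Since $V_\e(0)=\g_\e(0)=0$ and $W_\e(0)=0$, Gr\"onwall's inequality integrated on $[0,\ma{T}]$ gives
\begin{equation*}
\sup_{0\le t\le \ma{T}}\norm{(V_\e,\e W_\e,\g_\e)}^2_2(t)+\int^{\ma{T}}_0\ma{D}_\e\,dt\le \e^\eta\xkh{\int^{\ma{T}}_0 G(t)\,dt}\exp\xkh{\int^{\ma{T}}_0 C(t)\,dt}=:\e^\eta\ma{J}_3(\ma{T}).
\end{equation*}

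The main obstacle is to verify that $\int_0^{\ma{T}}C(t)\,dt$ and $\int_0^{\ma{T}}G(t)\,dt$ are finite and depend continuously and monotonically on $\ma{T}$, which is precisely where $\ma{J}_2$ enters. The delicate contributions are the vertical coupling terms such as $\oo{W_\e\p_z v\d V_\e}$: using $W_\e=-\dk{\nh\d V_\e}$ together with an anisotropic Ladyzhenskaya-type inequality, this is bounded by a fraction of the dissipation $\norm{\nh V_\e}^2_2+\norm{\p_z V_\e}^2_2$ times a coefficient built from $\norm{\p_z v}_4$ and $\norm{\nh v}_{H^1}$, the former globally bounded and the latter square-integrable in time by Theorem~\ref{th:J2t}; this is exactly the estimate whose global-in-time validity was the only element missing from the local theory of \cite{cc2017}. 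Once $C$ and $G$ are identified as elements of $L^1(0,\ma{T})$ controlled by $\ma{J}_2(\ma{T})$, the function $\ma{J}_3$ is manifestly nonnegative, continuous and increasing, and letting $\e\to0$ yields the stated estimate and the strong convergences at rate $O(\e^{\eta/2})$, extending those of Theorem~\ref{th:VTC} from $[0,t^*_0)$ to all of $[0,\infty)$.
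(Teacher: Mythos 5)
Your overall strategy --- rerun the relative-energy estimate behind Theorem \ref{th:VTC} and observe that Theorem \ref{th:J2t} keeps the Gr\"onwall coefficients integrable on every $[0,\ma{T}]$ --- is exactly the paper's strategy; in fact the paper's proof is essentially one line: substitute the weak solution's energy inequality (\ref{fl:e22}) and the global bound $\ma{J}_2$ of Theorem \ref{th:J2t} into the inequality (\ref{fl:Ft}) already derived in the proof of Proposition \ref{po:eta}. The gap is in how you propose to re-derive that inequality. You subtract the two systems and test the $V_\e$-, $W_\e$-, $\g_\e$-equations with $V_\e$, $W_\e$, $\g_\e$. But $(v_\e,w_\e,\r_\e)$ is only a global \emph{weak} solution (the data are merely $H^1$ with $\p_z v_0\in L^4$, so (\ref{eq:ptv}) is not known to possess a global strong solution), and for a weak solution this manipulation is not justified: $\p_t v_\e$ is only a distribution, $v_\e$ is not an admissible test function in its own weak formulation, the quantity $\frac{d}{dt}\norm{(V_\e,\e W_\e,\g_\e)}_2^2$ need not exist, and the claimed cancellation $\oo{(u_\e\d\n)V_\e\d V_\e}=0$ involves a trilinear term with two weak-solution factors that need not even be finite in 3D. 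This is the classical weak--strong difficulty, and it is precisely why the paper does \emph{not} subtract the equations in Sections 3--4; it does so only in Section 5, where $H^2$ data guarantee that $(v_\e,w_\e,\r_\e)$ itself is strong.

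The rigorous route the paper takes instead is to combine four integral relations: the weak solution's energy \emph{inequality} (\ref{fl:e22}) from Remark \ref{re:lh}, the strong solution's energy equality (\ref{fl:rtw}), the weak formulation of (\ref{eq:ptv}) tested against the strong solution $(v,w,\r)$ (Proposition \ref{po:vew}, whose rigorous justification is delegated to \cite{lt2019,ba2013}), and the strong system (\ref{eq:ptl}) tested against the weak solution, equation (\ref{fl:udn}). Subtracting yields (\ref{fl:Ver}), in which every surviving trilinear term has at least one strong-solution factor, so Lemma \ref{le:phi} applies. Moreover, since $t\mapsto\ma{F}(t)$ is not known to be differentiable for a weak solution, the Gr\"onwall step is performed on the explicit majorant $\ma{G}(t)$ (using $\ma{F}\le\ma{G}$ and $\ma{G}(0)=0$), not on $\frac{d}{dt}\norm{\cdot}_2^2$ as you wrote. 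If you replace your testing step by this four-identity bookkeeping --- or simply invoke Proposition \ref{po:eta}, whose proof nowhere uses finiteness of $t^*_0$, only the strong solution's energy bound, now supplied globally by $\ma{J}_2$ --- then the rest of your argument (sources of orders $\e^2$, $\e^{\be-2}$, $\e^{\ga-2}$, hence $\e^\eta$, and the monotone continuous dependence of $\ma{J}_3$ on $\ma{T}$) goes through as you describe.
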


Finally, we suppose that initial data $(v_0,\r_0)$ belongs to $H^2(\O)$.~Then from (\ref{eq:yxi}) it follows that $(v_0,w_0,\r_0)$ belongs to $H^1(\O)$. ~According to Remark \ref{re:ws}, there exists a unique local strong solution $(v_\e,w_\e,\r_\e)$ to the system (\ref{eq:ptv}) corresponding to (\ref{ga:are})-(\ref{eq:eve}). Moreover, the system (\ref{eq:ptl}) has a unique global strong solution $(v,\r)$ (see \cite{es2016}).~In this case, we also have the following strong convergence theorem.
\begin{theorem}\label{th:J7t}
Given a periodic function pair $(v_0,\r_0) \in H^2(\O)$ satisfying $\dz{\nh \d v_0}=0$. Suppose that $(v_\e,w_\e,\r_\e)$ is the unique local strong solution of the system (\ref{eq:ptv}), and that $(v,\r)$ is the unique global strong solution of the system (\ref{eq:ptl}), with the same boundary and initial conditions (\ref{ga:are})-(\ref{ga:vew}) and symmetry condition (\ref{eq:eve}). Let
\begin{equation*}
  (V_\e,W_\e,\g_\e)=(v_\e-v,w_\e-w,\r_\e-\r).
\end{equation*}Then, for any finite time $\ma{T}>0$, there is a small positive constant $\e(\ma{T})=\xkh{\frac{3\ell^2_0}{8\ma{J}_6(\ma{T})}}^{1/\eta}$ such that the system (\ref{eq:ptv}) exists a unique strong solution $(v_\e,w_\e,\r_\e)$ on the time interval $[0,\ma{T}]$, and that the system (\ref{fl:Ve0})-(\ref{fl:zWe}) (see Section 5, below) has the following estimate
\begin{flalign*}
  &\sup_{0 \leq t \leq \ma{T}}\xkh{\norm{(V_\e,\e W_\e,\g_\e)}^2_{H^1}}(t)+\int^\ma{T}_0{\xkh{\norm{\nh V_\e}^2_{H^1}+\e^{\be-2}\norm{\p_z V_\e}^2_{H^1}}}dt\\
  &\quad+\int^\ma{T}_0{\xkh{\e^2\norm{\nh W_\e}^2_{H^1}+\norm{\nh \g_\e}^2_{H^1}+\e^\be\norm{\p_z W_\e}^2_{H^1}+\e^{\ga-2}\norm{\p_z \g_\e}^2_{H^1}}}dt
  \leq \e^\eta\ma{J}_7(\ma{T}),
\end{flalign*}provided that $\e \in (0,\e(\ma{T}))$, where $\eta=\min\{2,\be-2,\ga-2\}$ with $2<\be,\ga<\infty$,~and $\mathcal{J}_7(t)$ is a nonnegative continuously increasing function that does not depend on $\e$.~As a result, we have the following strong convergences
\begin{gather*}
  (v_\e,\e w_\e,\r_\e) \rightarrow (v,0,\r),~in~L^{\infty}\xkh{[0,\ma{T}];H^1(\O)},\\
  \xkh{\nh v_\e,\e^{(\be-2)/2}\p_z v_\e,\e \nh w_\e,w_\e} \rightarrow (\nh v,0,0,w),~in~L^2\xkh{[0,\ma{T}];H^1(\O)},\\
  \xkh{\nh \r_\e,\e^{\be/2}\p_z w_\e,\e^{(\ga-2)/2}\p_z \r_\e} \rightarrow (\nh \r,0,0),~in~L^2\xkh{[0,\ma{T}];H^1(\O)},\\
  w_\e \rightarrow w,~in~L^{\infty}\xkh{[0,\ma{T}];L^2(\O)},
\end{gather*}and the rate of convergence is of the order $O(\e^{\eta/2})$.
\end{theorem}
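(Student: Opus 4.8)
The plan is to obtain an $H^1$ a priori estimate for the difference $(V_\e,W_\e,\g_\e)=(v_\e-v,w_\e-w,\r_\e-\r)$ and then to upgrade the merely local strong solution of (\ref{eq:ptv}) to one existing on all of $[0,\ma{T}]$ by a continuation argument. First I would derive the evolution system for $(V_\e,W_\e,\g_\e)$ by subtracting (\ref{eq:ptl}) from (\ref{eq:ptv}), writing $P_\e=p_\e-p$ for the pressure difference. The left-hand sides reproduce the scaled operators acting on the differences (the degenerate vertical dissipations $\e^{\be-2}\p_{zz}V_\e$ and $\e^{\ga-2}\p_{zz}\g_\e$, the $\e^2$-weighted $W_\e$ block, the Coriolis term and $\nh P_\e$), together with $\nh\d V_\e+\p_z W_\e=0$. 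The crucial point is that, since the limiting system lacks vertical dissipation and drops the $\e^2$ inertial block in the $w$-equation, the subtraction produces explicit remainder forcings $\e^{\be-2}\p_{zz}v$ in the $V_\e$-equation, $\e^{\ga-2}\p_{zz}\r$ in the $\g_\e$-equation, and $\e^2(\p_t w-\la_h w-\e^{\be-2}\p_{zz}w+\ldots)$ in the $W_\e$-equation; these are the sole sources of the $O(\e^\eta)$ smallness.

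Next I would run the $H^1$ energy estimate, testing the difference equations with $(V_\e,\e^2 W_\e,\g_\e)$ and with $-\la_h$ and $-\p_{zz}$ applied to the same triple (equivalently, differentiating the equations and pairing in $L^2$). The pressure gradient $\nh P_\e$ is removed against the divergence-free field after integration by parts, the barotropic mode being handled via $\dz{\nh\d V_\e}=0$. The nonlinear terms split into three groups: (i) terms linear in $(V_\e,W_\e,\g_\e)$ with coefficients drawn from the limit solution, absorbed by Gronwall using the uniform bounds $\sup_{[0,\ma{T}]}\norm{(v,\r)}_{H^2}+\int_0^\ma{T}\norm{\nh(v,\r)}_{H^2}^2\,dt<\infty$ furnished by the global strong solution of \cite{es2016}; (ii) genuinely quadratic terms, controlled by anisotropic Sobolev, Ladyzhenskaya and Agmon inequalities after using $W_\e=-\dk{\nh\d V_\e}$ to trade the vertical velocity difference for $\nh\d V_\e$; and (iii) the remainder forcings. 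For group (iii) the key device is to pair, e.g., $\e^{\be-2}\p_{zz}v$ against $V_\e$ and integrate by parts, giving $\e^{\be-2}\norm{\p_z v}_2\norm{\p_z V_\e}_2\le\tfrac12\e^{\be-2}\norm{\p_z V_\e}_2^2+\tfrac12\e^{\be-2}\norm{\p_z v}_2^2$, whose first term is absorbed by the degenerate dissipation already on the left and whose second is $O(\e^{\be-2})$; at the $H^1$ level the analogous step costs one more derivative on $v$, which is exactly covered by $v\in H^2$. The $\e^{\ga-2}$ and $\e^2$ remainders are treated identically, producing the exponents $\ga-2$ and $2$, whose minimum is $\eta$. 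Closing with Gronwall yields the a priori bound by $\e^\eta\ma{J}_6(\ma{T})$, valid as long as the local strong solution of (\ref{eq:ptv}) persists.

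Finally I would close the existence gap by continuation. Let $[0,\ma{T}_\e)$ be the maximal interval of the local strong solution from Remark \ref{re:ws}, and suppose for contradiction $\ma{T}_\e\le\ma{T}$. With $\ell_0$ the continuation radius and $\e(\ma{T})=\xkh{3\ell_0^2/(8\ma{J}_6(\ma{T}))}^{1/\eta}$, for $\e\in(0,\e(\ma{T}))$ the a priori bound gives $\norm{(V_\e,\e W_\e,\g_\e)}^2_{H^1}\le\e^\eta\ma{J}_6(\ma{T})<\tfrac38\ell_0^2$ on $[0,\ma{T}_\e)$, strictly below the continuation threshold. Combined with the uniform $H^1$ bound on $(v,w,\r)$, the triangle inequality yields a uniform-in-$\e$ $H^1$ bound on $(v_\e,w_\e,\r_\e)$ up to $\ma{T}_\e$, which by the local well-posedness/continuation criterion permits extending the solution beyond $\ma{T}_\e$, contradicting maximality. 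Hence $\ma{T}_\e>\ma{T}$ and the estimate holds on $[0,\ma{T}]$ with $\ma{J}_7\simeq\ma{J}_6$. Reading off each weighted term and sending $\e\to0$ then gives the asserted strong convergences and the rate $O(\e^{\eta/2})$.

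The main obstacle I anticipate is closing the $H^1$ nonlinear estimates \emph{uniformly in $\e$}: the vertical-interaction terms (such as $W_\e\p_z v$, $w_\e\p_z V_\e$, and their $H^1$ counterparts like $\p_z(w_\e\p_z v_\e)$) threaten to demand vertical dissipation that degenerates like $\e^{\be-2}$, so they must instead be tamed through the incompressibility relation $W_\e=-\dk{\nh\d V_\e}$ together with the $H^2$-regularity of the limit solution, ensuring no derivative ever falls on a factor carrying a degenerate $\e$-weight. A secondary difficulty is the consistent treatment of the $\e^2$-weighted $W_\e$-block, in particular matching its time-derivative and Coriolis contributions so that the energy identity genuinely closes rather than leaving an uncontrolled $O(1)$ term.
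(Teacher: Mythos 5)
Your overall architecture --- subtract the systems, run an $H^1$ energy estimate on $(V_\e,\e W_\e,\g_\e)$, then a continuation/contradiction argument to get existence up to $\ma{T}$ --- is the same as the paper's, but there is a genuine gap at the central analytic step: you assert that the $H^1$ estimate closes ``with Gronwall'' unconditionally. It does not. Testing the difference equations with $-\la V_\e$, $-\la W_\e$, $-\la\g_\e$ produces terms that are \emph{cubic} in the difference, such as $\oo{\zkh{(U_\e \d \n)V_\e} \d \la V_\e}$ and $\oo{(U_\e \d \n \g_\e)\la \g_\e}$, and after the anisotropic estimates of Lemma \ref{le:phi} and Young's inequality these leave a contribution of the form
\begin{equation*}
C_0\xkh{\norm{\n\nh V_\e}_2^2+\e^2\norm{\n\nh W_\e}_2^2+\norm{\n\nh \g_\e}_2^2}\times\norm{\n(V_\e,\e W_\e,\g_\e)}_2^2,
\end{equation*}
i.e.\ dissipation times energy. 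Gronwall cannot digest this term: its time integral is controlled only by the very inequality one is trying to close, so the argument is circular. The only way out is to absorb it into the left-hand dissipation, which requires the a priori smallness $\sup_{0\le s\le t}\xkh{\norm{\n(V_\e,\g_\e)}_2^2+\e^2\norm{\n W_\e}_2^2}\le\ell_0^2$ with $\ell_0$ tied to the absorption constant (the paper takes $\ell_0=\sqrt{3/(14C_0)}$ in Proposition \ref{po:J6t}). This is exactly why the theorem's threshold $\e(\ma{T})=\xkh{\frac{3\ell_0^2}{8\ma{J}_6(\ma{T})}}^{1/\eta}$ has the form it does: Proposition \ref{po:J56} runs a bootstrap, defining $t^\delta_\e$ as the last time the smallness condition holds, and uses the \emph{conditional} estimate to show the norm stays at most $3\ell_0^2/8<\ell_0^2$ whenever $\e<\e(\ma{T})$, so that $t^\delta_\e=\min\{\ma{T}^*_\e,\ma{T}\}$; only after the smallness is thus propagated can the blow-up criterion be invoked to conclude $\ma{T}^*_\e>\ma{T}$.

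In your write-up, $\ell_0$ is instead introduced as a ``continuation radius'' of the local existence theory, and the inequality $\e^\eta\ma{J}_6(\ma{T})<\tfrac38\ell_0^2$ is used only in the final contradiction step. That misplaces the mechanism: the local well-posedness theory for (\ref{eq:ptv}) has no fixed threshold radius --- any finite $H^1$ bound up to $\ma{T}^*_\e$ would preclude blow-up --- so in your logic the constants $\ell_0$ and $\e(\ma{T})$ play no essential role, which is a sign that the place they are actually needed (closing the energy estimate) has been skipped. Note also that the obstacle you anticipate, derivatives falling on $\e$-weighted factors, concerns the terms linear in the difference and is handled by $W_\e=-\dk{\nh \d V_\e}$ and the $H^2$ regularity of $(v,\r)$; it does nothing for the cubic self-interaction terms above, which survive any rearrangement of the vertical velocity. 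The repair is to replace your single Gronwall step by the two-stage argument: first prove the $H^1$ estimate under the smallness hypothesis, then run the continuity/bootstrap argument to show the hypothesis propagates on $[0,\min\{\ma{T}^*_\e,\ma{T}\})$ for $\e\in(0,\e(\ma{T}))$, and only then conclude with your continuation contradiction.
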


\section{Strong convergence for $H^1$ initial data}
In this section, assume that initial data $(v_0,\r_0) \in H^1(\O)$, where initial velocity $v_0$ satisfies
\begin{equation*}
  \dz{\nh \d v_0(x,y,z)}=0,~\textnormal{for all}~(x,y)\in M,
\end{equation*}we prove that the scaled Boussinesq equations with rotation (\ref{eq:ptv}) strongly converge to the primitive equations with only horizontal viscosity and diffusivity (\ref{eq:ptl}) as the aspect ration parameter $\e$ goes to zero.

As mentioned in the introduction, for initial data $(v_0,\r_0) \in H^1(\O)$, the system (\ref{eq:ptv}) subject to (\ref{ga:are})-(\ref{eq:eve}) has a global weak solution $(v_\e,w_\e,\r_\e)$, while the system (\ref{eq:ptl}) corresponding to (\ref{ga:are})-(\ref{eq:eve}) exists a unique local strong solution $(v,\r)$.~Denote by $t^*_0$ the maximal existence time of this local strong solution.~The well-posedness of strong solutions to the primitive equations with only horizontal viscosity and diffusivity (\ref{eq:ptl}) is as follows (see \cite{cc2017}).

\begin{proposition}\label{po:lgc}
Let $v_0$,$\r_0\in H^1(\O)$ be two periodic functions with $\dz{\nh \d v_0(x,y,z)}=0,~\textnormal{for all}~(x,y)\in M$. Then the following assertions hold true:\\
(i) There exists a unique local strong solution $(v,\r)$ to the primitive equations with only horizontal viscosity and diffusivity (\ref{eq:ptl}) corresponding to (\ref{ga:are})-(\ref{eq:eve}), such that
\begin{gather*}
  (v,\r) \in L^{\infty}\xkh{[0,t^*_0);H^1(\O)} \cap C\xkh{[0,t^*_0);L^2(\O)},\\
  (\nh v,\nh \r) \in L^2\xkh{[0,t^*_0);H^1(\O)},(\p_t v,\p_t \r) \in L^2\xkh{[0,t^*_0);L^2(\O)},
\end{gather*}where $t^*_0$ is the maximal existence time of this local strong solution;\\
(ii) The local strong solution $(v,\r)$ to the system (\ref{eq:ptl}) satisfies the following energy estimate
\begin{flalign}
  &\sup_{0 \leq s \leq t}\xkh{\norm{(v,\r)}^2_{H^1(\O)}}(s)+\ds{\norm{\nh v}^2_{H^1(\O)}}\nonumber\\
  &\qquad+\ds{\xkh{{\norm{\nh \r}^2_{H^1(\O)}}+\norm{(\p_t v,\p_t \r)}^2_{L^2(\O)}}} \leq C,\label{fl:VTC}
\end{flalign}for any $t \in [0,t^*_0)$.~Here $C$ is a positive constant.
\end{proposition}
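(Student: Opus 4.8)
The plan is to follow the standard Galerkin/\emph{a priori}-estimate programme for the primitive equations, adapted to the horizontal-viscosity-only setting; since this is the result of \cite{cc2017}, I only indicate the mechanism. First I would fix a Galerkin basis respecting the spatial periodicity and the symmetry condition (\ref{eq:eve}), eliminate $w$ and $p$ through the diagnostic relations $w=-\dk{\nh\d v}$ and $\p_z p=\r$ (so that $p$ is recovered up to a surface pressure determined by $\nh\d v$), and reduce (\ref{eq:ptl}) to a finite-dimensional ODE system for the coefficients of $(v,\r)$. Local solvability of the ODE is immediate, and all estimates below are carried out on the approximants but written for $(v,\r)$.

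The basic $L^2$ estimate is clean and exhibits the special role of the stratification term. Testing the $v$-equation with $v$ and the $\r$-equation with $\r$, the transport terms $(v\d\nh)v+w\p_z v$ and $v\d\nh\r+w\p_z\r$ vanish by $\n\d u=0$ and the Coriolis term is skew. Integration by parts gives $\oo{\nh p\d v}=-\oo{\r w}$ from $\p_z p=\r$, while the stratification term contributes $+\oo{w\r}$; these cancel exactly, leaving
\begin{equation*}
\tfrac12\tfrac{d}{dt}\xkh{\norm{v}^2_2+\norm{\r}^2_2}+\norm{\nh v}^2_2+\norm{\nh\r}^2_2=0.
\end{equation*}

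The heart of the proof is the $H^1$ estimate, and this is where the absence of vertical viscosity bites. Testing with $-\la_h v$ and $-\la_h\r$ produces the horizontal dissipation $\norm{\la_h v}^2_2+\norm{\la_h\r}^2_2$, and differentiating in $z$ and testing with $\p_z v,\p_z\r$ produces $\norm{\nh\p_z v}^2_2+\norm{\nh\p_z\r}^2_2$ — note that horizontal viscosity \emph{does} control the horizontal gradient of the vertical derivative, which is exactly the dissipation underlying the claimed regularity $\nh v,\nh\r\in L^2([0,t^*_0);H^1)$. The dangerous contribution comes from $\p_z\xkh{w\p_z v}=-(\nh\d v)\p_z v+w\p_z^2 v$; after integrating by parts in $z$ the genuinely cubic term $\oo{(\nh\d v)\abs{\p_z v}^2}$ survives, together with its analogue $\oo{(\nh\d v)\abs{\p_z\r}^2}$. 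The main obstacle is to bound these (and the companion term $\oo{\dk{\nh\d v}\,\p_z v\d\la_h v}$ obtained by substituting $w=-\dk{\nh\d v}$) without any $\p_z^2$-dissipation. The plan is to use anisotropic Ladyzhenskaya/Sobolev inequalities in the vertical variable — writing $w$ as the vertical integral of $\nh\d v$ and applying Minkowski's inequality in $z$ followed by H\"older — to obtain bounds of the form $C\norm{\nh v}_2^{1/2}\norm{\nh v}_{H^1(\O)}^{1/2}\norm{\p_z v}_2\norm{\nh\p_z v}_2$, so that Young's inequality absorbs $\norm{\nh\p_z v}^2_2$ and $\norm{\la_h v}^2_2$ into the dissipation at the cost of a superlinear function of $\norm{(v,\r)}^2_{H^1(\O)}$.

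Collecting the estimates yields a differential inequality $\tfrac{d}{dt}Y+(\text{dissipation})\le C\,\ma{P}(Y)$ with $Y=\norm{v}^2_{H^1(\O)}+\norm{\r}^2_{H^1(\O)}$ and $\ma{P}$ a polynomial of superlinear growth; an ODE-comparison argument then provides a maximal time $t^*_0>0$ on which $Y$ remains finite, which is precisely the content of the energy estimate (\ref{fl:VTC}). The bounds on $\nh v,\nh\r$ in $L^2([0,t^*_0);H^1)$ follow from the absorbed dissipation, and $\p_t v,\p_t\r\in L^2([0,t^*_0);L^2)$ follow by reading them off from the equations using the spatial bounds just obtained. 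Finally, the uniform estimates let me pass to the limit in the Galerkin scheme by the Aubin--Lions lemma (strong $L^2$ compactness in time handling the quadratic and transport nonlinearities), and uniqueness follows from an $L^2$ energy estimate on the difference of two solutions, closed by Gronwall once the same anisotropic inequalities are applied to the difference of the transport terms.
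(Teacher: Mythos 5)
The paper itself offers no proof of Proposition \ref{po:lgc}: it is quoted as a known result from Cao--Li--Titi \cite{cc2017}, so your sketch has to stand on its own, and it does not. Your $L^2$ estimate is correct, and for the cubic terms $\oo{(\nh\d v)\abs{\p_z v}^2}$, $\oo{(\nh\d v)\abs{\p_z \r}^2}$ the absorption you describe does work: the slice-wise Ladyzhenskaya/Minkowski argument gives a bound in which the dissipation enters with total power $3/2<2$, so Young closes those. The genuine gap is the term you parenthetically lump in with them, $\oo{\xkh{\dk{\nh\d v}}\p_z v\d\la_h v}$, coming from $w\p_z v$ tested against $-\la_h v$ (its analogue $\oo{w(\p_z\r)\la_h\r}$ has the same problem). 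Because one factor is already $\la_h v$ at full $L^2$ strength, every admissible anisotropic estimate --- Lemma \ref{le:phi} with the vertical integral necessarily placed on the $w$-factor, or your slicing argument --- yields
\begin{equation*}
  \oo{\abs{w}\abs{\p_z v}\abs{\la_h v}}\leq C\norm{\nh v}_2^{1/2}\xkh{\norm{\nh v}_2^{1/2}+\norm{\nh^2 v}_2^{1/2}}\norm{\p_z v}_2^{1/2}\xkh{\norm{\p_z v}_2^{1/2}+\norm{\nh\p_z v}_2^{1/2}}\norm{\la_h v}_2,
\end{equation*}
whose worst part is $\norm{\nh v}_2^{1/2}\norm{\p_z v}_2^{1/2}\,\norm{\nh^2 v}_2^{1/2}\norm{\nh\p_z v}_2^{1/2}\norm{\la_h v}_2$: the dissipation appears with total power exactly $2$, multiplied by the order-one coefficient $\norm{\nh v}_2^{1/2}\norm{\p_z v}_2^{1/2}$. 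Young's inequality cannot absorb a term that is quadratic in the dissipation, and no redistribution of derivatives helps: integrating by parts horizontally produces $\oo{\xkh{\dk{\abs{\nh^2 v}}}\abs{\p_z v}\abs{\nh v}}$ with the same derivative count, while putting the vertical supremum on $\p_z v$ or $\la_h v$ requires $\norm{\p_{zz}v}_2$ or $\norm{\nh^2\p_z v}_2$, neither of which the horizontal dissipation controls. So the differential inequality $\frac{d}{dt}Y+\textnormal{dissipation}\leq C\ma{P}(Y)$ is never reached; the obstruction is the criticality of this term under the anisotropic scaling of the system, not a technical choice of inequality.

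This is precisely why \cite{cc2017} is a nontrivial paper and why its local theory is not a global-in-space Galerkin/Gronwall argument: Cao--Li--Titi close the estimates by \emph{local-in-space} energy estimates, exploiting the absolute continuity of the integral so that $\norm{\n v_0}_{2}$ is small on sufficiently small cubes; there the small local coefficient multiplying the quadratic-in-dissipation term can be absorbed, and patching yields a solution on a time interval determined by the profile of the initial data (its modulus of absolute continuity), not merely by its $H^1$ norm. Had your scheme worked, the existence time in Proposition \ref{po:lgc} would depend only on $\norm{(v_0,\r_0)}_{H^1}$ --- a strictly stronger statement than what is known for this system, which should itself have been a warning sign. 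The logical order matters downstream in the present paper as well: the $L^4$ estimate on $\p_z v$ in the proof of Theorem \ref{th:J2t} closes only because the $H^1$ bound (\ref{fl:VTC}) of the already-constructed local solution is fed into it; it is not available to help construct that solution in the first place.
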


The following proposition is formally obtained by testing the scaled Boussinesq equations with rotation (\ref{eq:ptv}) with $(v,w,\r)$.~As for the rigorous justification for this proposition, we refer to the work of Li-Titi\cite{lt2019} and Bardos \et\cite{ba2013}.
\begin{proposition}\label{po:vew}
Given a periodic function pair $(v_0,\r_0) \in H^1(\O)$ with
\begin{equation*}
  \dz{\nh \d v_0}=0~\textnormal{and}~w_0(x,y,z)=-\dk{\nh \d v_0(x,y,\xi)}.
\end{equation*}Suppose that $(v_\e,w_\e,\r_\e)$ is a global weak solution of the system (\ref{eq:ptv}), satisfying the energy inequality (\ref{fl:vwt}), and that $(v,\r)$ is the unique local strong solution of the system (\ref{eq:ptl}). Then the following integral equality holds
\begin{flalign}
  &\xkh{\oo{\xkh{v_\e \d v+\e^2 w_\e w+\r_\e \r}}}(r)+\rt{f_0(\vec{k} \times v_\e) \d v}\nonumber\\
  &\qquad\qquad+\rt{\zkh{\nh v_\e:\nh v+\e^{\be-2}(\p_z v_\e) \d \p_z v+\e^2\nh w_\e \d \nh w}}\nonumber\\
  &\qquad\qquad+\rt{\zkh{\e^\be(\p_z w_\e)\p_z w+\nh \r_\e \d \nh \r+\e^{\ga-2}(\p_z\r_\e)\p_z\r}}\nonumber\\
  &\qquad=\norm{v_0}^2_2+\rt{\zkh{-(u_\e \d \n)v_\e \d v-\e^2(u_\e \d \n w_\e)w-(u_\e \d \n \r_\e)\r}}\nonumber\\
  &\qquad\qquad+\frac{\e^2}{2}\norm{w(r)}^2_2+\frac{\e^2}{2}\norm{w_0}^2_2+\e^2\rt{\xkh{\dk{\p_t v(x,y,\xi,t)}} \d \nh W_\e}\nonumber\\
  &\qquad\qquad+\norm{\r_0}^2_2+\rt{\xkh{v_\e \d \p_t v+\r_\e \p_t \r+\r_\e w-w_\e\r}}, \label{fl:rew}
\end{flalign}for any $r \in [0,t^*_0)$.
\end{proposition}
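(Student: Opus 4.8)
The plan is to track the time evolution of the mixed quantity
\begin{equation*}
  I(t)=\oo{\xkh{v_\e \d v+\e^2 w_\e w+\r_\e \r}}(t),
\end{equation*}
and to integrate its derivative over $(0,r)$. Because $(v_\e,w_\e,\r_\e)$ is only a weak solution, the contributions involving $\p_t v_\e$, $\p_t w_\e$ and $\p_t \r_\e$ cannot be written down directly; instead I would feed the solenoidal triple $(v,w,\r)$ into the weak formulation of the Definition, while the symmetric contributions $v_\e \d \p_t v$, $\e^2 w_\e \p_t w$ and $\r_\e \p_t \r$ are legitimate thanks to $(\p_t v,\p_t \r)\in L^2([0,t^*_0);L^2(\O))$ and $(v,\r)\in L^\infty([0,t^*_0);H^1(\O))$ from Proposition \ref{po:lgc}. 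The structural key is that $(v,w)$ is divergence free, $\nh \d v+\p_z w=0$, so it is an admissible test field and the pressure $p_\e$ drops out.

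First I would substitute for $\p_t v_\e$, $\e^2 \p_t w_\e$ and $\p_t \r_\e$ from the three evolution equations of (\ref{eq:ptv}), pair them with $v$, $w$ and $\r$, and integrate over $\O \times (0,r)$. Integrating the dissipative terms by parts in space yields precisely the cross terms $\nh v_\e:\nh v$, $\e^{\be-2}\p_z v_\e \d \p_z v$, $\e^2\nh w_\e \d \nh w$, $\e^\be \p_z w_\e \p_z w$, $\nh \r_\e \d \nh \r$ and $\e^{\ga-2}\p_z \r_\e \p_z \r$ appearing on the left of (\ref{fl:rew}); the advection terms assemble into $-(u_\e \d \n)v_\e \d v$, $-\e^2(u_\e \d \n w_\e)w$ and $-(u_\e \d \n \r_\e)\r$; the Coriolis contribution gives $-f_0(\vec{k}\times v_\e)\d v$; the buoyancy coupling produces $+\r_\e w$ while the density-stratification term produces $-w_\e \r$; and the two pressure contributions combine into $\oo{p_\e(\nh \d v+\p_z w)}=0$, which is exactly why $p_\e$ is absent from (\ref{fl:rew}).

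It then remains to recast $\e^2\int_0^r w_\e \p_t w$. Since $w$ is recovered from $v$ by the divergence-free and symmetry conditions, one has $\p_t w(x,y,z,t)=-\nh \d \dk{\p_t v(x,y,\xi,t)}$. Writing $W_\e=w_\e-w$, integrating by parts in the horizontal variables, and adding and subtracting $\nh w$, one obtains
\begin{equation*}
  \int_0^r\oo{w_\e \p_t w}\,dt-\int_0^r\oo{\xkh{\dk{\p_t v(x,y,\xi,t)}}\d \nh W_\e}\,dt
  =\int_0^r\frac12\frac{d}{dt}\norm{w}^2_2\,dt
  =\frac12\norm{w(r)}^2_2-\frac12\norm{w_0}^2_2.
\end{equation*}
Multiplying by $\e^2$ produces the terms $\frac{\e^2}{2}\norm{w(r)}^2_2$, $\frac{\e^2}{2}\norm{w_0}^2_2$ and $\e^2\int_0^r(\dk{\p_t v})\d \nh W_\e$ on the right of (\ref{fl:rew}). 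Finally, since both solutions share the data $(v_0,w_0,\r_0)$, the endpoint value at $t=0$ is $I(0)=\norm{v_0}^2_2+\e^2\norm{w_0}^2_2+\norm{\r_0}^2_2$, and after combining $\e^2\norm{w_0}^2_2$ with $-\tfrac{\e^2}{2}\norm{w_0}^2_2$ the surviving data terms are exactly $\norm{v_0}^2_2+\norm{\r_0}^2_2$, as in (\ref{fl:rew}).

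The genuine difficulty is the rigorous justification of these manipulations, since pairing a merely weak solution with the strong solution and integrating by parts in time is not valid as stated. I would handle it by approximating $(v,w,\r)$ with smooth, spatially periodic, solenoidal fields compactly supported in time in $[0,r)$, inserting them into the weak formulation of the Definition, and passing to the limit using the regularity of Proposition \ref{po:lgc} together with the energy bound (\ref{fl:vwt}); the endpoint value $I(r)$ is then recovered from $(v_\e,w_\e,\r_\e)\in C([0,\infty);L^2(\O))$. The transport and time-derivative pairings converge because $\p_t v,\p_t \r\in L^2_tL^2_x$ and $\nh v,\nh \r\in L^2_tH^1_x$ dominate the products against the weak solution. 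This is precisely the density and time-mollification argument of Li-Titi \cite{lt2019} and Bardos \et \cite{ba2013}, where the technical details are carried out.
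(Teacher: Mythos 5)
Your proposal is correct and coincides with the paper's own treatment: the paper gives no detailed computation, stating only that (\ref{fl:rew}) is formally obtained by testing the scaled system (\ref{eq:ptv}) with $(v,w,\r)$ and referring to Li-Titi \cite{lt2019} and Bardos \et\cite{ba2013} for the rigorous justification of these manipulations, which are precisely the two components of your argument. Your formal bookkeeping — cancellation of $p_\e$ against the solenoidal field $(v,w)$, the signs of the Coriolis and buoyancy/stratification couplings, and the conversion of $\e^2\rt{w_\e\p_z w}$-type pairing $\e^2\rt{w_\e\p_t w}$ into the terms $\frac{\e^2}{2}\norm{w(r)}^2_2$, $\frac{\e^2}{2}\norm{w_0}^2_2$ and $\e^2\rt{\xkh{\dk{\p_t v}}\d\nh W_\e}$ via $\p_t w=-\nh\d\dk{\p_t v}$ and horizontal integration by parts — reproduces (\ref{fl:rew}) term by term.
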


With the help of Proposition \ref{po:vew}, we can estimate the difference function $(V_\e,W_\e,\g_\e)=(v_\e-v,w_\e-w,\r_\e-\r)$. Before this, we present a lemma (see \cite{ct2003}), which will be frequently used later.
\begin{lemma} \label{le:phi}
The following inequalities hold
\begin{flalign*}
  \mm{&\xkh{\dz{\varphi(x,y,z)}} \xkh{\dz{\psi(x,y,z)\phi(x,y,z)}}}\\
  &\leq C\norm{\varphi}^{1/2}_2 \xkh{\norm{\varphi}^{1/2}_2+\norm{\nh\varphi}^{1/2}_2}
  \norm{\psi}^{1/2}_2 \xkh{\norm{\psi}^{1/2}_2+\norm{\nh\psi}^{1/2}_2} \norm{\phi}_2,
\end{flalign*}
\begin{flalign*}
  \mm{&\xkh{\dz{\varphi(x,y,z)}} \xkh{\dz{\psi(x,y,z)\phi(x,y,z)}}}\\
  &\leq C\norm{\psi}^{1/2}_2 \xkh{\norm{\psi}^{1/2}_2+\norm{\nh\psi}^{1/2}_2}
  \norm{\phi}^{1/2}_2 \xkh{\norm{\phi}^{1/2}_2+\norm{\nh\phi}^{1/2}_2} \norm{\varphi}_2,
\end{flalign*}for every~$\varphi,\psi,\phi$~such that the right-hand sides make sense and are finite,~where~$C$~is a positive constant.
\end{lemma}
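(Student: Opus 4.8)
The plan is to reduce both inequalities to the two-dimensional Gagliardo--Nirenberg (Ladyzhenskaya) inequality $\norm{f}_{L^4(M)} \leq C\norm{f}^{1/2}_{L^2(M)}\norm{f}^{1/2}_{H^1(M)}$ on the horizontal domain $M$, after using the Cauchy--Schwarz inequality in the vertical variable to collapse the two vertical integrals into a product of functions of $(x,y)$ alone.

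First I would record the bound $\dz{\abs{\psi}\abs{\phi}} \leq \tilde\psi\,\tilde\phi$, where $\tilde\psi(x,y):=\xkh{\dz{\abs{\psi}^2}}^{1/2}$ and $\tilde\phi(x,y):=\xkh{\dz{\abs{\phi}^2}}^{1/2}$, and I would set $\tilde\varphi(x,y):=\dz{\abs{\varphi}}$. By Fubini's theorem $\norm{\tilde\psi}_{L^2(M)}=\norm{\psi}_2$ and $\norm{\tilde\phi}_{L^2(M)}=\norm{\phi}_2$, while Cauchy--Schwarz in $z$ gives $\norm{\tilde\varphi}_{L^2(M)} \leq C\norm{\varphi}_2$. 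The key bookkeeping is that each of these horizontal functions lies in $H^1(M)$, with its horizontal gradient controlled by the corresponding norm over $\O$: differentiating under the integral sign and using $\abs{\nh\abs{\varphi}}=\abs{\nh\varphi}$ a.e. yields $\norm{\nh\tilde\varphi}_{L^2(M)} \leq C\norm{\nh\varphi}_2$, while the chain rule through the square root combined with Cauchy--Schwarz in $z$ gives $\norm{\nh\tilde\psi}_{L^2(M)} \leq \norm{\nh\psi}_2$ and likewise for $\tilde\phi$.

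With these facts in hand, for the first inequality I would apply H\"older's inequality on $M$ with exponents $(4,4,2)$ to $\mm{\tilde\varphi\,\tilde\psi\,\tilde\phi}$, bounding $\norm{\tilde\varphi}_{L^4(M)}$ and $\norm{\tilde\psi}_{L^4(M)}$ by the Ladyzhenskaya inequality and keeping $\norm{\tilde\phi}_{L^2(M)}=\norm{\phi}_2$; the elementary estimate $(a+b)^{1/2}\leq a^{1/2}+b^{1/2}$ then converts the $H^1(M)$ factors into the stated products $\norm{\varphi}^{1/2}_2\xkh{\norm{\varphi}^{1/2}_2+\norm{\nh\varphi}^{1/2}_2}$ and $\norm{\psi}^{1/2}_2\xkh{\norm{\psi}^{1/2}_2+\norm{\nh\psi}^{1/2}_2}$. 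The second inequality is identical except that H\"older on $M$ is applied with exponents $(2,4,4)$, so that the plain factor $\norm{\tilde\varphi}_{L^2(M)} \leq C\norm{\varphi}_2$ is placed on $\tilde\varphi$ and the Ladyzhenskaya factors are placed on $\tilde\psi$ and $\tilde\phi$.

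The H\"older bookkeeping and the tracking of constants are routine; the only genuinely delicate point is the horizontal gradient bound for the vertically integrated functions, since $\tilde\psi$ and $\tilde\phi$ involve a square root that is not differentiable where $\psi$ or $\phi$ vanishes. I would resolve this by first proving the inequalities for smooth $\varphi,\psi,\phi$, regularizing $\abs{\psi}$ and $\abs{\phi}$ by $\sqrt{\abs{\psi}^2+\delta}$ and letting $\delta\to 0$ to justify differentiation under the integral sign, and then extending to general $\varphi,\psi,\phi$ with finite right-hand side by a standard density argument.
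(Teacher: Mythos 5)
Your proof is correct, and it is essentially the standard argument behind this lemma: the paper itself states the lemma without proof, citing Cao--Titi \cite{ct2003}, and the proof there proceeds exactly as you propose --- Cauchy--Schwarz in $z$ to reduce to the horizontal functions $\tilde\varphi,\tilde\psi,\tilde\phi$ on $M$, Minkowski/Cauchy--Schwarz to control their horizontal gradients, H\"older on $M$ with exponents $(4,4,2)$ (resp.\ $(2,4,4)$), and the two-dimensional Ladyzhenskaya inequality. Your handling of the square-root differentiability via regularization is a sound way to make the gradient bound for $\tilde\psi,\tilde\phi$ rigorous, so there is nothing to correct.
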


\begin{proposition}\label{po:eta}
Let~$(V_\e,W_\e,\g_\e)=(v_\e-v,w_\e-w,\r_\e-\r)$.~Under the same assumptions as in Proposition \ref{po:vew}, the following estimate holds
\begin{flalign*}
  &\sup_{0 \leq s \leq t}\xkh{\norm{(V_\e,\e W_\e,\g_\e)}^2_2}(s)+\ds{\xkh{\norm{\nh V_\e}^2_2+\e^{\be-2}\norm{\p_z V_\e}^2_2}}\\
  &\qquad\quad+\ds{\xkh{\e^2\norm{\nh W_\e}^2_2+\norm{\nh \g_\e}^2_2+\e^\be\norm{\p_z W_\e}^2_2+\e^{\ga-2}\norm{\p_z \g_\e}^2_2}}\\
  &\qquad\leq C(t+1)e^{C(t+1)}\zkh{\e^2+\e^{\be-2}+\e^{\ga-2}+\e^2\xkh{\norm{v_0}^2_2+\e^2\norm{w_0}^2_2+\norm{\r_0}^2_2}^2},
\end{flalign*}for any $t \in [0,t^*_0)$, where $C$ is a positive constant that does not depend on $\e$.
\end{proposition}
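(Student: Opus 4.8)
The plan is to run a relative-energy estimate comparing the weak solution $(v_\e,w_\e,\r_\e)$ with the strong solution $(v,w,\r)$ of the limiting system, closed by Gr\"onwall's inequality. The algebraic starting point is the polarization identity
\[
\frac{1}{2}\norm{(V_\e,\e W_\e,\g_\e)}^2_2(r)=\frac{1}{2}\xkh{\norm{v_\e}^2_2+\e^2\norm{w_\e}^2_2+\norm{\r_\e}^2_2}(r)-\oo{\xkh{v_\e\d v+\e^2w_\e w+\r_\e\r}}(r)+\frac{1}{2}\xkh{\norm{v}^2_2+\e^2\norm{w}^2_2+\norm{\r}^2_2}(r).
\]
I would bound the first bracket by the energy inequality (\ref{fl:vwt}), rewrite the cross term by the integral equality (\ref{fl:rew}) of Proposition \ref{po:vew}, and control the last bracket by the energy equality for the limiting system obtained by testing (\ref{eq:ptl}) against $(v,w,\r)$, which is legitimate since $(v,\r)$ is a strong solution. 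The useful feature of this combination is that all initial-data contributions cancel identically.

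To recover the difference dissipation on the left-hand side, I would substitute the limiting equations (\ref{eq:ptl}) for $\p_t v$ and $\p_t\r$ into the terms $\rt{(v_\e\d\p_t v+\r_\e\p_t\r)}$ appearing on the right of (\ref{fl:rew}). Each substitution generates, through $\oo{v_\e\d\la_h v}=-\oo{\nh v_\e:\nh v}$, a second copy of the corresponding horizontal cross-dissipation, so that together with the cross-dissipation already present in (\ref{fl:rew}) the horizontal terms complete the squares $-\norm{\nh V_\e}^2_2$ and $-\norm{\nh\g_\e}^2_2$. The rotation terms cancel by antisymmetry of $\vec{k}\times\d$, and, crucially, the buoyancy term $-\r$ in the $w_\e$-equation and the stratification term $+w_\e$ in the $\r_\e$-equation combine so that all linear coupling between $W_\e$ and $\g_\e$ disappears. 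The vertical-viscosity cross terms $\e^{\be-2}(\p_z v_\e)\d\p_z v$, $\e^{\ga-2}(\p_z\r_\e)\p_z\r$ and the $\e^2$-weighted $w$-cross terms have no second copy, since the limiting system carries neither vertical dissipation nor a $w$-evolution; these I would split by Young's inequality, absorbing fractions of $\e^{\be-2}\norm{\p_z V_\e}^2_2$, $\e^{\ga-2}\norm{\p_z\g_\e}^2_2$, $\e^2\norm{\nh W_\e}^2_2$, $\e^\be\norm{\p_z W_\e}^2_2$ into the dissipation and leaving genuine errors of sizes $\e^{\be-2}\norm{\p_z v}^2_2$, $\e^{\ga-2}\norm{\p_z\r}^2_2$, $\e^2\norm{\nh w}^2_2$, all controlled by Proposition \ref{po:lgc}. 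After integration in time these furnish the $\e^2+\e^{\be-2}+\e^{\ga-2}$ part of the bound, the factor $(t+1)$ arising because $\sup_s\norm{(v,\r)}_{H^1}\leq C$ is integrated over $[0,r]$. The explicit term $\e^2\rt{(\dk{\p_t v})\d\nh W_\e}$ is handled the same way, its gradient absorbed and the remainder $\e^2\norm{\p_t v}^2_2$ bounded by (\ref{fl:VTC}).

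The main obstacle is the nonlinear part. After using $\n\d u_\e=0$ and $\n\d U_\e=0$ to discard the self-interaction integrals, the surviving nonlinearities collapse to $\oo{(U_\e\d\n)v\d V_\e}$ and $\oo{(U_\e\d\n)\r\,\g_\e}$, with $U_\e=(V_\e,W_\e)$ and the vertical component recovered from the constraint $W_\e=-\dk{\nh\d V_\e}$. The delicate pieces are $\oo{W_\e(\p_z v)\d V_\e}$ and $\oo{W_\e(\p_z\r)\g_\e}$, because only horizontal gradients sit in the dissipation (the vertical dissipation being $\e$-degenerate). Here I would write $W_\e$ as the vertical integral of $\nh\d V_\e$ and invoke the anisotropic estimate of Lemma \ref{le:phi} with $\varphi=\abs{\nh V_\e}$, the smooth factor $\p_z v$ or $\p_z\r$ playing the role of $\psi$; this bounds the terms by a constant times $\norm{\p_z v}^{1/2}_2\xkh{\norm{\p_z v}^{1/2}_2+\norm{\nh\p_z v}^{1/2}_2}\norm{V_\e}^{1/2}_2\xkh{\norm{V_\e}^{1/2}_2+\norm{\nh V_\e}^{1/2}_2}\norm{\nh V_\e}_2$ and analogously for $\g_\e$. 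Young's inequality then absorbs the top-order factor $\norm{\nh V_\e}^{3/2}_2$ into the dissipation, leaving a Gr\"onwall term $g(s)\norm{(V_\e,\g_\e)}^2_2$ with $g\in L^1(0,t^*_0)$ by (\ref{fl:VTC}). The one remaining term, the $\e^2$-weighted vertical nonlinearity $\e^2\rt{(u_\e\d\n w_\e)w}$, I would treat using the a priori bound $\e\norm{w_\e}_2\leq\xkh{\norm{v_0}^2_2+\e^2\norm{w_0}^2_2+\norm{\r_0}^2_2}^{1/2}$ from (\ref{fl:vwt}) together with Lemma \ref{le:phi}; it is this term that produces the quartic-in-data contribution $\e^2\xkh{\norm{v_0}^2_2+\e^2\norm{w_0}^2_2+\norm{\r_0}^2_2}^2$.

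Collecting everything yields, for $r\in[0,t]$, an inequality of the form
\[
\norm{(V_\e,\e W_\e,\g_\e)}^2_2(r)+\int^r_0\ma{D}_\e\,ds\leq C(t+1)\zkh{\e^2+\e^{\be-2}+\e^{\ga-2}+\e^2\xkh{\norm{v_0}^2_2+\e^2\norm{w_0}^2_2+\norm{\r_0}^2_2}^2}+\int^r_0 g(s)\norm{(V_\e,\e W_\e,\g_\e)}^2_2\,ds,
\]
where $\ma{D}_\e$ is the full difference dissipation and $\int^t_0 g\,ds\leq C(t+1)$ by (\ref{fl:VTC}). Gr\"onwall's inequality then supplies the factor $e^{C(t+1)}$, and taking the supremum over $r\in[0,t]$ gives exactly the asserted estimate. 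Throughout, the crux is to keep every absorbed nonlinear and vertical-viscosity term strictly beneath the available --- and partly $\e$-degenerate --- dissipation coefficients, which is precisely what Lemma \ref{le:phi} and the careful tracking of the powers of $\e$ accomplish.
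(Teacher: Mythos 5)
Your proposal is correct and takes essentially the same route as the paper: your polarization identity plus the substitution of the limiting equations for $\p_t v,\p_t \r$ is exactly the paper's device of subtracting the sum of (\ref{fl:rew}) and the identity obtained by testing (\ref{eq:ptl}) with $(v_\e,w_\e,\r_\e)$ from the sum of the two energy (in)equalities, yielding the same difference inequality. Your handling of the surviving terms --- Lemma \ref{le:phi} with the derivative-free norm placed on $\abs{\nh V_\e}$ for the anisotropic nonlinearities, Young absorption for the $\e$-degenerate vertical cross terms, the quartic-in-data bound for $\e^2\rt{(u_\e \d \n w_\e)w}$ via (\ref{fl:vwt}), and Gr\"onwall with an $L^1$-in-time coefficient controlled by (\ref{fl:VTC}) --- matches the paper's estimates of $\ma{R}_1$ through $\ma{R}_6$ term by term.
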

\begin{proof}[Proof.]
Multiplying the first three equation in system (\ref{eq:ptl}) by $v_\e$, $w_\e$ and $\r_\e$ respectively,~and integrating over $\O\times(0,r)$, then it follows from integration by parts that
\begin{flalign}
  &\rt{\xkh{v_\e \d \p_t v+\r_\e \p_t \r+\nh v_\e:\nh v+\nh \r_\e \d \nh \r}}\nonumber\\
  &\qquad=\rt{\zkh{\r w_\e-w\r_\e-(u \d \n)v \d v_\e-(u \d \n \r)\r_\e}}\nonumber\\
  &\qquad\quad+\rt{\zkh{-f_0(\vec{k} \times v) \d v_\e}}.\label{fl:udn}
\end{flalign}Replacing $(v_\e,w_\e,\r_\e)$ with $(v,w,\r)$, a similar argument gives
\begin{flalign}
  &\frac{1}{2}\xkh{\norm{v(r)}^2_2+\norm{\r(r)}^2_2}+\dt{\xkh{\norm{\nh v}^2_2+\norm{\nh \r}^2_2}}\nonumber\\
  &\qquad=\frac{1}{2}\xkh{\norm{v_0}^2_2+\norm{\r_0}^2_2},\label{fl:rtw}
\end{flalign}note that we have used the following fact
\begin{equation*}
  \rt{f_0(\vec{k} \times v) \d v}=0.
\end{equation*}Thanks to Remark \ref{re:lh}, the weak solution $(v_\e,w_\e,\r_\e)$ of the system (\ref{eq:ptv}) satisfies the following energy inequality
\begin{flalign}
  &\frac{1}{2}\xkh{\norm{v_\e(r)}^2_2+\e^2\norm{w_\e(r)}^2_2+\norm{\r_\e(r)}^2_2}\nonumber\\
  &\qquad\quad+\dt{\xkh{\norm{\nh v_\e}^2_2+\e^{\be-2}\norm{\p_z v_\e}^2_2+\e^2\norm{\nh w_\e}^2_2}}\nonumber\\
  &\qquad\quad+\dt{\xkh{\e^\be\norm{\p_z w_\e}^2_2+\norm{\nh \r_\e}^2_2+\e^{\ga-2}\norm{\p_z \r_\e}^2_2}}\nonumber\\
  &\qquad\leq\frac{1}{2}\xkh{\norm{v_0}^2_2+\e^2\norm{w_0}^2_2+\norm{\r_0}^2_2}.\label{fl:e22}
\end{flalign}Subtracting the sum of (\ref{fl:rew}) and (\ref{fl:udn}) from the sum of (\ref{fl:rtw}) and (\ref{fl:e22}), we have
\begin{flalign}
  &\frac{1}{2}\xkh{\norm{V_\e(r)}^2_2+\e^2\norm{W_\e(r)}^2_2+\norm{\g_\e(r)}^2_2}
  +\dt{\xkh{\norm{\nh V_\e}^2_2+\e^{\be-2}\norm{\p_z V_\e}^2_2}}\nonumber\\
  &\qquad\quad+\dt{\xkh{\e^2\norm{\nh W_\e}^2_2+\norm{\nh \g_\e}^2_2
  +\e^\be\norm{\p_z W_\e}^2_2+\e^{\ga-2}\norm{\p_z \g_\e}^2_2}}\nonumber\\
  &\qquad\leq\rt{\zkh{(u_\e \d \n\r_\e)\r+(u \d \n\r)\r_\e}}\nonumber\\
  &\qquad\quad+\rt{\zkh{(u_\e \d \n)v_\e \d v+(u \d \n)v \d v_\e}}\nonumber\\
  &\qquad\quad+\e^2\rt{\zkh{-\xkh{\dk{\p_t v(x,y,\xi,t)}} \d \nh W_\e}}\nonumber\\
  &\qquad\quad+\rt{\zkh{-\e^2\nh W_\e \d \nh w-\e^{\ga-2}(\p_z\g_\e)\p_z \r}}\nonumber\\
  &\qquad\quad+\rt{\zkh{-\e^\be(\p_z W_\e)\p_z w-\e^{\be-2}(\p_z V_\e)\d\p_z v}}\nonumber\\
  &\qquad\quad+\e^2\rt{(u_\e \d \n w_\e)w}\nonumber\\
  &\qquad=:\ma{R}_1+\ma{R}_2+\ma{R}_3+\ma{R}_4+\ma{R}_5+\ma{R}_6.\label{fl:Ver}
\end{flalign}In order to estimate the first integral term $\mathcal{R}_1$ on the right-hand side of (\ref{fl:Ver}), we use the divergence-free condition and integration by parts to obtain
\begin{flalign}
  \ma{R}_1:&=\rt{\zkh{(u_\e \d \n\r_\e)\r+(u \d \n\r)\r_\e}}\nonumber\\
  &=\rt{\zkh{(u_\e \d \n\r_\e)\r-(u \d \n \r_\e)\r}}\nonumber\\
  &=\rt{\zkh{\xkh{(u_\e-u) \d \n\g_\e}\r}}\nonumber\\
  &=\rt{\zkh{\xkh{V_\e \d \nh\g_\e}\r+W_\e \xkh{\p_z \g_\e}\r}}\nonumber\\
  &=:\mathcal{R}_{11}+\mathcal{R}_{12}.\label{fl:ued}
\end{flalign}For the integral term $\ma{R}_{11}$ on the right-hand side of (\ref{fl:ued}), noting that the fact that $|\r(z)| \leq \frac{1}{2}\dz{|\r|}+\dz{|\p_{z}\r|}$, and applying Lemma \ref{le:phi} and Young inequality, we have
\begin{flalign}
  \ma{R}_{11}:&=\rt{\xkh{V_\e \d \nh\g_\e}\r} \leq \rt{|\r||V_\e||\nh\g_\e|}\nonumber\\
  &\leq \dt{\mm{\xkh{\dz{(|\r|+|\p_{z}\r|)}}\xkh{\dz{|V_\e||\nh\g_\e|}}}}\nonumber\\
  &\leq C\dt{\xkh{\norm{\p_z\r}_2+\norm{\p_z\r}^{1/2}_2\norm{\nh\p_z\r}^{1/2}_2}
  \norm{V_\e}^{1/2}_2\norm{\nh V_\e}^{1/2}_2\norm{\nh \g_\e}_2}\nonumber\\
  &\quad+C\dt{\norm{V_\e}_2\norm{\nh \g_\e}_2\xkh{\norm{\p_z\r}_2+\norm{\p_z\r}^{1/2}_2\norm{\nh\p_z\r}^{1/2}_2}}\nonumber\\
  &\quad+C\dt{\xkh{\norm{\r}_2+\norm{\r}^{1/2}_2\norm{\nh\r}^{1/2}_2}
  \xkh{\norm{V_\e}_2+\norm{V_\e}^{1/2}_2\norm{\nh V_\e}^{1/2}_2}\norm{\nh \g_\e}_2}\nonumber\\
  &\leq C\dt{\xkh{\norm{\p_z\r}^2_2+\norm{\p_z\r}_2\norm{\nh\p_z\r}_2+\norm{\p_z\r}^4_2
  +\norm{\p_z\r}^2_2\norm{\nh\p_z\r}^2_2}\norm{V_\e}^2_2}\nonumber\\
  &\quad+C\dt{\xkh{\norm{\r}^2_2+\norm{\r}_2\norm{\nh\r}_2+\norm{\r}^4_2
  +\norm{\r}^2_2\norm{\nh\r}^2_2}\norm{V_\e}^2_2}\nonumber\\
  &\quad+\frac{1}{24}\dt{\xkh{\norm{\nh V_\e}^2_2+\norm{\nh \g_\e}^2_2}}.\label{fl:R11}
\end{flalign}Next we need to estimate the integral term $\ma{R}_{12}$ on the right-hand side of (\ref{fl:ued}). Using the same method as the first integral term on the right-hand side of (\ref{fl:ued}), this term can be bounded as
\begin{flalign}
  \ma{R}_{12}:&=\rt{W_\e \xkh{\p_z \g_\e}\r}\nonumber\\
  &=\rt{\zkh{(\nh \d V_\e)\g_\e \r+\g_\e (\p_z \r)\dk{\nh \d V_\e}}}\nonumber\\
  &\leq \dt{\mm{\xkh{\dz{(|\r|+|\p_{z}\r|)}}\xkh{\dz{|\g_\e||\nh V_\e|}}}}\nonumber\\
  &\quad+\dt{\mm{\xkh{\dz{|\nh V_\e|}}\xkh{\dz{|\g_\e||\p_z \r|}}}}\nonumber\\
  &\leq C\dt{\xkh{\norm{\p_z\r}^2_2+\norm{\p_z\r}_2\norm{\nh\p_z\r}_2+\norm{\p_z\r}^4_2
  +\norm{\p_z\r}^2_2\norm{\nh\p_z\r}^2_2}\norm{\g_\e}^2_2}\nonumber\\
  &\quad+C\dt{\xkh{\norm{\r}^2_2+\norm{\r}_2\norm{\nh\r}_2+\norm{\r}^4_2
  +\norm{\r}^2_2\norm{\nh\r}^2_2}\norm{\g_\e}^2_2}\nonumber\\
  &\quad+\frac{1}{24}\dt{\xkh{\norm{\nh V_\e}^2_2+\norm{\nh \g_\e}^2_2}}.\label{fl:R12}
\end{flalign}Adding (\ref{fl:R11}) and (\ref{fl:R12}) gives
\begin{flalign*}
  \ma{R}_1&\leq C\dt{\xkh{\norm{\p_z\r}^2_2+\norm{\p_z\r}_2\norm{\nh\p_z\r}_2}\xkh{\norm{V_\e}^2_2+\norm{\g_\e}^2_2}}\\
  &\quad+C\dt{\xkh{\norm{\p_z\r}^4_2+\norm{\p_z\r}^2_2\norm{\nh\p_z\r}^2_2}\xkh{\norm{V_\e}^2_2+\norm{\g_\e}^2_2}}\\
  &\quad+C\dt{\xkh{\norm{\r}^2_2+\norm{\r}_2\norm{\nh\r}_2}\xkh{\norm{V_\e}^2_2+\norm{\g_\e}^2_2}}\\
  &\quad+C\dt{\xkh{\norm{\r}^4_2+\norm{\r}^2_2\norm{\nh\r}^2_2}\xkh{\norm{V_\e}^2_2+\norm{\g_\e}^2_2}}\\
  &\quad+\frac{1}{12}\dt{\xkh{\norm{\nh V_\e}^2_2+\norm{\nh \g_\e}^2_2}}.
\end{flalign*}The similar argument as the integral term $\ma{R}_1$ yields
\begin{flalign*}
  \ma{R}_2:&=\rt{\zkh{(u_\e \d \n)v_\e \d v+(u \d \n)v \d v_\e}}\\
  &=\rt{\zkh{(V_\e \d \nh)V_\e \d v+(\nh \d V_\e)V_\e \d v+(V_\e \d \p_z v)\dk{\nh \d V_\e}}}\\
  &\leq C\dt{\xkh{\norm{v}^2_2+\norm{v}_2\norm{\nh v}_2+\norm{v}^4_2+\norm{v}^2_2\norm{\nh v}^2_2}\norm{V_\e}^2_2}
  +\frac{1}{12}\dt{\norm{\nh V_\e}^2_2}\\
  &\quad+C\dt{\xkh{\norm{\p_z v}^2_2+\norm{\p_z v}_2\norm{\nh \p_z v}_2
  +\norm{\p_z v}^4_2+\norm{\p_z v}^2_2\norm{\nh \p_z v}^2_2}\norm{V_\e}^2_2}.
\end{flalign*}By virtue of the H\"{o}lder inequality and Young inequality, the integral terms $\ma{R}_3$, $\ma{R}_4$ and $\ma{R}_5$ on the right-hand side of (\ref{fl:Ver}) can be estimated as
\begin{flalign*}
  \ma{R}_3:&=\e^2\rt{\zkh{-\xkh{\dk{\p_t v(x,y,\xi,t)}} \d \nh W_\e}}\\
  &\leq\e^2\dt{\mm{\xkh{\dz{|\p_t v|}}\xkh{\dz{|\nh W_\e|}}}}\\
  &\leq C\e^2\dt{\norm{\p_t v}^2_2}+\frac{1}{12}\dt{\e^2\norm{\nh W_\e}^2_2},
\end{flalign*}
\begin{flalign*}
  \ma{R}_4:&=\rt{\zkh{-\e^2 \nh W_\e\d\nh w-\e^{\ga-2}(\p_z \g_\e)\p_z T}}\\
  &\leq C\e^2\dt{\norm{\nh w}^2_2}+\frac{1}{12}\dt{\e^2\norm{\nh W_\e}^2_2}\\
  &\quad+C\e^{\ga-2}\dt{\norm{\p_z T}^2_2}+\frac{1}{12}\dt{\e^{\ga-2}\norm{\p_z \g_\e}^2_2}\\
  &\leq C\e^2\rt{|\nh(\nh \d v)|^2}+C\e^{\ga-2}\dt{\norm{\p_z T}^2_2}\\
  &\quad+\frac{1}{12}\dt{\xkh{\e^2\norm{\nh W_\e}^2_2+\e^{\ga-2}\norm{\p_z \g_\e}^2_2}}\\
  &\leq C\e^2\dt{\norm{\nh^2 v}^2_2}+C\e^{\ga-2}\dt{\norm{\p_z T}^2_2}\\
  &\quad+\frac{1}{12}\dt{\xkh{\e^2\norm{\nh W_\e}^2_2+\e^{\ga-2}\norm{\p_z \g_\e}^2_2}}
\end{flalign*}and
\begin{flalign*}
  \ma{R}_5:&=\rt{\zkh{-\e^\be(\p_z W_\e)\p_z w-\e^{\be-2}(\p_z V_\e)\d\p_z v}}\\
  &\leq C\e^\be\dt{\norm{\p_z w}^2_2}+\frac{1}{12}\dt{\e^\be\norm{\p_z W_\e}^2_2}\\
  &\quad+C\e^{\be-2}\dt{\norm{\p_z v}^2_2}+\frac{1}{12}\dt{\e^{\be-2}\norm{\p_z V_\e}^2_2}\\
  &\leq C\e^\be\dt{\norm{\nh v}^2_2}+C\e^{\be-2}\dt{\norm{\n v}^2_2}\\
  &\quad+\frac{1}{12}\dt{\xkh{\e^\be\norm{\p_z W_\e}^2_2+\e^{\be-2}\norm{\p_z V_\e}^2_2}},
\end{flalign*}respectively, noting that the divergence-free condition is used. Finally, it remains to deal with the last integral term $\ma{R}_6$ on the right-hand side of (\ref{fl:Ver}). Thanks to the Lemma \ref{le:phi} and Young inequality, we obtain
\begin{flalign*}
  \ma{R}_6:&=\e^2\rt{(u_\e \d \n w_\e)w}\\
  &=\e^2\rt{\zkh{w_\e(\nh \d V_\e)\dk{\nh \d v}-v_\e \d \nh W_\e\dk{\nh \d v}}}\\
  &\leq\e^2\dt{\mm{\xkh{\dz{|\nh v|}}\xkh{\dz{|w_\e||\nh V_\e|}}}}\\
  &\quad+\e^2\dt{\mm{\xkh{\dz{|\nh v|}}\xkh{\dz{|v_\e||\nh W_\e|}}}}\\
  &\leq C\e^2\norm{\nh v}^{1/2}_2\norm{\nh^2 v}^{1/2}_2\norm{w_\e}^{1/2}_2
  \xkh{\norm{w_\e}^{1/2}_2+\norm{\nh w_\e}^{1/2}_2}\norm{\nh V_\e}_2\\
  &\quad+C\e^2\norm{\nh v}^{1/2}_2\norm{\nh^2 v}^{1/2}_2\norm{v_\e}^{1/2}_2
  \xkh{\norm{v_\e}^{1/2}_2+\norm{\nh v_\e}^{1/2}_2}\norm{\nh W_\e}_2\\
  &\leq C\e^2\dt{\xkh{\norm{\nh v}^2_2\norm{\nh^2 v}^2_2
  +\e^4\norm{w_\e}^4_2+\e^4\norm{w_\e}^2_2\norm{\nh w_\e}^2_2}}\\
  &\quad+C\e^2\dt{\xkh{\norm{v_\e}^4_2+\norm{v_\e}^2_2\norm{\nh v_\e}^2_2}}
  +\frac{1}{12}\dt{\xkh{\norm{\nh V_\e}^2_2+\e^2\norm{\nh W_\e}^2_2}}.
\end{flalign*}Combining the estimates for $\ma{R}_1$, $\ma{R}_2$, $\ma{R}_3$, $\ma{R}_4$, $\ma{R}_5$ and $\ma{R}_6$, we reach
\begin{flalign*}
  \ma{F}(t):&=\xkh{\norm{V_\e(t)}^2_2+\e^2\norm{W_\e(t)}^2_2+\norm{\g_\e(t)}^2_2}+\ds{\xkh{\norm{\nh V_\e}^2_2+\e^{\be-2}\norm{\p_z V_\e}^2_2}}\\
  &\quad+\ds{\xkh{\e^2\norm{\nh W_\e}^2_2+\norm{\nh \g_\e}^2_2+\e^\be\norm{\p_z W_\e}^2_2+\e^{\ga-2}\norm{\p_z \g_\e}^2_2}}\\
  &\leq C\ds{\xkh{\norm{\p_z\r}^2_2+\norm{\p_z\r}_2\norm{\nh\p_z\r}_2+\norm{\p_z\r}^4_2+\norm{\p_z\r}^2_2\norm{\nh\p_z\r}^2_2}\norm{V_\e}^2_2}\\
  &\quad+C\ds{\xkh{\norm{\p_z\r}^2_2+\norm{\p_z\r}_2\norm{\nh\p_z\r}_2+\norm{\p_z\r}^4_2+\norm{\p_z\r}^2_2\norm{\nh\p_z\r}^2_2}\norm{\g_\e}^2_2}\\
  &\quad+C\ds{\xkh{\norm{\r}^2_2+\norm{\r}_2\norm{\nh\r}_2+\norm{\r}^4_2+\norm{\r}^2_2\norm{\nh\r}^2_2}\xkh{\norm{V_\e}^2_2+\norm{\g_\e}^2_2}}\\
  &\quad+C\ds{\xkh{\norm{\p_z v}^2_2+\norm{\p_z v}_2\norm{\nh\p_z v}_2+\norm{\p_z v}^4_2+\norm{\p_z v}^2_2\norm{\nh\p_z v}^2_2}\norm{V_\e}^2_2}\\
  &\quad+C\ds{\xkh{\norm{v}^2_2+\norm{v}_2\norm{\nh v}_2+\norm{v}^4_2+\norm{v}^2_2\norm{\nh v}^2_2}\norm{V_\e}^2_2}\\
  &\quad+C\e^2\ds{\xkh{\norm{\p_t v}^2_2+\norm{\nh^2 v}^2_2}}+C\e^{\ga-2}\ds{\norm{\p_z \r}^2_2}+C\e^\be\ds{\norm{\nh v}^2_2}\\
  &\quad+C\e^2\ds{\xkh{\norm{\nh v}^2_2\norm{\nh^2 v}^2_2+\e^4\norm{w_\e}^4_2+\e^4\norm{w_\e}^2_2\norm{\nh w_\e}^2_2}}\\
  &\quad+C\e^2\ds{\xkh{\norm{v_\e}^4_2+\norm{v_\e}^2_2\norm{\nh v_\e}^2_2}}+C\e^{\be-2}\ds{\norm{\n v}^2_2}=:\ma{G}(t),
\end{flalign*}for a.e. $t \in [0,t^*_0)$. Taking the derivative of $\ma{G}(t)$ with respect to $t$ leads to
\begin{flalign*}
  \ma{G}'(t)&=C\xkh{\norm{\p_z\r}^2_2+\norm{\p_z\r}_2\norm{\nh\p_z\r}_2+\norm{\p_z\r}^4_2+\norm{\p_z\r}^2_2\norm{\nh\p_z\r}^2_2}\xkh{\norm{V_\e}^2_2+\norm{\g_\e}^2_2}\\
  &\quad+C\xkh{\norm{\r}^2_2+\norm{\r}_2\norm{\nh\r}_2+\norm{\r}^4_2+\norm{\r}^2_2\norm{\nh\r}^2_2}\xkh{\norm{V_\e}^2_2+\norm{\g_\e}^2_2}\\
  &\quad+C\xkh{\norm{\p_z v}^2_2+\norm{\p_z v}_2\norm{\nh\p_z v}_2+\norm{\p_z v}^4_2+\norm{\p_z v}^2_2\norm{\nh\p_z v}^2_2}\norm{V_\e}^2_2\\
  &\quad+C\xkh{\norm{v}^2_2+\norm{v}_2\norm{\nh v}_2+\norm{v}^4_2+\norm{v}^2_2\norm{\nh v}^2_2}\norm{V_\e}^2_2\\
  &\quad+C\e^2\xkh{\norm{\p_t v}^2_2+\norm{\nh^2 v}^2_2}+C\e^{\ga-2}\norm{\p_z \r}^2_2+C\e^\be\norm{\nh v}^2_2\\
  &\quad+C\e^2\xkh{\norm{\nh v}^2_2\norm{\nh^2 v}^2_2+\e^4\norm{w_\e}^4_2+\e^4\norm{w_\e}^2_2\norm{\nh w_\e}^2_2}\\
  &\quad+C\e^2\xkh{\norm{v_\e}^4_2+\norm{v_\e}^2_2\norm{\nh v_\e}^2_2}+C\e^{\be-2}\norm{\n v}^2_2\\
  &\leq \Big[C\xkh{\norm{\p_z\r}^2_2+\norm{\p_z\r}_2\norm{\nh\p_z\r}_2+\norm{\p_z\r}^4_2+\norm{\p_z\r}^2_2\norm{\nh\p_z\r}^2_2}\\
  &\quad+C\xkh{\norm{\r}^2_2+\norm{\r}_2\norm{\nh\r}_2+\norm{\r}^4_2+\norm{\r}^2_2\norm{\nh\r}^2_2}\\
  &\quad+C\xkh{\norm{\p_z v}^2_2+\norm{\p_z v}_2\norm{\nh\p_z v}_2+\norm{\p_z v}^4_2+\norm{\p_z v}^2_2\norm{\nh\p_z v}^2_2}\\
  &\quad+C\xkh{\norm{v}^2_2+\norm{v}_2\norm{\nh v}_2+\norm{v}^4_2+\norm{v}^2_2\norm{\nh v}^2_2}\Big]\ma{G}(t)\\
  &\quad+C\e^2\xkh{\norm{\p_t v}^2_2+\norm{\nh^2 v}^2_2}+C\e^{\ga-2}\norm{\p_z \r}^2_2+C\e^\be\norm{\nh v}^2_2\\
  &\quad+C\e^2\xkh{\norm{\nh v}^2_2\norm{\nh^2 v}^2_2+\e^4\norm{w_\e}^4_2+\e^4\norm{w_\e}^2_2\norm{\nh w_\e}^2_2}\\
  &\quad+C\e^2\xkh{\norm{v_\e}^4_2+\norm{v_\e}^2_2\norm{\nh v_\e}^2_2}+C\e^{\be-2}\norm{\n v}^2_2,
\end{flalign*}where we have used the inequality $\ma{F}(t) \leq \ma{G}(t)$. Noting that the fact that $\mathcal{G}(0)=0$, and applying the Gronwall inequality to the above inequality, we obtain
\begin{flalign}
  \ma{F}(t)&\leq\exp\Bigg\{C\ds{\xkh{\norm{\p_z\r}^2_2+\norm{\p_z\r}_2\norm{\nh\p_z\r}_2+\norm{\p_z\r}^4_2+\norm{\p_z\r}^2_2\norm{\nh\p_z\r}^2_2}}\nonumber\\
  &\quad+C\ds{\xkh{\norm{\r}^2_2+\norm{\r}_2\norm{\nh\r}_2+\norm{\r}^4_2+\norm{\r}^2_2\norm{\nh\r}^2_2}}\nonumber\\
  &\quad+C\ds{\xkh{\norm{\p_z v}^2_2+\norm{\p_z v}_2\norm{\nh\p_z v}_2+\norm{\p_z v}^4_2+\norm{\p_z v}^2_2\norm{\nh\p_z v}^2_2}}\nonumber\\
  &\quad+C\ds{\xkh{\norm{v}^2_2+\norm{v}_2\norm{\nh v}_2+\norm{v}^4_2+\norm{v}^2_2\norm{\nh v}^2_2}}\Bigg\}\nonumber\\
  &\times\Bigg\{C\e^2\ds{\xkh{\norm{\p_t v}^2_2+\norm{\nh^2 v}^2_2}}+C\e^{\ga-2}\ds{\norm{\p_z \r}^2_2}+C\e^\be\ds{\norm{\nh v}^2_2}\nonumber\\
  &\quad+C\e^2\ds{\xkh{\norm{\nh v}^2_2\norm{\nh^2 v}^2_2+\e^4\norm{w_\e}^4_2+\e^4\norm{w_\e}^2_2\norm{\nh w_\e}^2_2}}\nonumber\\
  &\quad+C\e^2\ds{\xkh{\norm{v_\e}^4_2+\norm{v_\e}^2_2\norm{\nh v_\e}^2_2}}+C\e^{\be-2}\ds{\norm{\n v}^2_2}\Bigg\}.\label{fl:Ft}
\end{flalign}From (\ref{fl:VTC}) and (\ref{fl:e22}), it follows that
\begin{flalign*}
  &\xkh{\norm{V_\e(t)}^2_2+\e^2\norm{W_\e(t)}^2_2+\norm{\g_\e(t)}^2_2}+\ds{\xkh{\norm{\nh V_\e}^2_2+\e^{\be-2}\norm{\p_z V_\e}^2_2}}\\
  &\qquad\quad+\ds{\xkh{\e^2\norm{\nh W_\e}^2_2+\norm{\nh \g_\e}^2_2+\e^\be\norm{\p_z W_\e}^2_2+\e^{\ga-2}\norm{\p_z \g_\e}^2_2}}\\
  &\qquad\leq C(t+1)e^{C(t+1)}\zkh{\e^2+\e^{\be-2}+\e^{\ga-2}+\e^2\xkh{\norm{v_0}^2_2+\e^2\norm{w_0}^2_2+\norm{\r_0}^2_2}^2}.
\end{flalign*}This completes the proof of Proposition \ref{po:eta}.
\end{proof}

Based on Proposition \ref{po:eta}, we give the Proof of Theorem~\ref{th:VTC}.
\begin{proof}[Proof of Theorem~\ref{th:VTC}.]
By the Proposition \ref{po:eta}, we have the following estimate
\begin{flalign*}
  &\sup_{0 \leq t < t^*_0}\xkh{\norm{(V_\e,\e W_\e,\g_\e)}^2_2}(t)+\dto{\xkh{\norm{\nh V_\e}^2_2+\e^{\be-2}\norm{\p_z V_\e}^2_2}}\\
  &\qquad\quad+\dto{\xkh{\e^2\norm{\nh W_\e}^2_2+\norm{\nh \g_\e}^2_2+\e^\be\norm{\p_z W_\e}^2_2+\e^{\ga-2}\norm{\p_z \g_\e}^2_2}}\\
  &\qquad\leq C\e^\eta(t^*_0+1)e^{C(t^*_0+1)}\zkh{1+\xkh{\norm{v_0}^2_2+\norm{w_0}^2_2+\norm{\r_0}^2_2}^2},
\end{flalign*}where $\eta=\min\{2,\be-2,\ga-2\}$ with $2<\be,\ga<\infty$, and $t^*_0$ is the maximal existence time of local strong solution $(v,\r)$ to the system (\ref{eq:ptl}). Here $C$ is a positive constant that does not depend on $\e$. The above estimate implies that
\begin{gather*}
  (v_\e,\e w_\e,\r_\e) \rightarrow (v,0,\r),~in~L^{\infty}\xkh{[0,t^*_0);L^2(\O)},\\
  \xkh{\nh v_\e,\e^{(\be-2)/2}\p_z v_\e,\e \nh w_\e} \rightarrow (\nh v,0,0),~in~L^2\xkh{[0,t^*_0);L^2(\O)},\\
  \xkh{\nh \r_\e,\e^{\be/2}\p_z w_\e,\e^{(\ga-2)/2}\p_z \r_\e} \rightarrow (\nh \r,0,0),~in~L^2\xkh{[0,t^*_0);L^2(\O)}.
\end{gather*}Owing to $\nh v_\e \rightarrow \nh v~\textnormal{in}~L^2\xkh{[0,t^*_0);L^2(\O)}$, it deduces from the divergence-free condition that
\begin{equation*}
  w_\e \rightarrow w~in~L^2\xkh{[0,t^*_0);L^2(\O)}.
\end{equation*}Finally, it can easily be seen that the rate of convergence is of the order $O(\e^{\eta/2})$.
\end{proof}

\section{Strong convergence for $H^1$ initial data with additional regularity}
In this section, we study the strong convergence for the case of initial data $(v_0,\r_0) \in H^1(\O)$ with additional regularity $\p_z v_0 \in L^4(\O)$.~The following global existence result is due to Cao-Li-Titi\cite{cc2017}.
\begin{proposition}\label{po:J1t}
Assume that $(v_0,\r_0) \in H^1(\O) \cap L^\infty(\O)$ satisfying $\dz{\nh \d v_0}=0$, and that $\p_z v_0 \in L^q(\O)$ with $q \in (2,\infty)$. Then the local strong solution $(v,\r)$ of the system (\ref{eq:ptl}) subject to (1.5)-(1.7) can be extended uniquely to be a global one such that the following energy estimate holds
\begin{flalign}
  &\sup_{0 \leq s \leq t}\xkh{\norm{(v,\r)}^2_{H^1(\O)}}(s)+\ds{\norm{\nh v}^2_{H^1(\O)}}\nonumber\\
  &\qquad+\ds{\xkh{{\norm{\nh \r}^2_{H^1(\O)}}+\norm{(\p_t v,\p_t \r)}^2_{L^2(\O)}}} \leq \ma{J}_1(t),\label{fl:J1t}
\end{flalign}for any $t \in [0,\infty)$, where $\ma{J}_1(t)$ is a nonnegative continuously increasing function defined on $[0,\infty)$.
\end{proposition}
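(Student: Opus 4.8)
The plan is to upgrade the local strong solution furnished by Proposition~\ref{po:lgc} to a global one through global-in-time a priori estimates: once one bounds $\norm{(v,\r)}_{H^1(\O)}$ on an arbitrary interval $[0,t]$ by a finite, continuously increasing function of $t$ that does not depend on the length of the existence interval, the usual continuation argument excludes finite-time blow-up and the bounds are collected into $\ma{J}_1(t)$. First I would record the basic $L^2$ identity obtained by testing the first and third equations of (\ref{eq:ptl}) with $v$ and $\r$. The pressure is eliminated through the hydrostatic relation $\p_z p=\r$ together with incompressibility $\nh\d v=-\p_z w$: integrating by parts, $\int_\O\nh p\d v$ becomes $-\int_\O\r w$, which exactly cancels the contribution $\int_\O w\r$ coming from the stratification term in the $\r$-equation. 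Since the transport and Coriolis terms drop by the divergence-free condition and antisymmetry, one obtains $\sup_{[0,t]}\norm{(v,\r)}_2^2+\int_0^t\xkh{\norm{\nh v}_2^2+\norm{\nh\r}_2^2}$ controlled by the data, and an analogous $L^p$ argument propagates the $L^\infty$ bounds on $(v,\r)$ from the initial data.

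The heart of the matter is that (\ref{eq:ptl}) carries no vertical dissipation, so the vertical derivatives $\p_z v$ and $\p_z\r$ must be controlled through structure rather than a parabolic gain in $z$. My second step would therefore be an $L^q$ estimate on $\theta:=\p_z v$. Differentiating the momentum equation in $z$ and using $\p_z\nh p=\nh\p_z p=\nh\r$ converts the pressure gradient into the already controlled quantity $\nh\r$, giving
\begin{equation*}
  \p_t\theta-\la_h\theta+(v\d\nh)\theta+(\theta\d\nh)v+w\p_z\theta+(\p_z w)\theta+\nh\r+f_0\vec{k}\times\theta=0.
\end{equation*}
Testing with $\abs{\theta}^{q-2}\theta$ kills the Coriolis term and, by incompressibility, reduces the two vertical-transport contributions to a single manageable one: integrating $\int_\O w\p_z\theta\d\abs{\theta}^{q-2}\theta$ by parts in $z$ and combining it with $\int_\O(\p_z w)\abs{\theta}^q$ leaves a multiple of $\int_\O(\p_z w)\abs{\theta}^q=-\int_\O(\nh\d v)\abs{\theta}^q$, so the a priori dangerous second vertical derivative $w\p_{zz}v$ disappears entirely. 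What remains, namely this term, the stretching term $\int_\O(\theta\d\nh)v\d\abs{\theta}^{q-2}\theta$, and the forcing $\int_\O\nh\r\d\abs{\theta}^{q-2}\theta$, is treated with the anisotropic product estimates of Lemma~\ref{le:phi}, the horizontal diffusion of $\theta$, and the propagated $L^\infty$ bounds on $(v,\r)$ — which is precisely where the hypothesis $(v_0,\r_0)\in L^\infty(\O)$ enters. This produces $\sup_{[0,t]}\norm{\p_z v}_q$, finite because $\p_z v_0\in L^q$ with $q\in(2,\infty)$, together with the associated horizontal dissipation of $\theta$.

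With $\p_z v\in L^\infty([0,t];L^q(\O))$ for some $q>2$ in hand, the final step is to close the full $H^1$ estimate. Testing the $v$- and $\r$-equations against $-\la_h v$ and $-\la_h\r$ controls the horizontal second derivatives, while differentiating the $\r$-equation in $z$ and testing with $\p_z\r$ controls $\p_z\r$; in each case the dangerous vertical-transport contributions involving $w\p_{zz}(\d)$ are again defused through $\p_z w=-\nh\d v$ and the anisotropic inequalities, now crucially using the surplus integrability $q>2$ to avoid a borderline logarithmic loss. The bounds on $(\p_t v,\p_t\r)$ in $L^2$ then follow directly from the equations once the spatial derivatives are controlled, and a Gronwall argument yields the asserted estimate with a continuously increasing $\ma{J}_1(t)$. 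I expect the main obstacle to be the second step, the $L^q$ control of $\p_z v$: without vertical viscosity the term $w\p_{zz}v$ has no dissipative counterpart, and one must exploit simultaneously the incompressibility identity $\p_z w=-\nh\d v$, the hydrostatic reduction of the pressure to $\nh\r$, and the subcritical exponent $q>2$ to close the estimate — which is exactly the mechanism forcing the additional assumption $\p_z v_0\in L^q$.
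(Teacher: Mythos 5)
Your overall skeleton (an $L^q$ bound on $\theta=\p_z v$ obtained by differentiating the momentum equation in $z$, converting the pressure into $\nh\r$ via the hydrostatic relation, killing the vertical transport with incompressibility, and then closing the $H^1$ estimate) is essentially the strategy of Cao--Li--Titi \cite{cc2017}; note, however, that the present paper does not prove Proposition \ref{po:J1t} at all --- it quotes it from \cite{cc2017} --- so the only meaningful comparison is with that reference. Measured against it, your proposal has one genuine gap, and it sits exactly where you declare the argument routine: the propagation of the $L^\infty$ bounds on $(v,\r)$. The cancellation you invoke at the $L^2$ level, $\oo{\nh p \d v}=-\oo{\r w}$, is specific to the pairing with $v$ itself. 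For the test function $|v|^{p-2}v$ one instead gets $-\oo{p\,\nh\d\xkh{|v|^{p-2}v}}$, and writing $p=p_\nu(x,y,t)+\dk{\r(x,y,\xi,t)}$, the barotropic (surface) part $p_\nu$ does not cancel, because $\dz{|v|^{p-2}v}$ is not divergence-free even though $\dz{v}$ is. This surviving surface pressure is precisely why the proofs in \cite{cc2017} (and in \cite{es2016,jl2014}) are built on the barotropic--baroclinic decomposition $v=\bar v+\tilde v$, $\bar v=\frac{1}{2}\dz{v}$: the fluctuation $\tilde v$ satisfies an equation free of $p_\nu$, on which $L^p$- and $L^\infty$-type estimates can be run, while $\bar v$ solves a 2D Navier--Stokes-type system treated by 2D techniques. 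No trace of this decomposition appears in your proposal, so the sentence ``an analogous $L^p$ argument propagates the $L^\infty$ bounds'' has no proof behind it.

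There is a second, system-specific obstruction at the same step. In the limit system (\ref{eq:ptl}) of this paper the temperature equation carries the stratification source $+w$: testing with $|\r|^{p-2}\r$ gives $\frac{d}{dt}\norm{\r}_p\lesssim\norm{w}_p$, and since $w=-\dk{\nh\d v}$ is controlled at this stage only through $\nh v\in L^2_tL^2_x$, even the $L^\infty$ bound on $\r$ is not a maximum principle here, unlike for the standard primitive equations of \cite{cc2017}. (Tellingly, when the present authors need $L^\infty$ information in the proof of Theorem \ref{th:J2t}, they do not propagate it from $t=0$; they manufacture it at a positive time $t_0$ via smoothing and Proposition \ref{po:delta}, and only then invoke Proposition \ref{po:J1t} starting from $t_0$.) By contrast, your second and third steps are sound conditionally on the $L^\infty$ control: given $\norm{v}_\infty$ and $\norm{\r}_\infty$, integration by parts turns $\oo{(\theta\d\nh)v\d|\theta|^{q-2}\theta}$ and $\oo{\nh\r\d|\theta|^{q-2}\theta}$ into quantities absorbed by the dissipation $\oo{|\theta|^{q-2}|\nh\theta|^2}$ plus Gronwall terms, and the $H^1$ closure using $q>2$ is standard. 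So you have misplaced the difficulty: the $L^q$ estimate on $\p_z v$ is the easier half once $L^\infty$ control is available; the hard half is the $L^\infty$ control itself, which your proposal assumes rather than proves.
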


In order to prove the Theorem~\ref{th:J2t}, we need the following proposition, which is a direct consequence of Lemma 2.2 with exponent $(6,6,2)$ in \cite{yu2022}.
\begin{proposition}\label{po:delta}
Let $(v,\r)$ be the local strong solution to the system (\ref{eq:ptl}) corresponding to (1.5)-(1.7). Then the following inequalities hold
\begin{flalign*}
  &\sup_{(x,y,z) \in \overline{\O}}|v(x,y,z,t_\delta)|+\sup_{(x,y,z) \in \overline{\O}}|\r(x,y,z,t_\delta)|\\
  &\qquad\leq C\xkh{\norm{\nh v}_6+\norm{\p_z v}_2+\norm{\nh \r}_6+\norm{\p_z \r}_2}(t_\delta)
\end{flalign*}
for some fixed time $t_\delta \in [0,t^*_0)$.
\end{proposition}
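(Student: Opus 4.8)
The plan is to read this as an anisotropic Agmon-type $L^\infty$ bound on a single time slice and to reduce it to the cited embedding plus the regularity already recorded in Proposition \ref{po:lgc}. Concretely, I would invoke Lemma 2.2 of \cite{yu2022} with the exponent triple $(6,6,2)$: since $\frac16+\frac16+\frac12=\frac56<1$, a periodic function on $\O$ whose horizontal derivatives lie in $L^6$ and whose vertical derivative lies in $L^2$ is controlled in $L^\infty(\O)$ by $\norm{\nh f}_6+\norm{\p_z f}_2$. First I would apply this twice, once to $f=v(\cdot,t_\delta)$ and once to $f=\r(\cdot,t_\delta)$, and add the two resulting bounds; this reproduces exactly the claimed inequality, so once the relevant anisotropic inequality is in place the statement is immediate.

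It then remains to exhibit a time $t_\delta$ at which the right-hand side is finite. Here I would use Proposition \ref{po:lgc}, which guarantees $(v,\r)\in L^\infty([0,t^*_0);H^1(\O))$ together with $(\nh v,\nh\r)\in L^2([0,t^*_0);H^1(\O))$. In particular $\int_0^{t}\bigl(\norm{\nh v}^2_{H^1(\O)}+\norm{\nh\r}^2_{H^1(\O)}\bigr)\,ds<\infty$ for every $t<t^*_0$, so for almost every $t\in[0,t^*_0)$ one has $\nh v(t),\nh\r(t)\in H^1(\O)$; by the three-dimensional Sobolev embedding $H^1(\O)\hookrightarrow L^6(\O)$ this yields $\nh v(t),\nh\r(t)\in L^6(\O)$, while $\p_z v(t),\p_z\r(t)\in L^2(\O)$ follows directly from $(v,\r)(t)\in H^1(\O)$. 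I would simply take $t_\delta$ to be any such time of full regularity; the excluded null set is irrelevant, and this choice makes the right-hand side finite.

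The hard part will not be the embedding itself but the fact that no zeroth-order term $\norm{v}_6$ or $\norm{\r}_6$ appears on the right-hand side: the inequality as written fails for nonzero constants, so some Poincaré-type control must be built in. I would justify this through the symmetry and periodicity conditions (\ref{eq:eve})--(\ref{eq:xyz}), which are precisely the structural hypotheses under which the version of Lemma 2.2 in \cite{yu2022} drops the lower-order contribution. For the odd-in-$z$ field $\r$ this is transparent: $\r(x,y,0)=0$ gives the one-dimensional vertical control $\sup_z|\r(x,y,z)|\le\int_{-1}^{1}|\p_z\r|\,dz$, after which the horizontal $L^6$ embedding closes the estimate with no zeroth-order term. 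For the even-in-$z$ field $v$ the vanishing at $z=0$ is unavailable, so I would verify carefully that the periodicity still supplies the horizontal Poincaré inequality needed to absorb the function norm into $\norm{\nh v}_6$, checking in particular that the constant $C$ depends only on $\O$; this reconciliation of the missing lower-order term is the single point where the argument is more than a mechanical substitution into the cited lemma.
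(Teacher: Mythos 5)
Your first two paragraphs coincide with essentially the paper's entire proof: the paper offers no argument for Proposition \ref{po:delta} beyond the sentence that it is a ``direct consequence of Lemma 2.2 with exponent $(6,6,2)$ in \cite{yu2022}'', so applying that lemma to $v(\cdot,t_\delta)$ and $\r(\cdot,t_\delta)$, and selecting $t_\delta$ through Proposition \ref{po:lgc} together with $H^1(\O)\hookrightarrow L^6(\O)$ so that the right-hand side is finite, is precisely the intended route (and matches how the proposition is actually invoked in the proof of Theorem \ref{th:J2t}).

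The genuine gap is the one you isolate in your last paragraph, and your proposed resolution for $v$ cannot be made to work --- not because of a missing computation, but because the statement without a lower-order term is false for the even-in-$z$ component. Periodicity supplies no Poincar\'e inequality: nonzero constants are periodic and even in $z$. Nor does the equation hide any mean-zero structure for $v$: integrating the first equation of (\ref{eq:ptl}) over $\O$ annihilates the nonlinear, pressure and viscous terms and leaves $\frac{d}{dt}\oo{v}=-f_0\vec{k}\times\oo{v}$, so $\abs{\oo{v}}$ is conserved in time. Concretely, take $v_0\equiv c\neq 0$ constant and $\r_0\equiv 0$; these data satisfy (\ref{ga:are})--(\ref{eq:eve}) and $\dz{\nh \d v_0}=0$, and the unique local strong solution is the spatially constant vector solving $\p_t v+f_0\vec{k}\times v=0$, with $w\equiv 0$, $\r\equiv 0$, $p\equiv 0$. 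For this solution the claimed inequality reads $|c|\leq 0$ at \emph{every} $t_\delta\in[0,t^*_0)$, so no choice of time and no constant $C$ rescues it. (Your treatment of $\r$ rests on the right idea --- oddness in $z$ forces zero mean, which is the hypothesis under which the lower-order term in such anisotropic embeddings can be dropped --- but note that even there your one-dimensional sketch does not close by itself: bounding $\sup_{(x,y)\in M}\dz{|\p_z\r|}$ would require control of $\nh\p_z\r$, which is not on the right-hand side, so one genuinely needs the lemma of \cite{yu2022} rather than the vertical line integral alone.)

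The correct repair is therefore not to absorb $\norm{v}_6$ into $\norm{\nh v}_6$, but to restate the proposition with a zeroth-order term,
\begin{equation*}
  \sup_{(x,y,z) \in \overline{\O}}|v(x,y,z,t_\delta)|+\sup_{(x,y,z) \in \overline{\O}}|\r(x,y,z,t_\delta)|
  \leq C\xkh{\norm{v}_2+\norm{\nh v}_6+\norm{\p_z v}_2+\norm{\nh \r}_6+\norm{\p_z \r}_2}(t_\delta),
\end{equation*}
which is the form such anisotropic embeddings take for general periodic functions and which your argument then proves verbatim. Nothing is lost by this weakening: in the only place the proposition is used (the proof of Theorem \ref{th:J2t}), the estimate (\ref{fl:0t0}) already gives $\sup_{0 \leq s \leq t_0}\norm{(v,\r)}_{H^1(\O)}\leq C$, so the additional term $\norm{v}_2(t_0)$ is controlled by the same bound $C(1+1/t^*_0)$ and the conclusion $(v,\r)(\cdot,t_0)\in L^\infty(\O)$ is unaffected.
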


With the help of Proposition \ref{po:J1t} and \ref{po:delta}, we give the proof of Theorem~\ref{th:J2t}.~The key to the proof is to establish $L^\infty([0,t^*_0);L^4({\O}))$ estimate on the vertical derivative of horizontal velocity that does not depend on $\norm{(v_0,\r_0)}_\infty$.

\begin{proof}[Proof of Theorem~\ref{th:J2t}.]
Thanks to the energy estimate (\ref{fl:VTC}), we obtain
\begin{equation*}
  \int^{t^*_0}_{t^*_0/4}\xkh{\norm{\n\nh v}^2_2+\norm{\n\nh \r}^2_2}ds \leq C,
\end{equation*}which implies that there exists a fixed time $t_0 \in (t^*_0/4,t^*_0)$ such that
\begin{equation}\label{eq:ct0}
  \norm{\n\nh v}^2_2(t_0)+\norm{\n\nh \r}^2_2(t_0) \leq C/t^*_0.
\end{equation}Moreover, the following the energy estimate holds
\begin{flalign}
  &\sup_{0 \leq s \leq t_0}\xkh{\norm{(v,\r)}^2_{H^1(\O)}}(s)+\int^{t_0}_0{\norm{\nh v}^2_{H^1(\O)}}ds\nonumber\\
  &\qquad+\int^{t_0}_0{\xkh{{\norm{\nh \r}^2_{H^1(\O)}}+\norm{(\p_t v,\p_t \r)}^2_{L^2(\O)}}}ds \leq C.\label{fl:0t0}
\end{flalign}According to Proposition \ref{po:delta}, it deduces from (\ref{eq:ct0}), (\ref{fl:0t0}) and Sobolev imbedding theorem that
\begin{flalign*}
  &\sup_{(x,y,z) \in \overline{\O}}|v(x,y,z,t_0)|+\sup_{(x,y,z) \in \overline{\O}}|\r(x,y,z,t_0)|\\
  &\qquad\leq C\xkh{\norm{\n\nh v}_2(t_0)+\norm{\nh v}_2(t_0)+\norm{\p_z v}_2(t_0)}\\
  &\qquad\quad+C\xkh{\norm{\n\nh \r}_2(t_0)+\norm{\nh \r}_2(t_0)+\norm{\p_z \r}_2(t_0)}\\
  &\qquad\leq C\xkh{\norm{\n\nh v}^2_2(t_0)+\norm{\n v}^2_2(t_0)+\norm{\n\nh \r}^2_2(t_0)+\norm{\n \r}^2_2(t_0)+1}\\
  &\qquad\leq C(1+1/t^*_0).
\end{flalign*}The above inequality leads to $(v(x,y,z,t_0),\r(x,y,z,t_0)) \in L^\infty(\O)$.

Next, we show that $\p_z v \in L^\infty([0,t^*_0);L^4({\O}))$. Integrating the second equation of the system (\ref{eq:ptl}) with respect to $z$ gives
\begin{equation*}
  p(x,y,z,t)=p_\nu(x,y,t)+\dk{\r(x,y,\xi,t)},
\end{equation*}where $p_\nu(x,y,t)$ represents unknown surface pressure as $z=0$. Based on the above relation, we can rewrite the first equation of the system (\ref{eq:ptl}) as
\begin{flalign*}
  &\p_t v-\la_h v+(v \d \nh)v-\xkh{\dk{\nh \d v(x,y,\xi,t)}} \p_z v+\nh p_\nu(x,y,t)\\
  &+\dk{\nh \r(x,y,\xi,t)}+f_0\vec{k}\times v=0,
\end{flalign*}note that the divergence-free condition is used. Differentiating the above equation with respect to $z$, we have
\begin{flalign}
  &\p_t (\p_z v)-\la_h (\p_z v)+(\p_z v \d \nh)v-(\nh \d v)\p_z v+\nh\r+f_0\vec{k}\times(\p_z v)\nonumber\\
  &+(v \d \nh)\p_z v-\xkh{\dk{\nh \d v(x,y,\xi,t)}}\p_{zz} v=0.\label{fl:pzv}
\end{flalign}Taking the $L^2(\O)$ inner product of the equation (\ref{fl:pzv}) with $|\p_z v|^2\p_z v$, then it follows from integration by parts that
\begin{flalign}
  \frac{1}{4}\frac{d}{dt}&\norm{\p_z v}^4_4+\oo{\xkh{|\p_z v|^2|\nh \p_z v|^2+2|\p_z v|^2|\nh|\p_z v||^2}}\nonumber\\
  &=\oo{\zkh{(\nh \d v)\p_z v-(\p_z v \d \nh)v} \d |\p_z v|^2\p_z v}\nonumber\\
  &\quad+\oo{\zkh{\r |\p_z v|^2 (\nh \d \p_z v)+\r(\p_z v) \d \xkh{\nh|\p_z v|^2}}},\label{fl:v44}
\end{flalign}where we have used the following facts that
\begin{gather*}
  \oo{\zkh{\xkh{v \d \nh}\p_z v-\xkh{\dk{\nh \d v(x,y,\xi,t)}}\p_{zz} v} \d |\p_z v|^2 \p_z v}=0,\\
  \oo{\zkh{f_0\vec{k}\times(\p_z v)} \d |\p_z v|^2\p_z v}=0.
\end{gather*}For the first integral term on the right-hand side of (\ref{fl:v44}), using the Lemma \ref{le:phi} and Young inequality yields
\begin{flalign}
  &\oo{\zkh{(\nh \d v)\p_z v-(\p_z v \d \nh)v} \d |\p_z v|^2\p_z v}\nonumber\\
  &\qquad\leq C\oo{|\nh v||\p_z v|^2|\p_z v|^2}\nonumber\\
  &\qquad\leq C\mm{\xkh{\dz{(|\nh v|+|\nh \p_z v|)}}\xkh{\dz{|\p_z v|^2|\p_z v^2|}}}\nonumber\\
  &\qquad\leq C\xkh{\oo{|\p_z v|^4}}(\norm{\nh v}_2+\norm{\nh\p_z v}_2)\nonumber\\
  &\qquad\quad+C(\norm{\nh v}_2+\norm{\nh\p_z v}_2)\norm{\p_z v}^2_4\xkh{\oo{|\p_z v|^2|\nh \p_z v|^2}}^{1/2}\nonumber\\
  &\qquad\leq C\xkh{1+\norm{\nh v}^2_{H^1}}\norm{\p_z v}^4_4+\frac{3}{8}\oo{|\p_z v|^2|\nh \p_z v|^2},\label{fl:p41}
\end{flalign}note that the boundary condition (\ref{ga:are}) and symmetry condition (\ref{eq:eve}) are used. Applying the H\"{o}lder inequality, Young inequality and Sobolev imbedding theorem, the last integral term on the right-hand side of (\ref{fl:v44}) can be bounded as
\begin{flalign}
  &\oo{\zkh{\r |\p_z v|^2 (\nh \d \p_z v)+\r(\p_z v) \d \xkh{\nh|\p_z v|^2}}}\nonumber\\
  &\qquad\leq C\oo{|\r||\p_z v|^2|\nh \p_z v|}\nonumber\\
  &\qquad\leq C\oo{|\r|^2|\p_z v|^2}+\frac{3}{8}\oo{|\p_z v|^2|\nh \p_z v|^2}\nonumber\\
  &\qquad\leq C\norm{\r}^2_{H^1}\xkh{1+\norm{\p_z v}^4_4}+\frac{3}{8}\oo{|\p_z v|^2|\nh \p_z v|^2}.\label{fl:p42}
\end{flalign}Adding (\ref{fl:p41}) and (\ref{fl:p42}), then it deduces from the Gronwall inequality and Proposition \ref{po:lgc} that
\begin{flalign*}
  &\sup_{0 \leq s < t^*_0}\norm{\p_z v}_4(s)\leq\exp\dkh{C\int^{t^*_0}_0\xkh{1+\norm{\nh v}^2_{H^1}+\norm{\r}^2_{H^1}}ds}\\
  &\qquad\times\zkh{\norm{\p_z v_0}^4_4+C\int^{t^*_0}_0\norm{\r}^2_{H^1}ds}^{1/4} \leq Ce^{C(1+t^*_0)}\xkh{t^*_0+\norm{\p_z v_0}^4_4}^{1/4}.
\end{flalign*}In particular, we have $(\p_z v)(x,y,z,t_0) \in L^4(\O)$.

Recall that $(v(x,y,z,t_0),\r(x,y,z,t_0)) \in L^\infty(\O)$, it follows from Proposition \ref{po:J1t} that
\begin{flalign}
  &\sup_{t_0 \leq s \leq t}\xkh{\norm{(v,\r)}^2_{H^1(\O)}}(s)+\int^t_{t_0}{\norm{\nh v}^2_{H^1(\O)}}ds\nonumber\\
  &\qquad+\int^t_{t_0}{\xkh{{\norm{\nh \r}^2_{H^1(\O)}}+\norm{(\p_t v,\p_t \r)}^2_{L^2(\O)}}}ds \leq \ma{J}_1(t),\label{fl:t01}
\end{flalign}for any $t \in [t_0,\infty)$. Combining (\ref{fl:0t0}) and (\ref{fl:t01}) yields the energy estimate in Theorem \ref{th:J2t}. Hence the proof is completed.
\end{proof}

The proof Theorem~\ref{th:J3t} is shown below.
\begin{proof}[Proof of Theorem~\ref{th:J3t}.]
Thanks to the proof of Proposition \ref{po:eta}, substituting (\ref{fl:e22}) and the energy estimate in Theorem~\ref{th:J2t} into (\ref{fl:Ft}) gives
\begin{flalign*}
  &\sup_{0 \leq t \leq \ma{T}}\xkh{\norm{(V_\e,\e W_\e,\g_\e)}^2_2}(t)+\int^\ma{T}_0{\xkh{\norm{\nh V_\e}^2_2+\e^{\be-2}\norm{\p_z V_\e}^2_2}}dt\\
  &\qquad\quad+\int^\ma{T}_0{\xkh{\e^2\norm{\nh W_\e}^2_2+\norm{\nh \g_\e}^2_2+\e^\be\norm{\p_z W_\e}^2_2+\e^{\ga-2}\norm{\p_z \g_\e}^2_2}}dt\\
  &\qquad\leq C\exp\dkh{C(\ma{T}+1)\xkh{\ma{J}_2(\ma{T})+\ma{J}^2_2(\ma{T})}}\times\bigg\{\xkh{\e^2+\e^\be}\ma{J}_2(\ma{T})+\e^2\ma{J}^2_2(\ma{T})\\
  &\qquad\quad+\xkh{\e^{\ga-2}+\e^{\be-2}}\ma{T}\ma{J}_2(\ma{T})+\e^2(\ma{T}+1)\xkh{\norm{v_0}^2_2+\e^2\norm{w_0}^2_2+\norm{\r_0}^2_2}^2\bigg\}\\
  &\qquad\leq C\e^\eta(\ma{T}+1)\zkh{\ma{J}_2(\ma{T})+\ma{J}^2_2(\ma{T})+\xkh{\norm{v_0}^2_2+\norm{w_0}^2_2+\norm{\r_0}^2_2}^2}\\
  &\qquad\quad\times\exp\dkh{C(\ma{T}+1)\xkh{\ma{J}_2(\ma{T})+\ma{J}^2_2(\ma{T})}}=:\e^\eta\ma{J}_3(\ma{T}),
\end{flalign*}for any $\ma{T}>0$, where $\eta=\min\{2,\be-2,\ga-2\}$ with $2<\be,\ga<\infty$, and $\ma{J}_3(t)$ is a nonnegative continuously increasing function defined on $[0,\infty)$. Here $C$ is a positive constant that does not depend on $\e$.~Obviously, the local strong convergences in Theorem \ref{th:VTC} can be extended to the global strong convergences, and the rate of convergence is of the order $O(\e^{\eta/2})$.
\end{proof}

\section{Strong convergence for $H^2$ initial data}
In this section,~assume that the initial data $(v_0,\r_0) \in H^2(\O)$ with
\begin{equation*}
  \dz{\nh \d v_0(x,y,z)}=0,~\textnormal{for all}~(x,y)\in M,
\end{equation*}we prove that the scaled Boussinesq equations with rotation (\ref{eq:ptv}) strongly converge to the primitive equations with only horizontal viscosity and diffusivity (\ref{eq:ptl}) as the aspect ration parameter $\e$ goes to zero.

With this assumption of the initial data, there is a unique local strong solution $(v_\e,w_\e,\r_\e)$ to the system (\ref{eq:ptv}), subject to (\ref{ga:are})-(\ref{eq:eve}).~Denote by $\ma{T}^*_\e$ the maximal existence time of this local strong solution.~Moreover, the system (\ref{eq:ptl}) corresponding to (\ref{ga:are})-(\ref{eq:eve}) has a unique global strong solution $(v,\r)$.~The global well-posedness of strong solutions to the primitive equations with only horizontal viscosity and diffusivity (\ref{eq:ptl}) is as follows (see \cite{es2016}).
\begin{proposition}\label{po:J4t}
Let $v_0$,$T_0\in H^2(\O)$ be two periodic functions with~$\dz{\nh \d v_0(x,y,z)}=0,~\textnormal{for all}~(x,y)\in M$. Then the following assertions hold true:\\
(i) For any $\mathcal{T}>0$, there exists a unique global strong solution $(v,\r)$ to the primitive equations with only horizontal viscosity and diffusivity (\ref{eq:ptl}) corresponding to (\ref{ga:are})-(\ref{eq:eve}), such that
\begin{gather*}
  (v,\r) \in L^{\infty}\xkh{[0,\mathcal{T}];H^2(\O)} \cap C\xkh{[0,\mathcal{T}];H^1(\O)},\\
  (\nh v,\nh\r) \in L^2\xkh{[0,\mathcal{T}];H^2(\O)},(\p_t v,\p_t \r) \in L^2\xkh{[0,\mathcal{T}];H^1(\O)};
\end{gather*}
(ii) The global strong solution $(v,\r)$ satisfies the following energy estimate
\begin{flalign}
  &\sup_{0 \leq s \leq t}\xkh{\norm{(v,\r)}^2_{H^2(\O)}}(s)+\ds{\norm{\nh v}^2_{H^2(\O)}}\nonumber\\
  &\qquad+\ds{\xkh{{\norm{\nh \r}^2_{H^2(\O)}}+\norm{(\p_t v,\p_t \r)}^2_{H^1(\O)}}} \leq \ma{J}_4(t),\label{fl:J4t}
\end{flalign}for any $t \in [0,\infty)$, where $\ma{J}_4(t)$ is a nonnegative continuously increasing function defined on $[0,\infty)$.
\end{proposition}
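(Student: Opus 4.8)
The plan is to produce a local $H^2$ strong solution and then propagate the $H^2$ norm on every interval $[0,\mathcal{T}]$ through a hierarchy of a priori estimates, closing the bound on the purely vertical second derivatives last. As a preliminary reduction I would integrate the hydrostatic relation $\p_z p=\r$ in the vertical variable to write $p(x,y,z,t)=p_s(x,y,t)+\dk{\r(x,y,\xi,t)}$ for an unknown surface pressure $p_s$, and use the divergence-free constraint together with the symmetry condition (\ref{eq:eve}) to recover $w=-\dk{\nh \d v(x,y,\xi,t)}$. This turns (\ref{eq:ptl}) into a closed, horizontally dissipative system for $(v,\r)$ alone. A standard Galerkin scheme then produces a local strong solution, and uniqueness follows from an $L^2$ energy estimate on the difference of two solutions; since $v_0\in H^2(\O)\hookrightarrow L^\infty(\O)$, the $L^\infty$ bounds that complicate the $H^1$-only theory are available here from the outset.

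For the global bounds I would argue by order of regularity. The basic $L^2$ estimate comes from testing the momentum equation with $v$ and the temperature equation with $\r$: the convective terms cancel by the divergence-free condition, the Coriolis term vanishes because $\vec{k}\times v\perp v$, and, crucially, the pressure contribution $\oo{\nh p \d v}=-\oo{\r w}$ exactly cancels the stratification term $\oo{w\r}$. This yields $\frac{1}{2}\frac{d}{dt}\xkh{\norm{v}^2_2+\norm{\r}^2_2}+\norm{\nh v}^2_2+\norm{\nh \r}^2_2=0$ and hence global control of $(v,\r)$ in $L^\infty_t L^2_x\cap L^2_t H^1_x$.

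The central difficulty is the absence of vertical viscosity, which prevents any direct parabolic control of vertical derivatives. The decisive device --- exactly the one carried out in the proof of Theorem \ref{th:J2t} --- is an $L^\infty_t L^4_x$ bound on $\p_z v$, obtained by differentiating the momentum equation in $z$, testing against $|\p_z v|^2\p_z v$, and estimating the resulting nonlinear terms by Lemma \ref{le:phi} and Young's inequality, so that $\frac{d}{dt}\norm{\p_z v}^4_4$ is dominated by a factor integrable in time by virtue of the $L^2_t H^1_x$ bound on $(v,\r)$; Gr\"onwall closes the step. Equipped with $\p_z v\in L^\infty_t L^4_x$, I would then close the $H^1$ estimate for $(v,\r)$ by testing with $-\la v$ and $-\la \r$ --- which supplies the dissipation $\norm{\la_h v}^2_2+\norm{\nh \p_z v}^2_2$ --- and controlling the vertical-derivative nonlinearities through the anisotropic inequalities of Lemma \ref{le:phi} together with the $L^4$ bound just established.

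Finally, to reach $H^2$ I would differentiate the reduced system once more in each spatial direction, test against the corresponding second derivatives, and again use Lemma \ref{le:phi} to split each integral into a horizontally dissipated factor and a factor controlled by the already-established $L^\infty_t H^1_x$ and $L^\infty_t L^4_x(\p_z v)$ bounds. I expect the main obstacle to be precisely this top-order step: testing with the full Laplacian produces the dissipation $\norm{\la_h v}^2_2+\norm{\nh \p_z v}^2_2$ but never controls the purely vertical second derivatives $\p_{zz}v$ and $\p_{zz}\r$, so the estimates must be organized so that every vertical second derivative appearing in a nonlinearity is paired, via Lemma \ref{le:phi}, with a horizontally smoothed factor and a controlled $L^4$ factor, leaving no uncontrolled term. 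Once the $H^2$ norm is shown to remain finite on each $[0,\mathcal{T}]$, the local solution extends to a global one and the energy estimate (\ref{fl:J4t}) follows by collecting the bounds.
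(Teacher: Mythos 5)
The first thing to note is that the paper does not prove Proposition \ref{po:J4t} at all: it is quoted from Cao--Li--Titi \cite{es2016} (the extra stratification term $w$ in the temperature equation of (\ref{eq:ptl}) is linear, cancels against the pressure exactly as in the $L^2$ identity, and disturbs nothing in their scheme). So your sketch must be measured against the proof in \cite{es2016}, which is a long and delicate argument. Your reduction of the system, the local theory, and the basic $L^2$ identity are fine; the genuine gaps are in the global a priori estimates, and they sit exactly where the difficulty of \cite{es2016} lies.

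First, the scheme $L^2 \rightarrow L^\infty_t L^4_x(\p_z v)\rightarrow H^1 \rightarrow H^2$ is circular. You assert that the Gronwall factor in the $\norm{\p_z v}^4_4$ estimate is integrable in time ``by virtue of the $L^2_t H^1_x$ bound on $(v,\r)$'' coming from the basic energy identity; but that identity yields only $(v,\r)\in L^\infty_t L^2_x$ and $\nh v,\nh\r\in L^2_t L^2_x$. The factor this computation actually produces --- see (\ref{fl:p41})--(\ref{fl:p42}) --- is $1+\norm{\nh v}^2_{H^1}+\norm{\r}^2_{H^1}$, which contains $\norm{\n\nh v}^2_2$ and $\norm{\p_z\r}^2_2$, quantities of $H^1$ level; note in particular that $\p_z\r$ enjoys no dissipation whatsoever in (\ref{eq:ptl}). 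In Section 4 of the paper this step is legitimate only because the $L^4$ bound is needed there on the local interval $[0,t^*_0)$, where Proposition \ref{po:lgc} supplies precisely those integrated bounds; for a global result the $L^4$ and $H^1$ estimates are coupled and cannot be run in sequence. Second, even granting a global $L^\infty_t L^4_x$ bound on $\p_z v$, the $H^1$ step does not close by ``testing with $-\la v$, $-\la\r$ and using Lemma \ref{le:phi}'': you have no improved bound on $\p_z\r$, and the temperature term $\oo{w(\p_z\r)\la_h\r}$, estimated via $|w|\le\dz{|\nh v|}$ and Lemma \ref{le:phi}, produces the contribution $C\norm{\nh v}^{1/2}_2\norm{\p_z\r}^{1/2}_2\norm{\nh\nh v}^{1/2}_2\norm{\nh\p_z\r}^{1/2}_2\norm{\la_h\r}_2$, i.e.\ a coefficient of order one multiplying a term that is already quadratic in the dissipation; such a product can only be absorbed if the coefficient is small. (The paper's own Proposition \ref{po:J6t} meets exactly this structure and survives only thanks to the smallness hypothesis $\sup_s\norm{\n(V_\e,\g_\e)}^2_2\le\ell^2_0$, available for the difference system because it starts from zero data --- no such smallness is available for $(v,\r)$ itself.) This is precisely why \cite{es2016} does not run a naive energy hierarchy, but instead uses the barotropic--baroclinic decomposition $v=\bar v+\tilde v$ (also needed to control the surface pressure at the $H^1$ and $H^2$ levels, which your sketch never addresses), $L^\infty_t L^6_x$ estimates on the fluctuation, and a logarithmic Sobolev embedding combined with a logarithmic Gronwall lemma. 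Without these devices, or a genuine substitute for them, your top-order estimates do not close; in the context of this paper the correct ``proof'' of Proposition \ref{po:J4t} is the citation.
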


Let
\begin{gather*}
  (U_\e,\g_\e,P_\e)=(V_\e,W_\e,\g_\e,P_\e), \\
  (V_\e,W_\e,\g_\e,P_\e)=(v_\e-v,w_\e-w,\r_\e-\r,p_\e-p).
\end{gather*}We subtract the system (\ref{eq:ptl}) from the system (\ref{eq:ptv}) and then lead to the following system
\begin{flalign}
  &\p_t V_\e-\la_h V_\e-\e^{\be-2}\p_{zz}V_\e-\e^{\be-2}\p_{zz}v+(u \d \n)V_\e+(U_\e \d \n)v\nonumber\\
  &+(U_\e \d \n)V_\e+\nh P_\e+f_0\vec{k} \times V_\e=0, \label{fl:Ve0}\\
  &\e^2(\p_t W_\e-\la_h W_\e-\e^{\be-2}\p_{zz}W_\e+u \d \n w+u \d \n W_\e+U_\e \d \n w)\nonumber\\
  &+\e^2U_\e \d \n W_\e+\p_z P_\e-\g_\e+\e^2(\p_t w-\la_h w+\e^{\be-2}\nh \d \p_z v)=0, \label{fl:dnw}\\
  &\p_t \g_\e-\la_h \g_\e-\e^{\ga-2}\p_{zz}\g_\e-\e^{\ga-2}\p_{zz}\r+u \d \n \g_\e\nonumber\\
  &+U_\e \d \n \r+U_\e \d \n \g_\e+W_\e=0, \label{fl:nge}\\
  &\nh \d V_\e+\p_z W_\e=0, \label{fl:zWe}
\end{flalign}defined on $\O \times (0,\ma{T}^*_\e)$.

\begin{proposition}\label{po:J5t}
Suppose that $(v_0,\r_0) \in H^2(\O)$, with $\dz{\nh \d v_0}=0$. Then the system (\ref{fl:Ve0})-(\ref{fl:zWe}) has the following basic energy estimate
\begin{flalign*}
  &\sup_{0 \leq s \leq t}\xkh{\norm{(V_\e,\e W_\e,\g_\e)}^2_2}(s)+\ds{\xkh{\norm{\nh V_\e}^2_2+\e^{\be-2}\norm{\p_z V_\e}^2_2}}\\
  &\qquad\quad+\ds{\xkh{\e^2\norm{\nh W_\e}^2_2+\norm{\nh \g_\e}^2_2+\e^\be\norm{\p_z W_\e}^2_2+\e^{\ga-2}\norm{\p_z \g_\e}^2_2}}\leq \e^\eta\ma{J}_5(t),
\end{flalign*}for any $t \in [0,\ma{T}^*_\e)$, where
\begin{equation*}
  \ma{J}_5(t)=C(t+1)e^{C(t+1)\xkh{\ma{J}_4(t)+\ma{J}^2_4(t)}}\zkh{\ma{J}_4(t)+\ma{J}^2_4(t)+\xkh{\norm{v_0}^2_2+\norm{w_0}^2_2+\norm{\r_0}^2_2}^2}.
\end{equation*}Here $\eta=\min\{2,\be-2,\ga-2\}$ with $2<\be,\ga<\infty$ and $C$ is a positive constant that does not depend on $\e$.
\end{proposition}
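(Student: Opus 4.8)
The plan is to obtain the estimate by the relative-energy method already used for Proposition~\ref{po:eta}, the essential new input being that the limiting solution now enjoys the global $H^2$ bound $\ma{J}_4$ of Proposition~\ref{po:J4t} rather than the $H^1$ bound of Proposition~\ref{po:lgc}. Since $(v_0,\r_0)\in H^2(\O)$, the triple $(v_\e,w_\e,\r_\e)$ is a local strong solution on $[0,\ma{T}^*_\e)$, hence in particular a weak solution there, and $(v,\r)$ is a global strong solution; consequently the integral identity (\ref{fl:rew}) and its Gronwall consequence (\ref{fl:Ft}) remain valid on $[0,\ma{T}^*_\e)$, now with $t^*_0=\infty$. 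I would therefore reuse (\ref{fl:Ft}) verbatim and simply feed in the new a priori information.

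Equivalently, and more transparently given that Section~5 records the difference system, I would test (\ref{fl:Ve0}) with $V_\e$, (\ref{fl:dnw}) with $W_\e$ and (\ref{fl:nge}) with $\g_\e$ and add the three identities. The time derivatives combine into $\frac{1}{2}\frac{d}{dt}\xkh{\norm{V_\e}^2_2+\e^2\norm{W_\e}^2_2+\norm{\g_\e}^2_2}$ and the viscous and diffusive terms, after integrating by parts, into precisely the dissipation on the left of the statement. Four families of terms then cancel: the pressure terms collapse to $-\oo{P_\e\xkh{\nh\d V_\e+\p_z W_\e}}=0$ by (\ref{fl:zWe}); the Coriolis term vanishes since $\xkh{\vec{k}\times V_\e}\d V_\e=0$; the buoyancy coupling $-\oo{\g_\e W_\e}$ in (\ref{fl:dnw}) cancels $\oo{W_\e\g_\e}$ in (\ref{fl:nge}); and the self-transport terms $\oo{(u\d\n)V_\e\d V_\e}$, $\oo{(U_\e\d\n)V_\e\d V_\e}$ together with their $W_\e,\g_\e$ analogues vanish because both $u=(v,w)$ and $U_\e=(V_\e,W_\e)$ are divergence free.

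What remains are exactly the terms treated in Proposition~\ref{po:eta}. The explicit $\e$-power forcing terms, which after an integration by parts in $z$ leave quantities such as $\e^{\be-2}\ds{\norm{\p_z v}^2_2}$, $\e^{\ga-2}\ds{\norm{\p_z\r}^2_2}$, $\e^2\ds{\xkh{\norm{\p_t v}^2_2+\norm{\nh^2 v}^2_2}}$ and $\e^\be\ds{\norm{\nh v}^2_2}$, are all dominated by $\e^\eta\ma{J}_4$ through Proposition~\ref{po:J4t} and $\eta=\min\{2,\be-2,\ga-2\}$. The genuinely nonlinear couplings $\oo{(U_\e\d\n)v\d V_\e}$, $\e^2\oo{(U_\e\d\n w)W_\e}$ and $\oo{(U_\e\d\n\r)\g_\e}$ are split via $U_\e\d\n=V_\e\d\nh+W_\e\p_z$ and the representation $W_\e=-\dk{\nh\d V_\e}$, then bounded through the anisotropic Lemma~\ref{le:phi} and Young's inequality; this yields $A(s)\norm{(V_\e,\g_\e)}^2_2$ with $A(s)$ controlled by $\ma{J}_4+\ma{J}^2_4$, plus an absorbable fraction of the dissipation. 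Finally, the quartic terms $\e^6\ds{\xkh{\norm{w_\e}^4_2+\norm{w_\e}^2_2\norm{\nh w_\e}^2_2}}$ and $\e^2\ds{\xkh{\norm{v_\e}^4_2+\norm{v_\e}^2_2\norm{\nh v_\e}^2_2}}$ — equivalently the term $\e^2\oo{(u_\e\d\n w_\e)w}$ of the relative-energy identity — are controlled by the energy inequality (\ref{fl:e22}), which bounds $\norm{v_\e}_2,\e\norm{w_\e}_2$ by the initial data and so produces the $\e^2\xkh{\norm{v_0}^2_2+\norm{w_0}^2_2+\norm{\r_0}^2_2}^2$ summand of $\ma{J}_5$. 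Collecting everything gives $\ma{F}'(s)\le A(s)\ma{F}(s)+\e^\eta B(s)$ with $\int^t_0 A\,ds\le C(t+1)\xkh{\ma{J}_4+\ma{J}^2_4}$, and Gronwall's inequality with $\ma{F}(0)=0$ produces the bound $\e^\eta\ma{J}_5(t)$.

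I expect the main difficulty to lie in the nonlinear couplings carrying a factor $W_\e\p_z$: one has to trade $W_\e$ for its vertical primitive $-\dk{\nh\d V_\e}$ and then balance the resulting anisotropic triple products through Lemma~\ref{le:phi} so that each dangerous derivative is absorbed into the dissipation, while every surviving coefficient is kept under control by the $H^2$ norm $\ma{J}_4$ of the limiting solution alone and never by an $L^\infty$ norm — this is exactly the point where the $H^2$ regularity furnished by Proposition~\ref{po:J4t} becomes indispensable.
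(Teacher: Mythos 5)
Your proposal is correct and follows essentially the same route as the paper: the paper omits the proof of Proposition \ref{po:J5t}, remarking only that it is obtained exactly as Theorem \ref{th:J3t}, i.e., by substituting the energy inequality (\ref{fl:e22}) and the global $H^2$-based estimate (\ref{fl:J4t}) into the Gronwall consequence (\ref{fl:Ft}) from the proof of Proposition \ref{po:eta}, which is precisely your primary plan. Your additional observation that the strong solution $(v_\e,w_\e,\r_\e)$ is in particular a weak solution on $[0,\ma{T}^*_\e)$, so that (\ref{fl:rew}) and (\ref{fl:Ft}) remain valid on that interval with the now-global limit solution $(v,\r)$, supplies the justification the paper leaves implicit.
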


The proof of Proposition \ref{po:J5t} is similar to that of Theorem~\ref{th:J3t} and so is omitted. Note that the energy estimate (\ref{fl:J4t}) is used in this case.~With the aid of Proposition \ref{po:J4t} and \ref{po:J5t}, we can perform the first order energy estimate on the system (\ref{fl:Ve0})-(\ref{fl:zWe}) under some smallness condition.

\begin{proposition}\label{po:J6t}
Suppose that $(v_0,\r_0) \in H^2(\O)$, with $\dz{\nh \d v_0}=0$. Then there exists a small positive constant $\ell_0$ such that the system (\ref{fl:Ve0})-(\ref{fl:zWe}) has the following first order energy estimate
\begin{flalign*}
  &\sup_{0 \leq s \leq t}\xkh{\norm{\n(V_\e,\e W_\e,\g_\e)}^2_2}(s)+\ds{\xkh{\norm{\n\nh V_\e}^2_2+\e^{\be-2}\norm{\n\p_z V_\e}^2_2+\e^2\norm{\n\nh W_\e}^2_2}}\\
  &\qquad\quad+\ds{\xkh{\norm{\n\nh \g_\e}^2_2+\e^\be\norm{\n\p_z W_\e}^2_2+\e^{\ga-2}\norm{\n\p_z \g_\e}^2_2}}\leq \e^\eta\ma{J}_6(t),
\end{flalign*}for any $t \in [0,\ma{T}^*_\e)$, provided that
\begin{equation*}
  \sup_{0 \leq s \leq t}\xkh{\norm{\n (V_\e,\g_\e)}^2_2+\e^2\norm{\n W_\e}^2_2}(s) \leq \ell^2_0,
\end{equation*}where
\begin{flalign*}
  \ma{J}_6(t)&\leq C(t+1)\zkh{\ma{J}_4(t)+\ma{J}^2_4(t)+\ma{J}_5(t)+\ma{J}^2_5(t)+\ma{J}_4(t)\ma{J}_5(t)}\\
  &\quad\times\exp\dkh{C(t+1)\zkh{\ma{J}_4(t)+\ma{J}^2_4(t)+\ma{J}_5(t)+\ma{J}^2_5(t)+1}}.
\end{flalign*}Here $\eta=\min\{2,\be-2,\ga-2\}$ with $2<\be,\ga<\infty$ and $C$ is a positive constant that does not depend on $\e$.
\end{proposition}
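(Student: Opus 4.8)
The plan is to differentiate the difference system (\ref{fl:Ve0})--(\ref{fl:zWe}) once in each spatial direction and run a weighted energy estimate parallel to the $L^2$ argument behind Proposition \ref{po:J5t}, but now at the level of first derivatives. Concretely, for each $k \in \dkh{x,y,z}$ I would apply $\p_k$ to the three evolution equations (\ref{fl:Ve0}), (\ref{fl:dnw}) and (\ref{fl:nge}), take the $L^2(\O)$ inner products of the resulting identities with $\p_k V_\e$, $\p_k W_\e$ and $\p_k \g_\e$ respectively, and sum over $k$. Since the factor $\e^2$ already multiplies the principal part of (\ref{fl:dnw}), testing with $\p_k W_\e$ automatically produces the weighted quantity $\e^2\norm{\n W_\e}^2_2$. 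The divergence-free relation (\ref{fl:zWe}) is preserved under differentiation, giving $\nh \d \p_k V_\e + \p_z \p_k W_\e = 0$, which I will use repeatedly to integrate by parts in the pressure and transport terms.

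After summation the time-derivative contributions assemble into $\frac12 \frac{d}{dt}\norm{\n(V_\e,\e W_\e,\g_\e)}^2_2$, while the horizontal and vertical diffusion terms reproduce exactly the dissipation norms $\norm{\n\nh V_\e}^2_2$, $\e^{\be-2}\norm{\n\p_z V_\e}^2_2$, $\e^2\norm{\n\nh W_\e}^2_2$, $\norm{\n\nh \g_\e}^2_2$, $\e^\be\norm{\n\p_z W_\e}^2_2$ and $\e^{\ga-2}\norm{\n\p_z \g_\e}^2_2$ on the left-hand side. The pressure contributions $\nh P_\e$ and $\p_z P_\e$ cancel after integration by parts against the differentiated incompressibility constraint; the Coriolis term $f_0\vec{k}\times V_\e$ vanishes by antisymmetry, while the buoyancy couplings $-\g_\e$ in (\ref{fl:dnw}) and $+W_\e$ in (\ref{fl:nge}) cancel against one another. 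The terms carrying explicit powers of $\e$ together with derivatives of the limiting solution — namely $\e^{\be-2}\p_{zz}v$ in (\ref{fl:Ve0}), $\e^{\ga-2}\p_{zz}\r$ in (\ref{fl:nge}), and the block $\e^2\xkh{\p_t w - \la_h w + \e^{\be-2}\nh \d \p_z v}$ in (\ref{fl:dnw}) — are the source of the gain $\e^\eta$: after differentiating and applying Young's inequality, each is bounded by $C\e^\eta$ times norms of $(v,\r)$ controlled through $\ma{J}_4(t)$ of Proposition \ref{po:J4t}, with $\eta=\min\dkh{2,\be-2,\ga-2}$ coming from the least favourable power.

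The nonlinear contributions are the crux of the argument. Differentiating $(u \d \n)V_\e$, $(U_\e \d \n)v$ and $(U_\e \d \n)V_\e$, together with their counterparts in (\ref{fl:dnw}) and (\ref{fl:nge}), produces three types of trilinear terms: those in which a divergence-free field still transports $\p_k V_\e$ or $\p_k \g_\e$, which vanish after integration by parts; mixed terms carrying exactly one factor of the smooth solution $(v,\r)$, which I would bound with the anisotropic estimates of Lemma \ref{le:phi} and Young's inequality, placing the top-order factor into the dissipation and leaving a prefactor controlled by $\ma{J}_4$ and $\ma{J}_5$; and the genuinely difference-quadratic terms such as $(\p_k U_\e \d \n)V_\e$ tested against $\p_k V_\e$, which are cubic in the first-order difference quantities. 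These last terms are not unconditionally absorbable by the dissipation, and this is exactly where the hypothesis $\sup_{0 \leq s \leq t}\xkh{\norm{\n(V_\e,\g_\e)}^2_2 + \e^2\norm{\n W_\e}^2_2}(s) \leq \ell^2_0$ is used: taking $\ell_0$ small renders the coefficient in front of $\norm{\n\nh V_\e}^2_2$ and the companion dissipation norms small enough to move to the left-hand side.

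Collecting everything yields a differential inequality of the form $\frac{d}{dt}\norm{\n(V_\e,\e W_\e,\g_\e)}^2_2 + (\text{dissipation}) \leq A(t)\norm{\n(V_\e,\e W_\e,\g_\e)}^2_2 + \e^\eta B(t)$, where $A(t)$ aggregates the $H^2$ norms of $(v,\r)$ through $\ma{J}_4$ and the basic-energy quantities through $\ma{J}_5$, and $B(t)$ collects $\ma{J}_4$, $\ma{J}^2_4$, $\ma{J}_5$, $\ma{J}^2_5$ and $\ma{J}_4\ma{J}_5$. Because $\n(V_\e,\e W_\e,\g_\e)|_{t=0}=0$ by the shared initial data, Gr\"onwall's inequality delivers the stated bound with $\ma{J}_6(t)$ of the prescribed exponential shape in $\int^t_0 A$. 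I expect the principal obstacle to be the careful bookkeeping of the $\e$-weights attached to the vertical velocity: since $W_\e$ is only recovered from $V_\e$ through (\ref{fl:zWe}) and enters with several distinct powers of $\e$, one must verify that every trilinear term involving $\p_k W_\e$ or $\p_z W_\e$ matches the correct weighted dissipation norm $\e^2\norm{\n\nh W_\e}^2_2$ or $\e^\be\norm{\n\p_z W_\e}^2_2$ without eroding the overall $\e^\eta$ factor, and that the smallness constant $\ell_0$ can be chosen independently of $\e$.
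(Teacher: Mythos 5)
Your proposal is correct and follows essentially the same route as the paper: applying $\p_k$ to the difference system and testing with $\p_k V_\e$, $\p_k W_\e$, $\p_k\g_\e$ summed over $k$ is, after integration by parts under the periodic boundary conditions, identical to the paper's device of testing (\ref{fl:Ve0})--(\ref{fl:nge}) directly with $-\la V_\e$, $-\la W_\e$, $-\la\g_\e$. The rest of your plan --- cancellation of the pressure, Coriolis and buoyancy couplings, Lemma \ref{le:phi} plus Young's inequality for the trilinear terms with one smooth factor, absorption of the difference-cubic terms into the dissipation via the smallness hypothesis with $\ell_0$ chosen independently of $\e$ (the paper takes $\ell_0=\sqrt{3/(14C_0)}$), and Gr\"onwall with zero initial data, with the coefficients bounded through $\ma{J}_4$ and $\ma{J}_5$ --- coincides with the paper's proof.
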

\begin{proof}[Proof.]
Taking the $L^2(\O)$ inner product of the first three equation in system~(\ref{fl:Ve0})-(\ref{fl:zWe}) with $-\la V_\e$, $-\la W_\e$ and $-\la \g_\e$, respectively, then it follows from integration by parts that
\begin{flalign}
  \frac{1}{2}\frac{d}{dt}&\xkh{\norm{\n (V_\e,\e W_\e,\g_\e)}^2_2}
  +\xkh{\norm{\n\nh V_\e}^2_2+\e^{\be-2}\norm{\n\p_z V_\e}^2_2+\e^2\norm{\n\nh W_\e}^2_2}\nonumber\\
  &\quad+\xkh{\norm{\n\nh \g_\e}^2_2+\e^\be\norm{\n\p_z W_\e}^2_2+\e^{\ga-2}\norm{\n\p_z \g_\e}^2_2}\nonumber\\
  &=\oo{(u \d \n \g_\e+U_\e \d \n \r+U_\e \d \n \g_\e)\la \g_\e}\nonumber\\
  &\quad+\e^2\oo{\xkh{u \d \n w+u \d \n W_\e+U_\e \d \n w+U_\e \d \n W_\e}\la W_\e}\nonumber\\
  &\quad+\oo{\zkh{(u \d \n)V_\e+(U_\e \d \n)v+(U_\e \d \n)V_\e} \d \la V_\e}\nonumber\\
  &\quad+\oo{\zkh{\e^2\xkh{\p_t w-\la_h w}\la W_\e-\e^{\ga-2}(\p_{zz}\r)\la\g_\e}}\nonumber\\
  &\quad+\oo{\zkh{\e^\be(\nh \d \p_z v)\la W_\e-\e^{\be-2}(\p_{zz}v)\d\la V_\e}}\nonumber\\
  &=:\ma{I}_1+\ma{I}_2+\ma{I}_3+\ma{I}_4+\ma{I}_5,\label{fl:Veg}
\end{flalign}note that the resultants have been added up.

In order to estimate the first integral term $\ma{I}_1$ on the right-hand side of (\ref{fl:Veg}), we split it into three parts, $\ma{I}_{11}$, $\ma{I}_{12}$ and $\ma{I}_{13}$. By virtue of the divergence-free condition, Lemma \ref{le:phi} and Young inequality, these integral terms can be bounded as
\begin{flalign*}
  \ma{I}_{11}:&=\oo{(u \d \n \g_\e)\la \g_\e}\\
  &=\oo{[(v \d \nh \g_\e)\la_h \g_\e+w(\p_z \g_\e)\la_h \g_\e]}\\
  &\quad+\oo{[(\nh \d v)(\p_z \g_\e)\p_z \g_\e-(\p_z v \d \nh \g_\e)\p_z \g_\e]}\\
  &\leq\mm{\xkh{\dz{(|v|+|\p_z v|)}}\xkh{\dz{|\nh \g_\e||\la_h \g_\e|}}}\\
  &\quad+\mm{\xkh{\dz{|\nh v|}}\xkh{\dz{|\p_z \g_\e||\la_h \g_\e|}}}\\
  &\quad+\mm{\xkh{\dz{(|\nh v|+|\nh \p_z v|)}}\xkh{\dz{|\p_z \g_\e|^2}}}\\
  &\quad+\mm{\xkh{\dz{(|\nh \g_\e|+|\nh \p_z \g_\e|)}}\xkh{\dz{|\p_z v||\p_z \g_\e|}}}\\
  &\leq C\zkh{\norm{v}^{1/2}_2\xkh{\norm{v}^{1/2}_2+\norm{\nh v}^{1/2}_2}
  +\norm{\p_z v}^{1/2}_2\xkh{\norm{\p_z v}^{1/2}_2+\norm{\nh \p_z v}^{1/2}_2}}\\
  &\quad\times\norm{\nh \g_\e}^{1/2}_2\xkh{\norm{\nh \g_\e}^{1/2}_2+\norm{\nh^2 \g_\e}^{1/2}_2}\norm{\la_h \g_\e}_2\\
  &\quad+C\norm{\p_z \g_\e}^{1/2}_2\norm{\nh \p_z \g_\e}^{1/2}_2\norm{\la_h \g_\e}_2
  \norm{\nh v}^{1/2}_2\xkh{\norm{\nh v}^{1/2}_2+\norm{\nh^2 v}^{1/2}_2}\\
  &\quad+C\norm{\nh v}^{1/2}_2\xkh{\norm{\nh v}^{1/2}_2+\norm{\nh^2 v}^{1/2}_2}\norm{\p_z \g_\e}_2\norm{\la_h \g_\e}_2\\
  &\quad+C(\norm{\nh v}_2+\norm{\nh \p_z v}_2)\norm{\p_z \g_\e}_2(\norm{\p_z \g_\e}_2+\norm{\nh \p_z \g_\e})\\
  &\quad+C(\norm{\nh \g_\e}_2+\norm{\nh \p_z \g_\e}_2)\norm{\p_z v}^{1/2}_2\xkh{\norm{\p_z v}^{1/2}_2+\norm{\nh \p_z v}^{1/2}_2}\\
  &\quad\times\norm{\p_z \g_\e}^{1/2}_2\xkh{\norm{\p_z \g_\e}^{1/2}_2+\norm{\nh \p_z \g_\e}^{1/2}_2}\\
  &\leq C\xkh{\norm{v}^2_2+\norm{\nh v}^2_2+\norm{\n v}^2_2+\norm{\n\nh v}^2_2+\norm{v}^4_2}\norm{\n \g_\e}^2_2\\
  &\quad+C\xkh{\norm{v}^2_2\norm{\nh v}^2_2+\norm{\n v}^4_2+\norm{\n v}^2_2\norm{\nh v}^2_2}\norm{\n \g_\e}^2_2\\
  &\quad+C\xkh{1+\norm{\n v}^2_2\norm{\n\nh v}^2_2}\norm{\n \g_\e}^2_2+\frac{2}{105}\norm{\n\nh \g_\e}^2_2,
\end{flalign*}
\begin{flalign*}
  \ma{I}_{12}:&=\oo{(U_\e \d \n \r)\la \g_\e}\\
  &=\oo{[(V_\e \d \nh \r)\la_h \g_\e-(\p_z V_\e \d \nh \r)\p_z \g_\e-(V_\e \d \nh \p_z \r)\p_z \g_\e]}\\
  &\quad+\oo{[W_\e(\p_z \r)\la_h \g_\e-(\p_z W_\e)(\p_z \r)\p_z \g_\e-W_\e(\p_{zz} \r)\p_z \g_\e]}\\
  &\leq\mm{\xkh{\dz{(|V_\e|+|\p_z V_\e|)}}\xkh{\dz{|\nh \r||\la_h \g_\e|}}}\\
  &\quad+\mm{\xkh{\dz{(|\nh \r|+|\nh \p_z \r|)}}\xkh{\dz{|\p_z V_\e||\p_z \g_\e|}}}\\
  &\quad+\mm{\xkh{\dz{(|V_\e|+|\p_z V_\e|)}}\xkh{\dz{|\p_z \g_\e||\nh \p_z \r|}}}\\
  &\quad+\mm{\xkh{\dz{|\nh V_\e|}}\xkh{\dz{|\p_z \r||\la_h \g_\e|}}}\\
  &\quad+\mm{\xkh{\dz{(|\nh V_\e|+|\nh \p_z V_\e|)}}\xkh{\dz{|\p_z \r||\p_z \g_\e|}}}\\
  &\quad+\mm{\xkh{\dz{|\nh V_\e|}}\xkh{\dz{|\p_{zz} \r||\p_z \g_\e|}}}\\
  &\leq C\zkh{\norm{V_\e}^{1/2}_2\xkh{\norm{V_\e}^{1/2}_2+\norm{\nh V_\e}^{1/2}_2}
  +\norm{\p_z V_\e}^{1/2}_2\xkh{\norm{\p_z V_\e}^{1/2}_2+\norm{\nh \p_z V_\e}^{1/2}_2}}\\
  &\quad\times\norm{\nh \r}^{1/2}_2\xkh{\norm{\nh \r}^{1/2}_2+\norm{\nh^2 \r}^{1/2}_2}\norm{\la_h \g_\e}_2\\
  &\quad+C(\norm{\nh \r}_2+\norm{\nh \p_z \r}_2)\norm{\p_z V_\e}^{1/2}_2\xkh{\norm{\p_z V_\e}^{1/2}_2+\norm{\nh \p_z V_\e}^{1/2}_2}\\
  &\quad\times\norm{\p_z \g_\e}^{1/2}_2\xkh{\norm{\p_z \g_\e}^{1/2}_2+\norm{\nh \p_z \g_\e}^{1/2}_2}\\
  &\quad+C\zkh{\norm{V_\e}^{1/2}_2\xkh{\norm{V_\e}^{1/2}_2+\norm{\nh V_\e}^{1/2}_2}
  +\norm{\p_z V_\e}^{1/2}_2\xkh{\norm{\p_z V_\e}^{1/2}_2+\norm{\nh \p_z V_\e}^{1/2}_2}}\\
  &\quad\times\norm{\p_z \g_\e}^{1/2}_2\xkh{\norm{\p_z \g_\e}^{1/2}_2+\norm{\nh \p_z \g_\e}^{1/2}_2}\norm{\nh \p_z \r}_2\\
  &\quad+C\norm{\nh V_\e}^{1/2}_2\xkh{\norm{\nh V_\e}^{1/2}_2+\norm{\nh^2 V_\e}^{1/2}_2}
  \norm{\p_z \r}^{1/2}_2\norm{\nh \p_z \r}^{1/2}_2\norm{\la_h \g_\e}_2\\
  &\quad+C\norm{\p_z \r}_2\norm{\la_h \g_\e}_2\norm{\nh V_\e}^{1/2}_2\xkh{\norm{\nh V_\e}^{1/2}_2+\norm{\nh^2 V_\e}^{1/2}_2}\\
  &\quad+C(\norm{\nh V_\e}_2+\norm{\nh \p_z V_\e}_2)\norm{\p_z \r}^{1/2}_2\xkh{\norm{\p_z \r}^{1/2}_2+\norm{\nh \p_z \r}^{1/2}_2}\\
  &\quad\times\norm{\p_z \g_\e}^{1/2}_2\xkh{\norm{\p_z \g_\e}^{1/2}_2+\norm{\nh \p_z \g_\e}^{1/2}_2}\\
  &\quad+C\norm{\nh V_\e}^{1/2}_2\xkh{\norm{\nh V_\e}^{1/2}_2+\norm{\nh^2 V_\e}^{1/2}_2}
  \norm{\p_z \g_\e}^{1/2}_2\norm{\nh \p_z \g_\e}^{1/2}_2\norm{\p_{zz} \r}_2\\
  &\quad+C\norm{\p_z \g_\e}_2\norm{\p_{zz} \r}_2\norm{\nh V_\e}^{1/2}_2\xkh{\norm{\nh V_\e}^{1/2}_2+\norm{\nh^2 V_\e}^{1/2}_2}\\
  &\leq C\xkh{\norm{\nh \r}^2_2+\norm{\n \r}^2_2+\norm{\n \r}^4_2+\norm{\n^2 \r}^2_2}\xkh{\norm{\n V_\e}^2_2+\norm{\n \g_\e}^2_2}\\
  &\quad+C\xkh{1+\norm{\n\nh \r}^2_2+\norm{\n \r}^2_2\norm{\n \nh \r}^2_2}\xkh{\norm{\n V_\e}^2_2+\norm{\n \g_\e}^2_2}\\
  &\quad+C\norm{V_\e}^2_2\xkh{1+\norm{\n \nh \r}^2_2}+\frac{2}{105}\xkh{\norm{\n\nh V_\e}^2_2+\norm{\n\nh \g_\e}^2_2},
\end{flalign*}and
\begin{flalign*}
  \ma{I}_{13}:&=\oo{(U_\e \d \n \g_\e)\la \g_\e}\\
  &=\oo{[(V_\e \d \nh \g_\e)\la_h \g_\e+W_\e(\p_z \g_\e)\la_h \g_\e]}\\
  &\quad+\oo{[(V_\e \d \nh \g_\e)\p_{zz} \g_\e+W_\e(\p_z \g_\e)\p_{zz} \g_\e]}\\
  &=\oo{[(V_\e \d \nh \g_\e)\la_h \g_\e+W_\e(\p_z \g_\e)\la_h \g_\e]}\\
  &\quad+\oo{[(\nh \d V_\e)(\p_z \g_\e)\p_z \g_\e-(\p_z V_\e \d \nh \g_\e)\p_z \g_\e]}\\
  &\leq\mm{\xkh{\dz{(|V_\e|+|\p_z V_\e|)}}\xkh{\dz{|\nh \g_\e||\la_h \g_\e|}}}\\
  &\quad+\mm{\xkh{\dz{|\nh V_\e|}}\xkh{\dz{|\p_z \g_\e||\la_h \g_\e|}}}\\
  &\quad+\mm{\xkh{\dz{(|\nh V_\e|+|\nh \p_z V_\e|)}}\xkh{\dz{|\p_z \g_\e|^2}}}\\
  &\quad+\mm{\xkh{\dz{(|\nh \g_\e|+|\nh \p_z \g_\e|)}}\xkh{\dz{|\p_z V_\e||\p_z \g_\e|}}}\\
  &\leq C\zkh{\norm{V_\e}^{1/2}_2\xkh{\norm{V_\e}^{1/2}_2+\norm{\nh V_\e}^{1/2}_2}
  +\norm{\p_z V_\e}^{1/2}_2\xkh{\norm{\p_z V_\e}^{1/2}_2+\norm{\nh \p_z V_\e}^{1/2}_2}}\\
  &\quad\times\norm{\nh \g_\e}^{1/2}_2\xkh{\norm{\nh \g_\e}^{1/2}_2+\norm{\nh^2 \g_\e}^{1/2}_2}\norm{\la_h \g_\e}_2\\
  &\quad+C\norm{\nh V_\e}^{1/2}_2\xkh{\norm{\nh V_\e}^{1/2}_2+\norm{\nh^2 V_\e}^{1/2}_2}
  \norm{\p_z \g_\e}^{1/2}_2\norm{\nh \p_z \g_\e}^{1/2}_2\norm{\la_h \g_\e}_2\\
  &\quad+C\norm{\p_z \g_\e}_2\norm{\la_h \g_\e}_2\norm{\nh V_\e}^{1/2}_2\xkh{\norm{\nh V_\e}^{1/2}_2+\norm{\nh^2 V_\e}^{1/2}_2}\\
  &\quad+C(\norm{\nh V_\e}_2+\norm{\nh \p_z V_\e}_2)\norm{\p_z \g_\e}_2(\norm{\p_z \g_\e}_2+\norm{\nh \p_z \g_\e})\\
  &\quad+C(\norm{\nh \g_\e}_2+\norm{\nh \p_z \g_\e}_2)\norm{\p_z \g_\e}^{1/2}_2\xkh{\norm{\p_z \g_\e}^{1/2}_2+\norm{\nh \p_z \g_\e}^{1/2}_2}\\
  &\quad\times\norm{\p_z V_\e}^{1/2}_2\xkh{\norm{\p_z V_\e}^{1/2}_2+\norm{\nh \p_z V_\e}^{1/2}_2}\\
  &\leq C\zkh{1+\norm{V_\e}^4_2+\norm{V_\e}^2_2\norm{\nh V_\e}^2_2+\xkh{\norm{\n\nh V_\e}^2_2+\norm{\n\nh \g_\e}^2_2}}\\
  &\quad\times\xkh{\norm{\n V_\e}^2_2+\norm{\n \g_\e}^2_2}+\frac{2}{105}\xkh{\norm{\n\nh V_\e}^2_2+\norm{\n\nh \g_\e}^2_2},
\end{flalign*}respectively, where we have used the boundary condition (\ref{ga:are}) and symmetry condition (\ref{eq:eve}). Combining the estimates for $\ma{I}_{11}$, $\ma{I}_{12}$ and $\ma{I}_{13}$ gives
\begin{flalign}
  \ma{I}_1:&=\oo{(u \d \n \g_\e+U_\e \d \n \r+U_\e \d \n\g_\e)\la \g_\e}\nonumber\\
  &\leq C\bigg\{\xkh{\norm{v}^2_2+\norm{\nh v}^2_2+\norm{\n v}^2_2+\norm{\n\nh v}^2_2+\norm{v}^4_2+\norm{v}^2_2\norm{\nh v}^2_2}\nonumber\\
  &\quad+\xkh{\norm{\n v}^4_2+\norm{\n v}^2_2\norm{\n\nh v}^2_2+\norm{\n v}^2_2\norm{\nh v}^2_2}\nonumber\\
  &\quad+\xkh{\norm{\nh \r}^2_2+\norm{\n \r}^2_2+\norm{\n \r}^4_2+\norm{\n^2 \r}^2_2+\norm{\n\nh \r}^2_2}\nonumber\\
  &\quad+\xkh{\norm{\n \r}^2_2\norm{\n \nh \r}^2_2+1+\norm{V_\e}^4_2+\norm{V_\e}^2_2\norm{\nh V_\e}^2_2}\nonumber\\
  &\quad+\xkh{\norm{\n\nh V_\e}^2_2+\norm{\n\nh \g_\e}^2_2}\bigg\}\times\xkh{\norm{\n V_\e}^2_2+\norm{\n \g_\e}^2_2}\nonumber\\
  &\quad+C\norm{V_\e}^2_2\xkh{1+\norm{\n \nh \r}^2_2}+\frac{2}{35}\xkh{\norm{\n\nh V_\e}^2_2+\norm{\n\nh \g_\e}^2_2}.\label{fl:I1}
\end{flalign}Using the similar method as the first integral term $\ma{I}_1$ on the right-hand side of (\ref{fl:Veg}), the integral terms $\ma{I}_2$ and $\ma{I}_3$ on the right-hand side of (\ref{fl:Veg}) can be estimated as
\begin{flalign}
  \ma{I}_2:&=\e^2\oo{\xkh{u\d\n w+u \d \n W_\e+U_\e \d \n w+U_\e \d \n W_\e}\la W_\e}\nonumber\\
  &=\e^2\oo{\zkh{(\nh \d v)\dk{(\nh \d v)}-v \d \dk{\nh(\nh \d v)}}\la_h W_\e}\nonumber\\
  &\quad+\e^2\oo{\zkh{(\nh \d v)(\nh \d v)-\p_z v \d \dk{\nh(\nh \d v)}}(\nh \d V_\e)}\nonumber\\
  &\quad+\e^2\oo{\zkh{(\nh \d \p_z v)\dk{(\nh \d v)}-v \d \nh(\nh \d v)}(\nh \d V_\e)}\nonumber\\
  &\quad+\e^2\oo{\zkh{v \d \nh W_\e+(\nh \d V_\e)\dk{(\nh \d v)}}\la_h W_\e}\nonumber\\
  &\quad+\e^2\oo{\zkh{\p_z v \d \dk{\nh(\nh \d V_\e)}-2v \d \nh \p_z W_\e}\p_z W_\e}\nonumber\\
  &\quad+\e^2\oo{\zkh{(\nh \d v)\dk{(\nh \d V_\e)}-V_\e \d \dk{\nh(\nh \d v)}}\la_h W_\e}\nonumber\\
  &\quad+\e^2\oo{\zkh{\p_z V_\e \d \dk{\nh(\nh \d v)}+(\nh \d v)\p_z W_\e}\p_z W_\e}\nonumber\\
  &\quad+\e^2\oo{\zkh{V_\e \d \nh(\nh \d v)+(\nh \d \p_z v)W_\e}\p_z W_\e}\nonumber\\
  &\quad+\e^2\oo{\zkh{V_\e \d \nh W_\e-\p_z W_\e\dk{(\nh \d V_\e)}}\la_h W_\e}\nonumber\\
  &\quad+\e^2\oo{\zkh{\p_z V_\e \d \dk{\nh(\nh \d V_\e)}-2V_\e \d \nh \p_z W_\e}\p_z W_\e}\nonumber\\
  &\leq C\bigg\{\zkh{1+\e^2+\norm{v}^2_2+\norm{\nh v}^2_2+(1+\e^2)\norm{\n v}^2_2+(1+\e^2)\norm{\n\nh v}^2_2}\nonumber\\
  &\quad+\zkh{(1+\e^4)\norm{\n v}^4_2+\e^2\norm{\n^2\nh v}^2_2+\e^4\norm{\n^2 v}^2_2\norm{\n^2\nh v}^2_2}\nonumber\\
  &\quad+\zkh{\norm{v}^4_2+\norm{v}^2_2\norm{\nh v}^2_2+(1+\e^4)\norm{\n v}^2_2\norm{\n\nh v}^2_2}\nonumber\\
  &\quad+\xkh{\norm{V_\e}^2_2+\norm{\nh V_\e}^2_2+\e^2\norm{\nh W_\e}^2_2+\norm{V_\e}^4_2+\norm{V_\e}^2_2\norm{\nh V_\e}^2_2}\nonumber\\
  &\quad+\xkh{\norm{\n\nh V_\e}^2_2+\e^2\norm{\n\nh W_\e}^2_2}\bigg\}\times\xkh{\norm{\n V_\e}^2_2+\e^2\norm{\n W_\e}^2_2}\nonumber\\
  &\quad+C\e^2\xkh{\norm{v}^2_2+\norm{\nh v}^2_2+\norm{\n v}^2_2+\norm{\n\nh v}^2_2+\norm{\n^2\nh v}^2_2}\nonumber\\
  &\quad+C\e^2\xkh{\norm{v}^4_2+\norm{v}^2_2\norm{\nh v}^2_2+\norm{\n^2 v}^2_2\norm{\n^2\nh v}^2_2}\nonumber\\
  &\quad+C\xkh{\norm{V_\e}^4_2+\norm{V_\e}^2_2\norm{\nh V_\e}^2_2}+\frac{2}{35}\xkh{\norm{\n\nh V_\e}^2_2+\e^2\norm{\n\nh W_\e}^2_2},\label{fl:I2}
\end{flalign}and
\begin{flalign}
  \ma{I}_3:&=\oo{\zkh{(u \d \n)V_\e+(U_\e \d \n)v+(U_\e \d \n)V_\e} \d \la V_\e}\nonumber\\
  &=\oo{\zkh{(v \d \nh)V_\e-(\p_z V_\e)\dk{(\nh \d v)}} \d \la_h V_\e}\nonumber\\
  &\quad+\oo{\zkh{(-\p_z v \d \nh)V_\e-2(v \d \nh)\p_z V_\e} \d \p_z V_\e}\nonumber\\
  &\quad+\oo{\zkh{(V_\e \d \nh)v-(\p_z v)\dk{(\nh \d V_\e)}} \d \la_h V_\e}\nonumber\\
  &\quad+\oo{\zkh{(\nh \d V_\e)\p_z v-(\p_z V_\e \d \nh)v} \d \p_z V_\e}\nonumber\\
  &\quad+\oo{\zkh{(\p_{zz} v)\dk{(\nh \d V_\e)}-(V_\e \d \nh)\p_z v} \d \p_z V_\e}\nonumber\\
  &\quad+\oo{\zkh{(V_\e \d \nh)V_\e-(\p_z V_\e)\dk{(\nh \d V_\e)}} \d \la_h V_\e}\nonumber\\
  &\quad+\oo{\zkh{(\nh \d V_\e)\p_z V_\e-(\p_z V_\e \d \nh)V_\e} \d \p_z V_\e}\nonumber\\
  &\leq C\xkh{1+\norm{v}^2_2+\norm{\nh v}^2_2+\norm{\n v}^2_2+\norm{\n\nh v}^2_2+\norm{\n^2 v}^2_2}\norm{\n V_\e}^2_2\nonumber\\
  &\quad+C\xkh{\norm{v}^4_2+\norm{v}^2_2\norm{\nh v}^2_2+\norm{\n v}^4_2+\norm{\n v}^2_2\norm{\n\nh v}^2_2}\norm{\n V_\e}^2_2\nonumber\\
  &\quad+\xkh{\norm{\nh V_\e}^2_2+\norm{V_\e}^4_2+\norm{V_\e}^2_2\norm{\nh V_\e}^2_2+\norm{\n\nh V_\e}^2_2}\norm{\n V_\e}^2_2\nonumber\\
  &\quad+C\norm{V_\e}^2_2\xkh{1+\norm{\n\nh v}^2_2}+\frac{2}{35}\norm{\n\nh V_\e}^2_2,\label{fl:I3}
\end{flalign}respectively. For the integral term $\ma{I}_4$ on the right-hand side of (\ref{fl:Veg}), we apply the H\"{o}lder inequality and Young inequality to obtain
\begin{flalign}
  \ma{I}_4:&=\oo{\zkh{\e^2\xkh{\p_t w-\la_h w}\la W_\e-\e^{\ga-2}(\p_{zz}\r)\la\g_\e}}\nonumber\\
  &=\oo{\zkh{\e^2\xkh{\p_t w-\la_h w}\la_h W_\e-\e^{\ga-2}(\p_{zz}\r)\la_h\g_\e}}\nonumber\\
  &\quad+\oo{\zkh{\e^{\ga-2}(-\p_{zz}\r)\p_{zz}\g_\e+\e^2\xkh{\p_t w-\la_h w}\p_{zz} W_\e}}\nonumber\\
  &\leq \e^2\oo{\xkh{\dz{\xkh{|\nh^3 v|+|\nh\p_t v|}}}|\la_h W_\e|}\nonumber\\
  &\quad+\e^{\ga-2}\norm{\p_{zz}\r}_2\norm{\la_h\g_\e}_2+\e^{\ga-2}\norm{\p_{zz}\r}_2\norm{\p_{zz}\g_\e}_2\nonumber\\
  &\quad+\e^2\oo{\xkh{|\nh^3 v|+|\nh\p_t v|}|\p_z W_\e|}\nonumber\\
  &\leq C\e^2\oo{\xkh{\dz{\xkh{|\nh^3 v|+|\nh\p_t v|}^2}}}+C\e^2\norm{\p_z W_\e}^2_2\nonumber\\
  &\quad+C\xkh{\e^{\ga-2}+\e^{2\ga-4}}\norm{\p_{zz}\r}^2_2+C\e^2\xkh{\norm{\nh^3 v}^2_2+\norm{\nh\p_t v}^2_2}\nonumber\\
  &\quad+\frac{2}{35}\xkh{\e^2\norm{\la_h W_\e}^2_2+\norm{\la_h \g_\e}^2_2+\e^{\ga-2}\norm{\p_{zz}\g_\e}^2_2}\nonumber\\
  &\leq C\e^2\xkh{\norm{\n^2\nh v}^2_2+\norm{\n \p_t v}^2_2+\norm{\n W_\e}^2_2}+C\xkh{\e^{\ga-2}+\e^{2\ga-4}}\norm{\n^2 \r}^2_2\nonumber\\
  &\quad+\frac{2}{35}\xkh{\e^2\norm{\n\nh W_\e}^2_2+\norm{\n\nh \g_\e}^2_2+\e^{\ga-2}\norm{\n\p_z\g_\e}^2_2},\label{fl:I4}
\end{flalign}where the divergence-free condition is used. A similar argument as the integral term $\ma{I}_4$ on the right-hand side of (\ref{fl:Veg}) yields
\begin{flalign}
  \ma{I}_5:&=\oo{\zkh{\e^\be(\nh \d \p_z v)\la W_\e-\e^{\be-2}(\p_{zz}v)\d \la V_\e}}\nonumber\\
  &=\oo{\zkh{\e^\be(\nh \d \p_z v)\la_h W_\e-\e^{\be-2}(\p_{zz}v)\d\la_h V_\e}}\nonumber\\
  &\quad+\oo{\zkh{\e^\be(\nh \d \p_z v)\p_{zz} W_\e-\e^{\be-2}(\p_{zz}v)\d \p_{zz} V_\e}}\nonumber\\
  &\leq \e^\be\norm{\nh\p_z v}_2\norm{\la_h W_\e}_2+\e^{\be-2}\norm{\p_{zz}v}_2\norm{\la_h V_\e}_2\nonumber\\
  &\quad+\e^\be\norm{\nh\p_z v}_2\norm{\p_{zz} W_\e}_2+\e^{\be-2}\norm{\p_{zz}v}_2\norm{\p_{zz}V_\e}_2\nonumber\\
  &\leq C\xkh{\e^{\be-2}+\e^{2\be-4}}\norm{\n^2 v}^2_2+C\xkh{\e^\be+\e^{2\be-2}}\norm{\n\nh v}^2_2\nonumber\\
  &\quad+\frac{2}{35}\xkh{\norm{\n\nh V_\e}^2_2+\e^2\norm{\n\nh W_\e}^2_2+\e^\be\norm{\n\p_z W_\e}^2_2+\e^{\be-2}\norm{\n\p_z V_\e}^2_2}.\label{fl:I5}
\end{flalign}Substituting (\ref{fl:I1})-(\ref{fl:I5}) into (\ref{fl:Veg}) leads to
\begin{flalign*}
  \frac{1}{2}\frac{d}{dt}&\xkh{\norm{\n (V_\e,\e W_\e,\g_\e)}^2_2}
  +\frac{5}{7}\xkh{\norm{\n\nh V_\e}^2_2+\e^{\be-2}\norm{\n\p_z V_\e}^2_2+\e^2\norm{\n\nh W_\e}^2_2}\\
  &\quad+\frac{5}{7}\xkh{\norm{\n\nh \g_\e}^2_2+\e^\be\norm{\n\p_z W_\e}^2_2+\e^{\ga-2}\norm{\n\p_z \g_\e}^2_2}\\
  &\leq C_0\bigg\{\zkh{1+\e^2+\norm{v}^2_2+\norm{\nh v}^2_2+(1+\e^2)\norm{\n v}^2_2+(1+\e^2)\norm{\n\nh v}^2_2}\\
  &\quad+\zkh{\norm{\n^2 v}^2_2+(1+\e^4)\norm{\n v}^4_2+\e^2\norm{\n^2\nh v}^2_2+\e^4\norm{\n^2 v}^2_2\norm{\n^2\nh v}^2_2}\\
  &\quad+\zkh{\norm{v}^4_2+\norm{v}^2_2\norm{\nh v}^2_2+\norm{\n v}^2_2\norm{\nh v}^2_2+(1+\e^4)\norm{\n v}^2_2\norm{\n\nh v}^2_2}\\
  &\quad+\xkh{\norm{\nh \r}^2_2+\norm{\n \r}^2_2+\norm{\n \r}^4_2+\norm{\n^2 \r}^2_2+\norm{\n\nh \r}^2_2+\norm{\n \r}^2_2\norm{\n \nh \r}^2_2}\\
  &\quad+\xkh{\norm{V_\e}^2_2+\norm{\nh V_\e}^2_2+\e^2\norm{\nh W_\e}^2_2+\norm{V_\e}^4_2+\norm{V_\e}^2_2\norm{\nh V_\e}^2_2}\\
  &\quad+\xkh{\norm{\n\nh V_\e}^2_2+\e^2\norm{\n\nh W_\e}^2_2+\norm{\n\nh \g_\e}^2_2}\bigg\}\times\xkh{\norm{\n (V_\e,\e W_\e,\g_\e)}^2_2}\\
  &\quad+C_0\e^2\xkh{\norm{v}^2_2+\norm{\nh v}^2_2+\norm{\n v}^2_2+\norm{\n\nh v}^2_2+\norm{\n^2\nh v}^2_2}\\
  &\quad+C_0\e^2\xkh{\norm{v}^4_2+\norm{v}^2_2\norm{\nh v}^2_2+\norm{\n^2 v}^2_2\norm{\n^2\nh v}^2_2+\norm{\n \p_t v}^2_2}\\
  &\quad+C_0\xkh{\norm{V_\e}^4_2+\norm{V_\e}^2_2\norm{\nh V_\e}^2_2}+C_0\xkh{\e^{\ga-2}+\e^{2\ga-4}}\norm{\n^2 \r}^2_2\\
  &\quad+C_0\xkh{\e^{\be-2}+\e^{2\be-4}}\norm{\n^2 v}^2_2+C_0\xkh{\e^\be+\e^{2\be-2}}\norm{\n\nh v}^2_2\\
  &\quad+C_0\xkh{\norm{V_\e}^2_2+\norm{V_\e}^2_2\norm{\n\nh v}^2_2+\norm{V_\e}^2_2\norm{\n\nh \r}^2_2}.
\end{flalign*}Using the assumption given by the proposition
\begin{equation*}
  \sup_{0 \leq s \leq t}\xkh{\norm{\n (V_\e,\g_\e)}^2_2+\e^2\norm{\n W_\e}^2_2}(s) \leq \ell^2_0,
\end{equation*}and choosing $\ell_0=\sqrt{\frac{3}{14C_0}}$, it deduces from the above inequality that
\begin{flalign*}
  \frac{d}{dt}&\xkh{\norm{\n (V_\e,\e W_\e,\g_\e)}^2_2}+\xkh{\norm{\n\nh V_\e}^2_2+\e^{\be-2}\norm{\n\p_z V_\e}^2_2+\e^2\norm{\n\nh W_\e}^2_2}\\
  &\quad+\xkh{\norm{\n\nh \g_\e}^2_2+\e^\be\norm{\n\p_z W_\e}^2_2+\e^{\ga-2}\norm{\n\p_z \g_\e}^2_2}\\
  &\leq C_0\bigg\{\zkh{\norm{v}^2_2+\norm{\nh v}^2_2+(1+\e^2)\norm{\n v}^2_2+(1+\e^2)\norm{\n\nh v}^2_2}\\
  &\quad+\zkh{\e^2\norm{\n^2\nh v}^2_2+\e^4\norm{\n^2 v}^2_2\norm{\n^2\nh v}^2_2+(1+\e^4)\norm{\n v}^4_2}\\
  &\quad+\zkh{(1+\e^2)+\norm{v}^4_2+\norm{\n^2 v}^2_2+\norm{v}^2_2\norm{\nh v}^2_2+\norm{\n v}^2_2\norm{\nh v}^2_2}\\
  &\quad+\zkh{\norm{\nh \r}^2_2+\norm{\n \r}^2_2+\norm{\n \r}^4_2+(1+\e^4)\norm{\n v}^2_2\norm{\n\nh v}^2_2}\\
  &\quad+\xkh{\norm{\n^2 \r}^2_2+\norm{\n\nh \r}^2_2+\norm{\n \r}^2_2\norm{\n \nh \r}^2_2+\norm{V_\e}^2_2+\norm{\nh V_\e}^2_2}\\
  &\quad+\xkh{\e^2\norm{\nh W_\e}^2_2+\norm{V_\e}^4_2+\norm{V_\e}^2_2\norm{\nh V_\e}^2_2}\bigg\}\times\xkh{\norm{\n (V_\e,\e W_\e,\g_\e)}^2_2}\\
  &\quad+C_0\e^2\xkh{\norm{v}^2_2+\norm{\nh v}^2_2+\norm{\n v}^2_2+\norm{\n\nh v}^2_2+\norm{\n^2\nh v}^2_2}\\
  &\quad+C_0\e^2\xkh{\norm{v}^4_2+\norm{v}^2_2\norm{\nh v}^2_2+\norm{\n^2 v}^2_2\norm{\n^2\nh v}^2_2+\norm{\n \p_t v}^2_2}\\
  &\quad+C_0\xkh{\norm{V_\e}^4_2+\norm{V_\e}^2_2\norm{\nh V_\e}^2_2}+C_0\xkh{\e^{\ga-2}+\e^{2\ga-4}}\norm{\n^2 \r}^2_2\\
  &\quad+C_0\xkh{\e^{\be-2}+\e^{2\be-4}}\norm{\n^2 v}^2_2+C_0\xkh{\e^\be+\e^{2\be-2}}\norm{\n\nh v}^2_2\\
  &\quad+C_0\xkh{\norm{V_\e}^2_2+\norm{V_\e}^2_2\norm{\n\nh v}^2_2+\norm{V_\e}^2_2\norm{\n\nh \r}^2_2}.
\end{flalign*}Noting that the fact that $(V_\e,W_\e,\g_\e)|_{t=0}=0$, and applying the Gronwall inequality to the above inequality, it follows from Proposition \ref{po:J4t} and \ref{po:J5t} that
\begin{flalign*}
  &\xkh{\norm{\n (V_\e,\e W_\e,\g_\e)}^2_2}(t)+\ds{\xkh{\norm{\n\nh V_\e}^2_2+\e^{\be-2}\norm{\n\p_z V_\e}^2_2+\e^2\norm{\n\nh W_\e}^2_2}}\\
  &\quad\quad+\ds{\xkh{\norm{\n\nh \g_\e}^2_2+\e^\be\norm{\n\p_z W_\e}^2_2+\e^{\ga-2}\norm{\n\p_z \g_\e}^2_2}}\\
  &\quad\leq \exp\bigg\{C'\ds{\zkh{\norm{v}^2_2+\norm{\nh v}^2_2+(1+\e^2)\norm{\n v}^2_2+(1+\e^2)\norm{\n\nh v}^2_2}}\\
  &\quad\quad+C'\ds{\zkh{\e^2\norm{\n^2\nh v}^2_2+\e^4\norm{\n^2 v}^2_2\norm{\n^2\nh v}^2_2+(1+\e^4)\norm{\n v}^4_2}}\\
  &\quad\quad+C'\ds{\zkh{(1+\e^2)+\norm{v}^4_2+\norm{\n^2 v}^2_2+\norm{v}^2_2\norm{\nh v}^2_2+\norm{\n v}^2_2\norm{\nh v}^2_2}}\\
  &\quad\quad+C'\ds{\zkh{\norm{\nh \r}^2_2+\norm{\n \r}^2_2+\norm{\n \r}^4_2+(1+\e^4)\norm{\n v}^2_2\norm{\n\nh v}^2_2}}\\
  &\quad\quad+C'\ds{\xkh{\norm{\n^2 \r}^2_2+\norm{\n\nh \r}^2_2+\norm{\n \r}^2_2\norm{\n \nh \r}^2_2+\norm{V_\e}^2_2}}\\
  &\quad\quad+C'\ds{\xkh{\norm{\nh V_\e}^2_2+\e^2\norm{\nh W_\e}^2_2+\norm{V_\e}^4_2+\norm{V_\e}^2_2\norm{\nh V_\e}^2_2}}\bigg\}\\
  &\quad\quad\times\bigg\{C'\e^2\ds{\xkh{\norm{v}^2_2+\norm{\nh v}^2_2+\norm{\n v}^2_2+\norm{\n\nh v}^2_2+\norm{\n^2\nh v}^2_2}}\\
  &\quad\quad+C'\e^2\ds{\xkh{\norm{v}^4_2+\norm{v}^2_2\norm{\nh v}^2_2+\norm{\n^2 v}^2_2\norm{\n^2\nh v}^2_2+\norm{\n \p_t v}^2_2}}\\
  &\quad\quad+C'\ds{\xkh{\norm{V_\e}^4_2+\norm{V_\e}^2_2\norm{\nh V_\e}^2_2}}+C'\xkh{\e^{\ga-2}+\e^{2\ga-4}}\ds{\norm{\n^2 \r}^2_2}\\
  &\quad\quad+C'\xkh{\e^{\be-2}+\e^{2\be-4}}\ds{\norm{\n^2 v}^2_2}+C'\xkh{\e^\be+\e^{2\be-2}}\ds{\norm{\n\nh v}^2_2}\\
  &\quad\quad+C'\ds{\xkh{\norm{V_\e}^2_2+\norm{V_\e}^2_2\norm{\n\nh v}^2_2+\norm{V_\e}^2_2\norm{\n\nh \r}^2_2}}\bigg\}\\
  &\quad\leq C'\e^\eta(t+1)\zkh{\ma{J}_4(t)+\ma{J}^2_4(t)+\ma{J}_5(t)+\ma{J}^2_5(t)+\ma{J}_4(t)\ma{J}_5(t)}\\
  &\quad\quad\times\exp\dkh{C'(t+1)\zkh{\ma{J}_4(t)+\ma{J}^2_4(t)+\ma{J}_5(t)+\ma{J}^2_5(t)+1}},
\end{flalign*}where $\eta=\min\{2,\be-2,\ga-2\}$ with $2<\be,\ga<\infty$.~This completes the proof of Proposition \ref{po:J6t}.
\end{proof}

By finding a small positive constant to eliminate the effect of the smallness condition in Proposition \ref{po:J6t},~we establish the $H^1$ estimate on difference function $(V_\e,W_\e,\g_\e)$.
\begin{proposition}\label{po:J56}
Let $\ma{T}^*_\e$ be the maximal existence time of the strong solution $(v_\e,w_\e,\r_\e)$ to the system (\ref{eq:ptv}) corresponding to (\ref{ga:are})-(\ref{eq:eve}).~Then, for any finite time $\ma{T}>0$, there exists a small positive constant $\e(\ma{T})=\xkh{\frac{3\ell^2_0}{8\ma{J}_6(\ma{T})}}^{1/\eta}$ such that $\ma{T}^*_\e>\ma{T}$ provided that $\e \in $ $(0,\e(\ma{T}))$. Furthermore, the system (\ref{fl:Ve0})-(\ref{fl:zWe}) has the following energy estimate
\begin{flalign*}
  &\sup_{0 \leq t \leq \ma{T}}\xkh{\norm{(V_\e,\e W_\e,\g_\e)}^2_{H^1}}(t)+\int^\ma{T}_0{\xkh{\norm{\nh V_\e}^2_{H^1}+\e^{\be-2}\norm{\p_z V_\e}^2_{H^1}}}dt\\
  &\qquad\quad+\int^\ma{T}_0{\xkh{\e^2\norm{\nh W_\e}^2_{H^1}+\norm{\nh \g_\e}^2_{H^1}+\e^\be\norm{\p_z W_\e}^2_{H^1}+\e^{\ga-2}\norm{\p_z \g_\e}^2_{H^1}}}dt\\
  &\qquad\leq \e^\eta\xkh{\ma{J}_5(\ma{T})+\ma{J}_6(\ma{T})},
\end{flalign*}where $\eta=\min\{2,\be-2,\ga-2\}$ with $2<\be,\ga<\infty$. Here $\ma{J}_5(t)$ and $\ma{J}_6(t)$ are nonnegative continuously increasing functions that do not depend on $\e$.
\end{proposition}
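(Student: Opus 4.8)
The plan is to run a continuity (bootstrap) argument whose sole purpose is to discharge the smallness hypothesis of Proposition \ref{po:J6t} on a prescribed interval $[0,\ma{T}]$, and to show at the same time that the maximal existence time $\ma{T}^*_\e$ of the strong solution to (\ref{eq:ptv}) exceeds $\ma{T}$ once $\e$ is small. The mechanism is the quantitative slack between the a priori assumption $\ell^2_0$ and the conclusion of Proposition \ref{po:J6t}. Indeed, whenever the smallness condition holds on $[0,t]$ with $t\le\ma{T}$, Proposition \ref{po:J6t}, the elementary identity $\norm{\n(V_\e,\g_\e)}^2_2+\e^2\norm{\n W_\e}^2_2=\norm{\n(V_\e,\e W_\e,\g_\e)}^2_2$, and the monotonicity of $\ma{J}_6$ give
\begin{equation*}
  \sup_{0\le s\le t}\xkh{\norm{\n(V_\e,\g_\e)}^2_2+\e^2\norm{\n W_\e}^2_2}(s)\le\e^\eta\ma{J}_6(t)\le\e^\eta\ma{J}_6(\ma{T}).
\end{equation*}
Since $\e(\ma{T})=\xkh{\frac{3\ell^2_0}{8\ma{J}_6(\ma{T})}}^{1/\eta}$, for $\e\in(0,\e(\ma{T}))$ the right-hand side is strictly below $\frac{3}{8}\ell^2_0$; this strict improvement over $\ell^2_0$ is precisely the room needed to close the bootstrap.

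Concretely, I would introduce
\begin{equation*}
  t^{**}:=\sup\dkh{t\in[0,\min\dkh{\ma{T},\ma{T}^*_\e})\;:\;\sup_{0\le s\le t}\xkh{\norm{\n(V_\e,\g_\e)}^2_2+\e^2\norm{\n W_\e}^2_2}(s)\le\ell^2_0}.
\end{equation*}
Because $(V_\e,W_\e,\g_\e)|_{t=0}=0$ and the functions in play are time-continuous with values in $H^1(\O)$ (the scaled system admits a strong solution by Remark \ref{re:ws}, and $(v,\r)\in C([0,\ma{T}];H^1)$ by Proposition \ref{po:J4t}), the defining quantity vanishes at $t=0$, so $t^{**}>0$. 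On $[0,t^{**})$ the smallness holds, so the displayed improvement applies and the supremum is in fact bounded by $\frac{3}{8}\ell^2_0<\ell^2_0$. If one had $t^{**}<\min\dkh{\ma{T},\ma{T}^*_\e}$, then by continuity this strict bound would persist on a slightly larger interval, contradicting the definition of $t^{**}$ as a supremum; hence $t^{**}=\min\dkh{\ma{T},\ma{T}^*_\e}$.

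Next I would exclude $\ma{T}^*_\e\le\ma{T}$. In that case $t^{**}=\ma{T}^*_\e$, so the smallness condition holds on all of $[0,\ma{T}^*_\e)$, and combining the $L^2$-level bound of Proposition \ref{po:J5t} with the gradient-level bound of Proposition \ref{po:J6t} (letting $t\uparrow\ma{T}^*_\e$ and using monotonicity) yields
\begin{equation*}
  \sup_{0\le t<\ma{T}^*_\e}\norm{(V_\e,\e W_\e,\g_\e)}^2_{H^1}(t)\le\e^\eta\xkh{\ma{J}_5(\ma{T})+\ma{J}_6(\ma{T})}<\infty.
\end{equation*}
As $(v,w,\r)$ stays bounded in $H^1(\O)$ on $[0,\ma{T}]$ by Proposition \ref{po:J4t} (with $w$ recovered from $v$ through the divergence-free and symmetry conditions), the solution $(v_\e,w_\e,\r_\e)=(V_\e,W_\e,\g_\e)+(v,w,\r)$ has a uniformly bounded $H^1$ norm up to $\ma{T}^*_\e$. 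The continuation criterion for strong solutions of the scaled Boussinesq system (Remark \ref{re:ws}) then allows extension past $\ma{T}^*_\e$, contradicting maximality. Therefore $\ma{T}^*_\e>\ma{T}$, so $\min\dkh{\ma{T},\ma{T}^*_\e}=\ma{T}$ and $t^{**}=\ma{T}$.

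Finally, the smallness condition now holds on the whole of $[0,\ma{T}]$ (including the endpoint, by continuity), so Proposition \ref{po:J6t} applies on $[0,\ma{T}]$ with bound $\e^\eta\ma{J}_6(\ma{T})$, while Proposition \ref{po:J5t} supplies the matching $L^2$-level bound $\e^\eta\ma{J}_5(\ma{T})$. Adding the two estimates and using $\norm{\d}^2_{H^1}=\norm{\d}^2_2+\norm{\n\d}^2_2$ on both the supremum-in-time terms and the space-time dissipation integrals delivers the asserted inequality with constant $\e^\eta\xkh{\ma{J}_5(\ma{T})+\ma{J}_6(\ma{T})}$. The genuinely delicate point is the bootstrap: since Proposition \ref{po:J6t} is only conditional, the argument must exploit the gap between $\ell^2_0$ and $\frac{3}{8}\ell^2_0$ encoded in the precise form of $\e(\ma{T})$ to ensure that the a priori smallness is never saturated, and it must invoke the strong-solution continuation criterion to turn the uniform $H^1$ bound into existence beyond $\ma{T}$.
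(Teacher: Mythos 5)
Your proposal is correct and follows essentially the same route as the paper: the same bootstrap time (your $t^{**}$ is the paper's $t^\delta_\e$), the same use of the gap between $\e^\eta\ma{J}_6(\ma{T})\le\frac{3}{8}\ell^2_0$ and $\ell^2_0$ to close the continuity argument, the same contradiction with the maximal-existence-time blow-up criterion to conclude $\ma{T}^*_\e>\ma{T}$, and the same final addition of the bounds from Propositions \ref{po:J5t} and \ref{po:J6t}. The only difference is presentational: where the paper asserts directly that $\limsup_{t\to(\ma{T}^*_\e)^-}\norm{\n(V_\e,\e W_\e,\g_\e)}_2=\infty$ would have to hold, you spell out the intermediate step that boundedness of $(v,w,\r)$ in $H^1$ (Proposition \ref{po:J4t}) converts the uniform bound on the difference into a uniform $H^1$ bound on $(v_\e,w_\e,\r_\e)$ itself, which is the justification the paper leaves implicit.
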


\begin{proof}[Proof.]
For any finite time~$\ma{T}>0$,~setting~$\ma{T}^\delta_\e=\min\{\ma{T}^*_\e,\ma{T}\}$,~then from Proposition~\ref{po:J5t}~it follows that
\begin{flalign}
  &\sup_{0 \leq t < \ma{T}^\delta_\e}\xkh{\norm{(V_\e,\e W_\e,\g_\e)}^2_2}(t)+\int^{\ma{T}^\delta_\e}_0{\xkh{\norm{\nh V_\e}^2_2+\e^{\be-2}\norm{\p_z V_\e}^2_2}}dt\nonumber\\
  &\quad+\int^{\ma{T}^\delta_\e}_0{\xkh{\e^2\norm{\nh W_\e}^2_2+\norm{\nh \g_\e}^2_2+\e^\be\norm{\p_z W_\e}^2_2+\e^{\ga-2}\norm{\p_z \g_\e}^2_2}}dt \leq \e^\eta\ma{J}_5(\ma{T}),\label{fl:J5T}
\end{flalign}where
\begin{equation*}
  \ma{J}_5(\ma{T})=C(\ma{T}+1)e^{C(\ma{T}+1)\xkh{\ma{J}_4(\ma{T})+\ma{J}^2_4(\ma{T})}}
  \zkh{\ma{J}_4(\ma{T})+\ma{J}^2_4(\ma{T})+\xkh{\norm{v_0}^2_2+\norm{w_0}^2_2+\norm{\r_0}^2_2}^2}.
\end{equation*}Here $\eta=\min\{2,\be-2,\ga-2\}$ with $2<\be,\ga<\infty$, and $C$ is a positive constant that does not depend on $\e$.

Let~$\ell_0$~be the constant from Proposition~\ref{po:J6t}.~Define
\begin{equation*}
  t^\delta_\e:=\sup\dkh{t \in (0,\ma{T}^\delta_\e)\bigg|\sup_{0 \leq s \leq t}\xkh{\norm{\n (V_\e,\e W_\e,\g_\e)}^2_2}(s) \leq \ell^2_0}.
\end{equation*}By virtue of Proposition~\ref{po:J6t},~the following estimate holds
\begin{flalign}
  &\sup_{0 \leq s \leq t}\xkh{\norm{\n(V_\e,\e W_\e,\g_\e)}^2_2}(s)+\ds{\xkh{\norm{\n\nh V_\e}^2_2+\e^{\be-2}\norm{\n\p_z V_\e}^2_2+\e^2\norm{\n\nh W_\e}^2_2}}\nonumber\\
  &\qquad+\ds{\xkh{\norm{\n\nh \g_\e}^2_2+\e^\be\norm{\n\p_z W_\e}^2_2+\e^{\ga-2}\norm{\n\p_z \g_\e}^2_2}}\leq \e^\eta\ma{J}_6(\ma{T}),\label{fl:J6T}
\end{flalign}for any $t \in [0,t^\delta_\e)$, where
\begin{flalign*}
  \ma{J}_6(\ma{T})&\leq C(\ma{T}+1)\zkh{\ma{J}_4(\ma{T})+\ma{J}^2_4(\ma{T})+\ma{J}_5(\ma{T})+\ma{J}^2_5(\ma{T})+\ma{J}_4(\ma{T})\ma{J}_5(\ma{T})}\\
  &\quad\times\exp\dkh{C(\ma{T}+1)\zkh{\ma{J}_4(\ma{T})+\ma{J}^2_4(\ma{T})+\ma{J}_5(\ma{T})+\ma{J}^2_5(\ma{T})+1}}.
\end{flalign*}

Choosing~$\e(\ma{T})=\xkh{\frac{3\ell^2_0}{8\ma{J}_6(\ma{T})}}^{1/\eta}$,~it deduces from~(\ref{fl:J6T})~that
\begin{flalign*}
  &\sup_{0 \leq s \leq t}\xkh{\norm{\n(V_\e,\e W_\e,\g_\e)}^2_2}(s)+\ds{\xkh{\norm{\n\nh V_\e}^2_2+\e^{\be-2}\norm{\n\p_z V_\e}^2_2+\e^2\norm{\n\nh W_\e}^2_2}}\\
  &\qquad+\ds{\xkh{\norm{\n\nh \g_\e}^2_2+\e^\be\norm{\n\p_z W_\e}^2_2+\e^{\ga-2}\norm{\n\p_z \g_\e}^2_2}}\leq \frac{3\ell^2_0}{8},
\end{flalign*}for any~$t \in [0,t^\delta_\e)$~and for any~$\e \in (0,\e(\ma{T}))$,~which gives
\begin{equation}\label{eq:320}
  \sup_{0 \leq t < t^\delta_\e}\xkh{\norm{\n(V_\e,\e W_\e,\g_\e)}^2_2}(t) \leq \frac{3\ell^2_0}{8} < \ell^2_0.
\end{equation}According to the definition of~$t^\delta_\e$,~(\ref{eq:320})~implies~that $t^\delta_\e=\ma{T}^\delta_\e$.~In consequence,~the estimate~(\ref{fl:J6T})~holds for~$t \in [0,\ma{T}^\delta_\e)$~and for any~$\e \in (0,\e(\ma{T}))$.

We claim that~$\ma{T}^*_\e>\ma{T}$ for any $\e \in (0,\e(\ma{T}))$.~If $\ma{T}^*_\e \leq \ma{T}$,~then it is obvious that
\begin{equation*}
  \limsup_{t \rightarrow (\ma{T}^*_\e)^-}\xkh{\norm{\n(V_\e,\e W_\e,\g_\e)}_2}=\infty.
\end{equation*}Otherwise,~the strong solution~$(v_\e,w_\e,\r_\e)$~to the system~(\ref{eq:ptv})~can be extended beyond the maximal existence time~$\ma{T}^*_\e$.~However,~the above result contradicts to~(\ref{fl:J6T}).~This contradiction leads to $\ma{T}^*_\e>\ma{T}$, and hence $\ma{T}^\delta_\e=\ma{T}$.~Moreover,~combining~(\ref{fl:J5T})~with~(\ref{fl:J6T}) yields the energy estimate in Proposition \ref{po:J56}. This completes the proof.
\end{proof}

Based on Proposition \ref{po:J56}, we give the proof of Theorem \ref{th:J7t}.

\begin{proof}[Proof of Theorem~\ref{th:J7t}.]
For any finite time $\ma{T}>0$, by virtue of Proposition \ref{po:J56}, there exists a small positive constant $\e(\ma{T})=\xkh{\frac{3\ell^2_0}{8\ma{J}_6(\ma{T})}}^{1/\eta}$ such that $\ma{T}^*_\e>\ma{T}$ provided that $\e \in (0,\e(\ma{T}))$, which implies that the system (\ref{eq:ptv}) with (\ref{ga:are})-(\ref{eq:eve}) has a unique strong solution $(v_\e,w_\e,\r_\e)$ on the time interval $[0,\ma{T}]$ as long as $\e \in (0,\e(\ma{T}))$. Let $\ma{J}_7(\ma{T})=\ma{J}_5(\ma{T})+\ma{J}_6(\ma{T})$, then the following estimate holds
\begin{flalign*}
  &\sup_{0 \leq t \leq \ma{T}}\xkh{\norm{(V_\e,\e W_\e,\g_\e)}^2_{H^1}}(t)+\int^\ma{T}_0{\xkh{\norm{\nh V_\e}^2_{H^1}+\e^{\be-2}\norm{\p_z V_\e}^2_{H^1}}}dt\\
  &\quad+\int^\ma{T}_0{\xkh{\e^2\norm{\nh W_\e}^2_{H^1}+\norm{\nh \g_\e}^2_{H^1}+\e^\be\norm{\p_z W_\e}^2_{H^1}+\e^{\ga-2}\norm{\p_z \g_\e}^2_{H^1}}}dt
  \leq \e^\eta\ma{J}_7(\ma{T}),
\end{flalign*}where $\eta=\min\{2,\be-2,\ga-2\}$ with $2<\be,\ga<\infty$, and $\ma{J}_7(t)$ is a nonnegative continuously increasing function that does not depend on $\e$.~Finally, it is clear that the strong convergences stated in Theorem \ref{th:J7t} are the direct consequences of the above estimate. The theorem is thus proved.
\end{proof}
\smallskip\noindent
{\bf Acknowledgment.~}\small
The work of X. Pu was supported in part by the National Natural Science Foundation of China (No. 11871172) and the Natural Science Foundation of Guangdong Province of China (No. 2019A1515012000). The work of W. Zhou was supported by the Innovation Research for the Postgraduates of Guangzhou University (No. 2021GDJC-D09).

\bigskip

\newpage


\begin{thebibliography}{99}

\bibitem{pa2001}
P.~Az\'{e}rad,~F.~Guill\'{e}n,~Mathematical justification of the hydrostatic approximation in the primitive equations of geophysical fluid dynamics,~\emph{SIAM J. Math. Anal.},~33~(2001)~847-859.

\bibitem{ba2013}
C.~Bardos,~M.C.~Lopes Filho,~D.~Niu,~H.J.~Nussenzveig Lopes,~E.S.~Titi,~Stability of two-dimensional viscous incompressible flows under three-dimensional perturbations and inviscid symmetry breaking,~\emph{SIAM J. Math. Anal.},~45~(2013)~1871-1885.

\bibitem{yb1999}
Y.~Brenier,~Homogeneous hydrostatic flows with convex velocity profiles,~\emph{Nonlinearity},~12~(1999)~495-512.

\bibitem{db2003}
D.~Bresch,~F.~Guill\'{e}n-Gonz\'{a}lez,~N.~Masmoudi,~M.A.~Rodr\'{\i}guez-Bellido,~On the uniqueness of weak solutions of the two-dimensional primitive equations,~\emph{Differ. Integral Equ.},~16~(2003)~77-94.

\bibitem{cc2015}
C.~Cao,~S.~Ibrahim,~K.~Nakanishi,~E.S.~Titi,~Finite-time blowup for the inviscid primitive equations of oceanic and atmospheric dynamics,~\emph{Commun. Math. Phys.},~337~(2015)~473-482.

\bibitem{cc2014}
C.~Cao,~J.~Li,~E.S.~Titi,~Local and global well-posedness of strong solutions to the 3D primitive equations with vertical eddy diffusivity,~\emph{Arch. Ration. Mech. Anal.},~214~(2014) 35-76.

\bibitem{jl2014}
C.~Cao,~J.~Li,~E.S.~Titi,~Global well-posedness of strong solutions to the 3D primitive equations with horizontal eddy diffusivity,~\emph{J. Differ. Equ.},~257~(2014)~4108-4132.

\bibitem{es2016}C.~Cao,~J.~Li,~E.S.~Titi,~Global well-posedness of the 3D primitive equations with only horizontal viscosity and diffusivity,~\emph{Commun. Pure Appl. Math.},~69~(2016)~1492-1531.

\bibitem{cc2017}
C.~Cao,~J.~Li,~E.S.~Titi,~Strong solutions to the 3D primitive equations with horizontal dissipation:~near~$H^1$~initial data,~\emph{J. Funct. Anal.},~272~(2017)~4606-4641.

\bibitem{es2020}
C.~Cao,~J.~Li,~E.S.~Titi,~Global well-posedness of the 3D primitive equations with horizontal viscosity and vertical diffusivity,~\emph{Phys. D},~412~(2020)~132606,~25~pp.

\bibitem{ct2007}
C.~Cao,~E.S.~Titi,~Global well-posedness of the three-dimensional viscous primitive equations of large scale ocean and atmosphere dynamics,~\emph{Ann. Math.},~166~(2007)~245-267.

\bibitem{ct2012}
C.~Cao,~E.S.~Titi,~Global well-posedness of the 3D primitive equations with partial vertical turbulence mixing heat diffusion,~\emph{Commun. Math. Phys.},~310~(2012)~537-568.

\bibitem{ct2003}
C.~Cao,~E.S.~Titi,~Global well-posedness and finite-dimensional global attractor for a 3-D planetary geostrophic viscous model,~\emph{Commun.~Pure Appl.~Math.},~56~(2003)~198-233.

\bibitem{dy2020}
D.~Fang,~B.~Han,~Global well-posedness for the 3D primitive equations in anisotropic framework,~\emph{J. Math. Anal. Appl.},~484~(2020),~123714,~22~pp.

\bibitem{kf2020}
K.~Furukawa,~Y.~Giga,~M.~Hieber,~A.~Hussein,~T.~Kashiwabara,~M.~Wrona,~Rigorous justification of the hydrostatic approximation for the primitive equations by scaled Navier-Stokes equations,~\emph{Nonlinearity},~33~(2020)~6502-6516.

\bibitem{ym2020}
Y.~Giga,~M.~Gries,~M.~Hieber,~A.~Hussein,~T.~Kashiwabara,~The hydrostatic Stokes semigroup and well-posedness of the primitive equations on spaces of bounded functions,~\emph{J. Funct. Anal.},~279~(2020),~108561,~46 pp.

\bibitem{te2022}
T.-E.~Ghoul,~S.~Ibrahim,~Q.~Lin,~E.S.~Titi,~On the effect of rotation on the life-span of analytic solutions to the 3D inviscid primitive equations,~\emph{Arch. Ration. Mech. Anal.},~243~(2022)~747-806.

\bibitem{dh2016}
D.~Han-Kwan,~T.~Nguyen,~Ill-posedness of the hydrostatic Euler and singular Vlasov equations,~\emph{Arch. Ration. Mech. Anal.},~221~(2016)~1317-1344.

\bibitem{ah2016}
M.~Hieber,~A.~Hussein,~T.~Kashiwabara,~Global strong~$L^p$~well-posedness of the 3D primitive equations with heat and salinity diffusion,~\emph{J. Differ. Equ.},~261~(2016)~6950-6981.

\bibitem{mh2016}
M.~Hieber,~T.~Kashiwabara,~Global strong well-posedness of the three dimensional primitive equations in~$L^p$-spaces,~\emph{Arch. Ration. Mech. Anal.},~221~(2016)~1077-1115.

\bibitem{si2021}
S.~Ibrahim,~Q.~Lin,~E.S,~Titi,~Finite-time blowup and ill-posedness in Sobolev spaces of the inviscid primitive equations with rotation,~\emph{J. Differ. Equ.},~286~(2021)~557-577.

\bibitem{ju2017}
N.~Ju,~On~$H^2$~solutions and~$z$-weak solutions of the 3D primitive equations,~\emph{Indiana Univ. Math. J.},~66~(2017)~973-996.


\bibitem{ku2016}
I.~Kukavica,~M.C.~Lombardo,~M.~Sammartino,~Zero viscosity limit for analytic solutions of the primitive equations,~\emph{Arch. Ration. Mech. Anal.},~222~(2016)~15-45.

\bibitem{nm2014}
I.~Kukavica,~N.~Masmoudi,~V.C.~Vicol,~T.K.~Wong,~On the local well-posedness of the Prandtl and the hydrostatic Euler equations with multiple monotonicity regions, \emph{SIAM J. Math. Anal.},~46~(2014)~3865-3890.

\bibitem{ik2014}
I.~Kukavica,~Y.~Pei,~W.~Rusin,~M.~Ziane,~Primitive equations with continuous initial data,~\emph{Nonlinearity},~27~(2014)~1135-1155.

\bibitem{ku2011}
I.~Kukavica,~R.~Temam,~V.C.~Vicol,~M.~Ziane,~Local existence and uniqueness for the hydrostatic Euler equations on a bounded domain,~\emph{J. Differ. Equ.},~250 ~(2011)~1719-1746.

\bibitem{ik2007}
I.~Kukavica,~M.~Ziane,~The regularity of solutions of the primitive equations of the ocean in space dimension three,~\emph{C. R. Math. Acad. Sci. Paris},~345~(2007)~257-260.

\bibitem{mz2007}
I.~Kukavica, M.~Ziane,~On the regularity of the primitive equations of the ocean,~\emph{Nonlinearity},~20~(2007)~2739-2753.

\bibitem{jl1992}
J.L.~Lions,~R.~Temam,~S.~Wang,~New formulations of the primitive equations of atmosphere and applications,~\emph{Nonlinearity},~5~(1992)~237-288.

\bibitem{rt1992}
J.L.~Lions,~R.~Temam,~S.~Wang,~On the equations of the large scale ocean,~\emph{Nonlinearity},~5~(1992)~1007-1053.

\bibitem{sw1995}
J.L.~Lions,~R.~Temam,~S.~Wang,~Mathematical theory for the coupled atmosphere-ocean models,~\emph{J. Math. Pures Appl.},~74~(1995)~105-163.

\bibitem{lt2019}
J.~Li,~E.S.~Titi,~The primitive equations as the small aspect ratio limit of the Navier-Stokes equations:~rigorous justification of the hydrostatic approximation,~\emph{J. Math. Pures~Appl.},~124~(2019)~30-58.

\bibitem{jl2017}
J.~Li,~E.S.~Titi,~Existence and uniqueness of weak solutions to viscous primitive equations for a certain class of discontinuous initial data,~\emph{SIAM J. Math. Anal.},~49~(2017)~1-28.

\bibitem{yu2022}
J.~Li,~E.S.~Titi,~G.~Yuan.~The primitive equations approximation of the anisotropic horizontally viscous Navier-Stokes equations,~\emph{J. Differ. Equ.},~306~(2022)~492-524.

\bibitem{li2022}
J.~Li,~G.~Yuan.~Global well-posedness of $z$-weak solutions to the primitive equations without vertical diffusivity,~\emph{J. Math. Phys.},~63~(2022),~24 pp.

\bibitem{am2003}
A.~Majda,~Introduction to PDEs and Waves for the Atmosphere and Ocean,~American Mathematical Society,~Providence,~RI,~2003.

\bibitem{nt2012}
N.~Masmoudi,~T.K.~Wong,~On the $H^s$ theory of hydrostatic Euler equations,~\emph{Arch. Ration. Mech. Anal.},~204~(2012)~231-271.

\bibitem{jp1987}
J.~Pedlosky,~Geophysical Fluid Dynamics,~second edition,~Springer,~New York,~1987.

\bibitem{pz2021}
X.~Pu,~W.~Zhou,~Rigorous derivation of the full primitive equations by scaled Boussinesq equations,~arXiv:~2105.10621.

\bibitem{wz2021}
X.~Pu,~W.~Zhou,~On the rigorous mathematical derivation for the viscous primitive equations with density stratification,~arXiv:~2203.10529.

\bibitem{mr2009}
M.~Renardy,~Ill-posedness of the hydrostatic Euler and Navier-Stokes equations,~\emph{Arch. Ration. Mech. Anal.},~194~(2009)~877-886.

\bibitem{jc2016}
J.C.~Robinson,~J.L.~Rodrigo,~W.~Sadowski,~The Three-Dimensional Navier-Stokes Equations:~Classical Theory,~Cambridge University Press,~Cambridge,~2016.

\bibitem{ds1996}
D.~Seidov,~An intermediate model for large-scale ocean circulation studies,~\emph{Dynam. Atmos. Oceans},~25~(1996)~25-55.

\bibitem{tt2010}
T.~Tachim Medjo,~On the uniqueness of $z$-weak solutions of the three-dimensional primitive equations of the ocean,~\emph{Nonlinear Anal. Real World Appl.},~11~(2010)~1413-1421.

\bibitem{rt1977}
R.~Temam,~Navier Stokes Equations:~Theory and Numerical Analysis,~North-Holland Publishing Co.,~Amsterdam~New York~Oxford,~1977.

\bibitem{gk2006}
G.K.~Vallis,~Atmospheric and Oceanic Fluid Dynamics,~Cambridge University Press,~Cambridge,~2006.

\bibitem{wm1986}
W.M.~Washington,~C.L.~Parkinson,~An Introduction to Three Dimensional Climate Modeling,~Oxford University Press,~Oxford,~1986.

\bibitem{tk2015}
T.K.~Wong,~Blowup of solutions of the hydrostatic Euler equations,~\emph{Proc. Amer. Math. Soc.},~143~(2015)~1119-1125.

\end{thebibliography}
\end{document}